\definecolor{Green}{rgb}{0,0.922,0}
\definecolor{DarkGreen}{rgb}{0,0.5,0}
\definecolor{MildGreen}{rgb}{0,0.863,0}
\definecolor{NormalGreen}{rgb}{0,0.8,0}
\definecolor{Pink}{rgb}{1,0,1}
\definecolor{Cyan}{rgb}{0,1,1}
\definecolor{Yellow}{rgb}{1,1,0}
\definecolor{Gold}{rgb}{1,0.73,0}
\definecolor{lavender}{rgb}{0.45,0,1}
\DeclareFontFamily{U}{mathx}{}
\DeclareFontShape{U}{mathx}{m}{n}{<-> mathx10}{}
\DeclareSymbolFont{mathx}{U}{mathx}{m}{n}
\DeclareMathAccent{\widecheck}{0}{mathx}{"71}
\crefname{conjecture}{Conjecture}{Conjectures}
\newtheorem{theorem}{Theorem}[section]
\newtheorem{proposition}[theorem]{Proposition}
\newtheorem{conjecture}[theorem]{Conjecture}
\newtheorem{lemma}[theorem]{Lemma}
\theoremstyle{definition}
\newtheorem{definition}[theorem]{Definition}
\newtheorem{remark}[theorem]{Remark}
\newtheorem{axiom}[theorem]{Axiom}
\newlist{lemenum}{enumerate}{1}
\setlist[lemenum]{label=(\roman*),ref=Lemma~\thelemma&\roman*}
\newcommand{\abs}[1]{\left\lvert#1\right\rvert}
\newcommand{\dfn}[1]{\textcolor{blue}{\emph{#1}}}
\newcommand{\N}{\mathbb{N}}
\newcommand{\1}{\mathbbm{1}}
\newcommand{\E}{\mathbb{E}} 
\newcommand{\TT}{T}
\newcommand{\Z}{\mathbb{Z}}
\newcommand{\Q}{\mathbb{Q}}
\newcommand{\R}{\mathbb{R}}
\newcommand{\C}{\mathbb{C}}
\DeclareMathOperator{\interro}{\textnormal{\textinterrobang}}
\DeclareMathOperator{\?}{?}
\DeclareMathOperator{\Cay}{Cay}
\DeclareMathOperator{\idp}{dp_\infty}
\newcommand{\Des}{\mathrm{Des}}
\newcommand{\HH}{\mathbb{H}}
\newcommand{\DD}{\mathbb{D}}
\DeclareMathOperator{\height}{ht}
\begin{document}

\title[]{Reduced Random Walks in the Hyperbolic Plane\textinterrobang}

\author[]{Colin Defant}
\address[]{Department of Mathematics, Harvard University, Cambridge, MA 02138, USA}
\email{colindefant@gmail.com}

\author[]{Mitchell Lee}
\address[]{Department of Mathematics, Harvard University, Cambridge, MA 02138, USA}
\email{mitchell@math.harvard.edu}

\begin{abstract} 
We study Lam's reduced random walk in a hyperbolic triangle group, which we view as a random walk in the upper half-plane. We prove that this walk converges almost surely to a point on the extended real line. We devote special attention to the reduced random walk in $PGL_2(\mathbb{Z})$ (i.e., the $(2,3,\infty)$ triangle group). In this case, we provide an explicit formula for the cumulative distribution function of the limit. This formula is written in terms of the \emph{interrobang function}, a new function $\textnormal{\textinterrobang}\colon[0,1]\to\mathbb{R}$ that shares several of the remarkable analytic and arithmetic properties of Minkowski's question-mark function. 
\end{abstract} 

\maketitle

\section{Introduction}\label{sec:intro}

\begin{figure}
    \begin{center}
        \includegraphics[width=
        \linewidth]{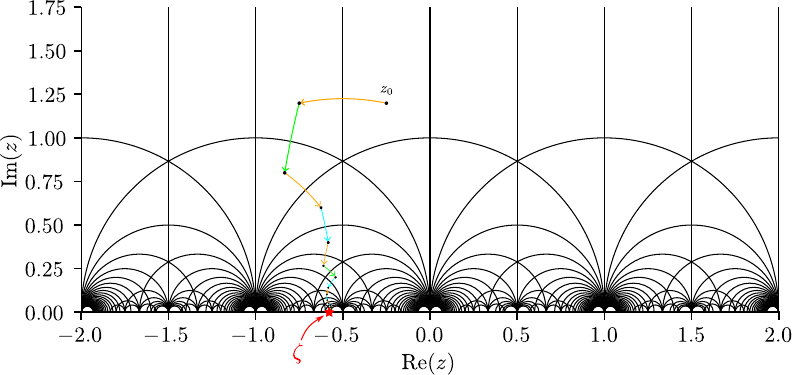}
    \end{center}
    \caption{A trajectory of the reduced random walk in $PGL_2(\Z)$. The limit point $\zeta\in\overline{\R}$ is labeled. }\label{fig:trajectory} 
\end{figure}

\subsection{Random walks on hyperbolic triangle groups}\label{subsec:hyperbolic-reduced-intro}
Let $\Delta$ be a triangle in the hyperbolic plane $\HH$, possibly with ideal vertices. Assume that the angles of $\Delta$ have measures $\pi/p$, $\pi/q$, and $\pi/r$, where $p, q, r \in \{2, 3, 4, \ldots\} \cup \{\infty\}$. Let $s_1, s_2, s_3 \colon \HH \to \HH$ be the reflections in the sides of $\Delta$. The isometries $s_1, s_2, s_3$ generate a hyperbolic group $W$ called the $(p, q, r)$ triangle group, and it is a classical result that the triangles $w(\Delta)$ for $w \in W$ are distinct and form a tiling of the hyperbolic plane \cite[Chapter~II]{MR352287}. We refer to these triangles as \dfn{alcoves}, and we say that two alcoves are \dfn{adjacent} if they share an edge. We will generalize this setup in \cref{sec:preliminaries}.

The \dfn{simple random walk on $W$} starts at the fundamental alcove $\Delta$. At each step, it moves to an alcove chosen uniformly at random from the alcoves adjacent to the current one. Equivalently, the simple random walk on $W$ is the right random walk on $W$ driven by the uniform measure on $\{s_1, s_2, s_3\}$.


Random walks on hyperbolic groups such as $W$ have been studied for at least a century and have continued to attract considerable interest \cite{MR2302729,MR3849626}. For a comprehensive treatment of the subject, see the 2023 book by Lalley \cite{Lalley}.

Minkowski's \emph{question-mark function} $\? \colon [0, 1] \to \R$ has unexpectedly surfaced in the study of random walks on hyperbolic groups. In 2017, Letac and Piccioni studied a particular random walk on the hyperbolic group $PSL_2(\Z)$. They proved that it almost surely converges to a point on the boundary of the hyperbolic plane and found an explicit formula for the cumulative distribution function of the limit in terms of Minkowski's question-mark function \cite{MR3866617}.

In this article, we consider instead the \dfn{(lazy) reduced random walk on $W$}, which is the following variation on the simple random walk on $W$ introduced by Lam in 2015 \cite{Lam}. Again, we start at the fundamental alcove $\Delta$. At each step, we again choose an alcove uniformly at random from the set of alcoves adjacent to the current one. However, in this model, we are not allowed to cross a line more than once. Thus, if moving to the new alcove would cause us to cross a line that we have already crossed, then we stay still at the current alcove; otherwise, we move to the new adjacent alcove.

In this article, we will be primarily concerned with the limiting behavior of the reduced random walk on $W$.

\subsection{Previous research on reduced random walks}

The reduced random walk was originally defined more generally for any Coxeter group $W$. We will describe the generalization in \cref{sec:coxeter}.

For example, suppose that $W$ is the Coxeter group $\mathfrak S_{n+1}$, where the simple reflection $s_i$ ($1 \leq i \leq n$) is the transposition that swaps $i$ and $i+1$. In this case, the transitions in the reduced random walk have a simple combinatorial interpretation in terms of permutations. At each discrete time step, we choose an integer $i\in\{1,\ldots,n\}$ uniformly at random, and we put the entries of the permutation in positions $i$ and $i+1$ into decreasing order (if they are already in decreasing order, we do nothing). This model is a discrete-time analogue of the \emph{oriented swap process}, which was introduced by Angel, Holroyd, and Romik in 2009 \cite{AHR} and which has seen considerable recent interest \cite{BCFGR,BGR,LingfuZhang}. With probability $1$, the reduced random walk eventually reaches the decreasing permutation, which is the unique absorbing state.

Lam studied the reduced random walk when $W$ is an irreducible affine Weyl group \cite{Lam}. In this setting, he proved that the limit of the reduced random walk has a finitely supported probability distribution on the boundary of the Tits cone. Lam found that this distribution is determined by an associated finite-state Markov chain. When $W$ is the affine symmetric group $\widetilde{\mathfrak S}_n$, this finite-state Markov chain coincides with an instance of the \emph{multispecies totally asymmetric simple exclusion process} (\emph{multispecies TASEP}) on a cycle, which had already been studied extensively before Lam's work \cite{EFM,FM,Spitzer} (see also the more recent works \cite{CMW,BL,BL2,BL3}). Lam's seminal article has spawned a great deal of subsequent research \cite{AALP,AL,DefantStoned,LW}.  

\subsection{Results} 

Let us now return to the setting introduced at the beginning of this article, where $W$ is a hyperbolic triangle group. The reduced random walk in $W$ can be viewed geometrically as a random walk $(z_m)_{m\geq 0}$ in the Poincar\'e upper half-plane $\HH$. We will prove (\cref{thm:convergence}) that the sequence $(z_m)_{m\geq 0}$ almost surely converges to a point $\zeta$ on the extended real line $\overline{\R}$. We will also prove (\cref{thm:key,thm:uniqueness}) that the probability distribution of $\zeta$ is uniquely determined by a simple equation.

We will then consider the special case $W = PGL_2(\Z)$, which is a particularly notable example due to its myriad appearances in number theory. (See \cref{fig:trajectory,fig:pgl2}.) We provide an explicit description of the limiting distribution of the reduced random walk via its cumulative distribution function $F_\zeta(x)=\Pr[\zeta\leq x]$ (\cref{thm:pgl2-cumulative}). We find that $F_\zeta$ is continuous and satisfies $\lim_{x\to\infty}F_\zeta(x)=1$, which implies that the limiting distribution of $\zeta$ is atomless. Our description uses an auxiliary function $\interro\colon[0,1]\to\R$ that we call the \emph{interrobang function}. This function, which we initially define on $[0,1]\cap\Q$ via a recurrence relation before extending its domain to all of $[0,1]$, shares many notable analytic and arithmetic properties with Minkowski's question-mark function $\?\colon[0,1]\to\R$ (see \cref{fig:interrobang_plot}). For example, it is strictly increasing and uniformly continuous (\cref{thm:interro-real}), it sends rational numbers to dyadic rational numbers (\cref{prop:rational_to_dyadic}), and it sends quadratic irrational numbers to rational numbers (\cref{thm:quadratic_to_rational}). As a consequence, we find that the cumulative distribution function $F_\zeta$ sends rational numbers to dyadic rational numbers and sends quadratic irrational numbers to rational numbers (\cref{thm:pgl2-cumulative}). 

We also conjecture some additional properties of the interrobang function. On the analytic side, we conjecture that $\interro$ is differentiable and has derivative $0$ almost everywhere (\cref{conj:differentiable}). On the arithmetic side, we conjecture that there exists a dyadic rational number $y \in [0, 3/8]$ such that $\interro^{-1}(y)$ is irrational (\cref{conj:irrational}). In fact, we conjecture that $\interro^{-1}(1/8)$ is transcendental (\cref{conj:transcendental}).

\subsection{Outline} In \cref{sec:coxeter}, we introduce Coxeter groups, the Demazure product, and the reduced random walk. In \cref{sec:preliminaries}, we provide additional background information on hyperbolic triangle groups. In \cref{subsec:oneway}, we prove that the sequence $(z_m)_{m\geq 0}$ geometrically realizing the reduced random walk converges almost surely to a point $\zeta\in\overline{\R}$. In \cref{sec:uniqueness}, we prove that the probability distribution of $\zeta$ is the unique solution to a simple distributional equation. \cref{sec:pgl2} briefly sets the stage for the rest of the paper by discussing how our general setup specializes when $W$ is $PGL_2(\Z)$. In \cref{sec:interro}, we introduce the interrobang function and establish some of its properties. Finally, \cref{sec:pgl2-limit} is devoted to proving an explicit formula for the cumulative distribution function $F_\zeta$ in $PGL_2(\Z)$. 

\section{The reduced random walk on a Coxeter group}\label{sec:coxeter}
For a more complete introduction to the theory of Coxeter groups, see the 2005 textbook of Bj\"orner and Brenti \cite{MR2133266}.

Let $I$ be a finite index set. A \dfn{Coxeter matrix} is a function $m \colon I \times I \to \N \cup \{\infty\}$ such that $m(i, i) = 1$ for all $i \in I$, and $m(i, i') = m(i', i) > 1$ for all distinct $i, i' \in I$. A \dfn{Coxeter system} with the Coxeter matrix $m$ is a pair $(W, S)$, where $W$ is a group, $S = \{s_i \, | \, i \in I\} \subseteq W$ is a generating set, and $W$ has the presentation \[\langle S \, | \, ((s_i s_{i'})^{m(i, i')})_{i, i' \in I}\rangle.\] We refer to $W$ as a \dfn{Coxeter group}, and we refer to the elements of $S$ as \dfn{simple reflections}. It is well known that the simple reflections $s_i$ are distinct and have order $2$.

The \dfn{length} of an element $w\in W$, denoted $\ell(w)$, is the minimum integer $\ell$ such that $w$ can be written in the form $s_{i_1} \cdots s_{i_\ell}$ for some $i_1, \ldots, i_\ell \in I$. For $i \in I$ and $w \in W$, we say that $i$ is a \dfn{left descent} of $w$ if $\ell(s_iw) < \ell(w)$. We say that $i$ is a \dfn{right descent} of $w$ if $\ell(w s_i) < \ell(w)$. The set of left descents of $w$ is denoted by $\Des_L(w)$, and the set of right descents of $w$ is denoted by $\Des_R(w)$.

There is a unique associative binary operation $\star \colon W \times W \to W$ such that for all $i \in I$ and all $w \in W$, we have \[\1 \star w = w \star \1 = w\] and
\begin{equation}\label{eq:demazure-simple-left}
s_i \star w = \begin{cases}
    w & \mbox{if $i \in \Des_L(w)$} \\
    s_i w & \mbox{otherwise}
\end{cases}.
\end{equation}
The same binary operation $\star$ also satisfies the dual equation
\begin{equation}\label{eq:demazure-simple-right}
w \star s_i = \begin{cases}
    w & \mbox{if $i \in \Des_R(w)$} \\
    w s_i & \mbox{otherwise}
\end{cases}.
\end{equation}
We refer to $\star$ as the \dfn{Demazure product}, although it is sometimes called the \dfn{greedy product} or \dfn{$0$-Hecke product}. The Demazure product cannot decrease length; that is, $\ell(u \star w) \geq \max(\ell(u), \ell(w))$ for all $u, w \in W$.

Let $i_1,i_2,i_3,\ldots$ be a sequence of indices chosen independently and uniformly at random from $I$. The \dfn{(lazy) reduced random walk} \cite{Lam} is the sequence of random variables $(u_m)_{m \geq 0}$, where \[u_m = s_{i_1} \star \cdots \star s_{i_m}.\] In other words, the reduced random walk is the (right) random walk on $W$ considered as a monoid with the Demazure product, where the step distribution is uniform on $S$. Thus, the reduced random walk fits into the more general field of random walks on semigroups \cite{FT,GK,HognasMukherjea}. 

By construction, the reduced random walk is a Markov chain on $W$. Unlike the usual simple random walk in $\Cay(W,S)$, the reduced random walk is nondecreasing in length; that is, $\ell(u_m) \leq \ell(u_{m + 1})$ for all $m \geq 0$, and equality holds if and only if $u_m = u_{m + 1}$.

\section{Hyperbolic triangle groups}\label{sec:preliminaries}  
The material in this section can also be found in any reference work on hyperbolic geometry. For a more complete introduction, see the 2005 textbook of Anderson~\cite{MR2161463}.

\subsection{The hyperbolic plane}\label{subsec:hyperbolic}
Let \[\HH = \{z \in \C \, | \, \operatorname{Im}(z) > 0\} \subseteq \C\] denote the upper half-plane. Considering each element $z \in \HH$ as $z = x + iy$, where $x, y \in \R$ and $y > 0$, we may equip $\HH$ with the conformal Riemannian metric \[ds^2 = \frac{dx^2 + dy^2}{y^2}.\] It is well known that with this metric, $\HH$ is a surface with constant negative Gaussian curvature; we refer to $\HH$ as the \dfn{Poincar\'e half-plane model of the hyperbolic plane} or simply the \dfn{hyperbolic plane}. (There are many other ways of constructing the hyperbolic plane, including the \emph{Poincar\'e disk model}, which we will use in \cref{sec:uniqueness}.)

Denote the one-point compactification of $\R$ by $\overline{\R} = \R \cup \{\infty\}$, which is homeomorphic to a circle. 
Denote the one-point compactification of $\HH \cup \R \subseteq \C$ by $\overline{\HH} = \HH \cup \R \cup \{\infty\} = \HH \cup \overline{\R}$, which is homeomorphic to a closed disk. 

A \dfn{(hyperbolic) line} in $\HH$ is a subset of the form \[\{z \in \HH \, | \,\operatorname{Re}(z) = x\}\] or \[\{z \in \HH \, | \, |z - x| = r\},\] where $x, r \in \R$ with $r > 0$. Every geodesic in $\HH$ is contained in a hyperbolic line.

The \dfn{ideal points} of a line $L \subseteq \HH$ are the elements of $\overline{L} \cap \overline{\R}$, where $\overline{L}$ denotes the topological closure of $L$ in $\overline{\HH}$. Every line has two ideal points, and every two distinct elements of $\overline{\R}$ are the ideal points of a unique line.
%

For each hyperbolic line $L \subseteq \HH$, there is a unique anticonformal isometry $t_L \colon \HH \to \HH$ that fixes $L$. If $L = \{z \in \HH \, | \,\operatorname{Re}(z) = x\}$ for some $x \in \R$, then $t_L$ is given by \[t_L(z) = 2x - \overline{z}.\] On the other hand, if $L = \{z \in \HH \, | \, |z - x| = r\}$ for some $x, r \in \R$ with $r > 0$, then $t_L$ is given by \[t_L(z) = \frac{r^2}{\overline{z} - x}+ x.\] We refer to $t_L$ as the \dfn{reflection} in the line $L$; it extends uniquely to a continuous function $t_L \colon \overline{\HH} \to \overline{\HH}$.

\subsection{Hyperbolic triangle groups}\label{subsec:triangle}
Fix lines $L_1, L_2, L_3 \subseteq \HH$ and a point $z_0 \in \HH$. Throughout the rest of this article, we make the following assumption.
\begin{axiom}\label{ax:angles}
For all distinct $i, i' \in \{1, 2, 3\}$, one of the following two conditions holds.
\begin{itemize}
    \item The lines $L_i$ and $L_{i'}$ do not intersect in $\HH$. Moreover, $z_0$ lies between them; that is, $z_0$ is in the connected component of $\HH \setminus (L_i \cup L_{i'})$ whose boundary contains both $L_i$ and $L_{i'}$.
    \item The lines $L_i$ and $L_{i'}$ intersect at an angle with measure $\pi / m(i, i')$, where $m(i, i') \geq 2$ is an integer. This angle contains the point $z_0$ in its interior.
\end{itemize}
\end{axiom}
Define \[\Delta = \{z \in \HH \setminus (L_1 \cup L_2 \cup L_3) \, | \, \text{$z$ and $z_0$ are on the same side of $L_i$ for $i = 1, 2, 3$}\}.\] Clearly, $\Delta$ is an open neighborhood of $z_0$.

For example, \cref{ax:angles} holds if $\Delta \ni z_0$ is the interior of a hyperbolic triangle with sides $L_1$, $L_2$, $L_3$ and angle measures $\pi/m(1, 2)$, $\pi/m(2, 3)$, $\pi/m(1, 3)$. However, \cref{ax:angles} is weak enough to allow for cases where two of the lines $L_1$, $L_2$, $L_3$ do not intersect or even share an ideal point. In this case, the region $\Delta$ bounded by $L_1$, $L_2$, and $L_3$ may be unbounded or even have infinite area; see \cref{fig:non-triangle-triangle}.

\begin{figure}
    \begin{center}
        \includegraphics[width=
        \linewidth]{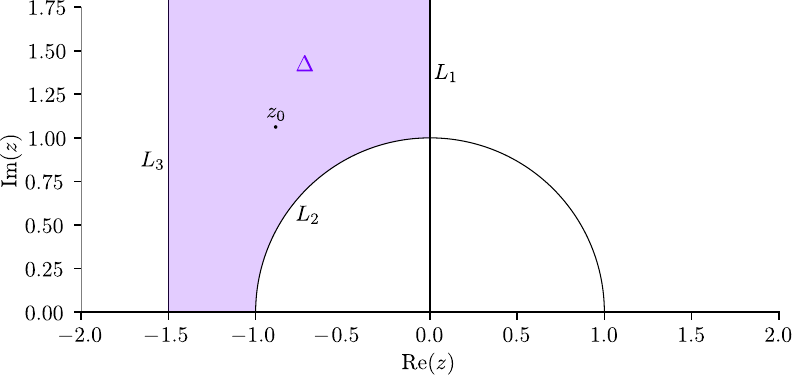}
    \end{center}
    \caption{An example of lines $L_1, L_2, L_3 \subseteq \HH$ and a point $z_0 \in \HH$ satisfying \cref{ax:angles}. In this example, $m(1, 2) = 2$ and $m(2, 3) = m(1, 3) = \infty$.}\label{fig:non-triangle-triangle} 
\end{figure}

The reflections $s_i = t_{L_i}$ generate a group $W$ of isometries of $\HH$ called a \dfn{hyperbolic triangle group}. It is well known that $W$ is infinite, $W$ acts properly on $\HH$, and $(W, S)$ is a Coxeter system with Coxeter matrix $m$, where we take $m(i, i) = 1$ for all $i$ and we take $m(i, i') = \infty$ if the lines $L_i$ and $L_{i'}$ do not intersect \cite[Chapter~II]{MR352287}. The action of $W$ on $\HH$ can also be extended uniquely to a continuous action of $W$ on $\overline{\HH}$.

It is common in the literature to refer to $W$ as the \dfn{$(p, q, r)$ triangle group}, where $p$, $q$, $r$ are the values $m(1, 2)$, $m(2, 3)$, $m(1, 3)$ sorted in nondecreasing order. Note that $W$ is determined up to isomorphism by $p$, $q$, and $r$.

The set $\Delta$ is a fundamental domain for the action of $W$ in the following sense: the sets $w(\Delta)$ for $w \in W$ are disjoint, and their closures cover $\HH$ \cite[Chapter~II]{MR352287}. In other words, the generalized triangles $w(\Delta)$ form a tiling of $\HH$. We refer to the sets $w(\Delta)$ as \dfn{alcoves}. For all $u, w \in W$, the alcoves $u(\Delta)$ and $w(\Delta)$ are adjacent (that is, they share an edge) if and only if $w = us_i$ for some $i \in \{1, 2, 3\}$.

Also associated to the hyperbolic triangle group $W$ is the following arrangement of lines: \[\mathcal{H} = \{L \subseteq \HH \, | \, \text{$L$ is a line and $t_L \in W$}\}.\] Since $t_{wL} = w t_L w^{-1}$ for all lines $L$ and all $w \in W$, the set $\mathcal{H}$ is invariant under the action of $W$. These lines partition the hyperbolic plane into the alcoves $w(\Delta)$.

It is well known that for all $w \in W$, the length $\ell(w)$ is equal to the number of lines $L \in \mathcal{H}$ that separate $z_0$ from $w(z_0)$.

\section{Reduced random walks in the hyperbolic plane}
\label{subsec:oneway}
Use the setup of \cref{subsec:triangle}. We now show how the reduced random walk in the hyperbolic triangle group $W$ can be seen as a random walk $(z_m)_{m \geq 0}$ in the hyperbolic plane $\HH$ itself. In the remainder of the article, we will investigate the limiting behavior of $(z_m)_{m \geq 0}$.

For $i = 1, 2, 3$, define the \dfn{one-way reflection} $\tau_i \colon \overline{\HH} \to \overline{\HH}$ by
\[\tau_i(z) = \begin{cases}
    s_i(z) & \mbox{if $z$ and $z_0$ are on the same side of $L_i$} \\

    z & \mbox{otherwise}
\end{cases}.\]
Each one-way reflection is idempotent: $\tau_i^2 = \tau_i$.

For all $w \in W$, define $\tau_w \colon \overline{\HH} \to \overline{\HH}$ by
\[\tau_w = \tau_{i_1} \circ \cdots \circ \tau_{i_\ell},\] where $i_1 \cdots i_\ell$ is a reduced word for $w$ (that is, $s_{i_1}\cdots s_{i_\ell}=w$ and $w$ has length $\ell$). By Matsumoto's theorem \cite{MR183818}, the element $\tau_w$ is well defined (i.e., does not depend on the choice of reduced word). We have $\tau_w(z_0) = w(z_0)$ and $\tau_{u \star w} = \tau_u \circ \tau_w$ for all $u, w \in W$.

Now, sample elements $i_1, i_2, i_3, \ldots$ uniformly and independently from $\{1, 2, 3\}$. Consider the sequence $(z_m)_{m \geq 0}$, where
\begin{equation}\label{eq:zm-tau}z_m = (\tau_{i_1} \circ \cdots \circ \tau_{i_m})(z_0).\end{equation} By the properties above, we have $z_m = u_m(z_0)$, where \begin{equation}\label{eq:um}u_m = s_{i_1} \star \cdots \star s_{i_m}.\end{equation} Thus, we may think of $(z_m)_{m \geq 0}$ as a geometric interpretation of the reduced random walk. See \cref{fig:pgl2} for an illustration.

The sequence of alcoves \[u_0(\Delta), u_1(\Delta), u_2 (\Delta), \ldots\] that contain the points $z_0, z_1, z_2, \ldots$ respectively was informally described at the end of \cref{subsec:hyperbolic-reduced-intro}. To reiterate, these alcoves form a random walk that never crosses a given line in $\mathcal H$ more than once. This implies the following. 

\begin{lemma}\label{lem:cross}
    For each line $L \in \mathcal H$, there exists an integer $N$ such that the set ${\{z_m \, | \, m \geq N\}}$ lies entirely on one side of the line $L$.
\end{lemma}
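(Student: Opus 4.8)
The plan is to make precise the informal observation that the reduced random walk crosses each line of $\mathcal H$ at most once, and then deduce the stabilization statement from this.

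First I would set up the combinatorial reformulation. Recall that $z_m = u_m(z_0)$ with $u_m = s_{i_1}\star\cdots\star s_{i_m}$, and that $\ell(w)$ equals the number of lines in $\mathcal H$ separating $z_0$ from $w(z_0)$. For a line $L\in\mathcal H$, let $\mathcal{S}(w,L)=1$ if $L$ separates $z_0$ from $w(z_0)$ and $0$ otherwise; so $\ell(w)=\sum_{L\in\mathcal H}\mathcal{S}(w,L)$. The key claim is that for each fixed $L$, the sequence $\big(\mathcal{S}(u_m,L)\big)_{m\geq 0}$ is eventually constant. I would prove this by showing it is \emph{monotone nondecreasing}. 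The crossing structure comes from the fact that passing from $u_m(\Delta)$ to $u_{m+1}(\Delta)$ either does nothing (when $i_{m+1}\in\Des_R(u_m)$, so $u_{m+1}=u_m$) or moves across exactly one line, namely $u_m(L_{i_{m+1}})$; in the latter case $\ell(u_{m+1})=\ell(u_m)+1$. The essential point, which I would extract from the structure of the Demazure product and Axiom~\ref{ax:angles}, is that once $z_0$ and $z_m$ lie on opposite sides of $L$, they remain on opposite sides for all later times: a step that crossed $L$ at time $m+1$ would force $\ell(u_{m+1})<\ell(u_m)$ or $u_{m+1}=u_m$ with $z_0,z_{m+1}$ back on the same side, contradicting that the reduced walk is nondecreasing in length (stated in \cref{sec:coxeter}). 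More carefully: if the walk ever re-crossed $L$, it would have to re-cross the specific $\mathcal H$-line it previously crossed, but the step $z_m\mapsto z_{m+1}=s_{i_{m+1}}u_m^{-1}\cdot$ (reflection in $u_m(L_{i_{m+1}})$) decreases $\ell$ precisely when $u_m(L_{i_{m+1}})$ already separates $z_0$ from $z_m$, and in that situation the reduced walk stays put. Hence $\mathcal{S}(u_m,L)$ can only change from $0$ to $1$, never back.

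Next I would argue the change happens at most once, i.e. the sequence $\big(\mathcal{S}(u_m,L)\big)_m$ is not only monotone but, being $\{0,1\}$-valued and nondecreasing, is eventually constant — so there is an integer $N=N(L)$ (depending on the random sample) with $\mathcal{S}(u_m,L)$ constant for $m\geq N$. If that constant value is $0$, then $L$ never separates $z_0$ from $z_m$, so all of $\{z_m:m\geq 0\}$, hence certainly $\{z_m:m\geq N\}$, lies on the side of $L$ containing $z_0$. If the constant value is $1$, then for all $m\geq N$ the point $z_m$ lies on the side of $L$ \emph{not} containing $z_0$; in particular $\{z_m:m\geq N\}$ lies entirely on one side of $L$. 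Either way the conclusion of the lemma holds.

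The main obstacle is making the ``crosses each line at most once'' claim rigorous rather than merely pictorial. The cleanest route is via the nondecreasing-length property of the reduced walk together with the identity $\ell(u_m)=\#\{L\in\mathcal H: \mathcal{S}(u_m,L)=1\}$: since $\ell(u_m)$ is nondecreasing in $m$ and each single step changes $\ell$ by at most $1$ and changes exactly one indicator $\mathcal{S}(\cdot,L)$, a decrease of any individual $\mathcal{S}(u_m,L)$ would have to be compensated, but a single step toggles only one line's indicator, so no toggling from $1$ to $0$ can occur without $\ell$ decreasing — which is forbidden. I would spell out that a step $u_{m+1}=u_ms_{i_{m+1}}$ (when it is a genuine move) flips $\mathcal{S}(u_m,L)$ for $L=u_m(L_{i_{m+1}})$ only, increasing it from $0$ to $1$; and when $i_{m+1}\in\Des_R(u_m)$ nothing changes. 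This reduces everything to the already-cited facts about the Demazure product and length, so no further hyperbolic-geometry input beyond Axiom~\ref{ax:angles} is needed.
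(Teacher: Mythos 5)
Your proposal is correct and is essentially the paper's argument made rigorous: the paper deduces the lemma immediately from the assertion that the alcove walk never crosses a line of $\mathcal{H}$ more than once, and your formalization via the $\{0,1\}$-valued separation indicators, which are nondecreasing along the walk because a genuine step toggles only the indicator of the crossed wall $u_m(L_{i_{m+1}})$ while $\ell(u_m)=\sum_{L\in\mathcal{H}}\mathcal{S}(u_m,L)$ never decreases, is exactly that assertion spelled out. The only blemish is the garbled expression $s_{i_{m+1}}u_m^{-1}$ for the step, which should be the conjugate reflection $u_m s_{i_{m+1}} u_m^{-1}$ in the wall $u_m(L_{i_{m+1}})$, but this does not affect the argument.
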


We now state and prove our first main theorem about the sequence $(z_m)_{m \geq 0}$.
\begin{theorem}\label{thm:convergence}
    The sequence $(u_m)_{m \geq 0}$ is almost surely not eventually constant. Moreover, if it is not eventually constant, then $(z_m)_{m \geq 0}$ converges to some $\zeta \in \overline{\R}$.
\end{theorem}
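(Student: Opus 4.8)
The plan is to prove the two assertions separately, starting with the convergence of $(z_m)_{m \geq 0}$ assuming $(u_m)$ is not eventually constant, and then handling the claim that this almost surely happens.

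For the convergence statement, I would exploit \cref{lem:cross}. The key structural fact is that the reduced random walk never recrosses a line in $\mathcal H$, so each line $L \in \mathcal H$ is crossed at most once by the trajectory of alcoves $u_0(\Delta), u_1(\Delta), \dots$. Fix an exhaustion of $\mathcal H$ by finite subsets $\mathcal H_1 \subseteq \mathcal H_2 \subseteq \cdots$ with $\bigcup_k \mathcal H_k = \mathcal H$. By \cref{lem:cross}, for each $k$ there is an integer $N_k$ such that $\{z_m \mid m \geq N_k\}$ lies on a single side of every line in $\mathcal H_k$; hence it lies in a single connected component $C_k$ of $\HH \setminus \bigcup_{L \in \mathcal H_k} L$. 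These components are nested: $\overline{C_1} \supseteq \overline{C_2} \supseteq \cdots$ (after replacing $\mathcal H_k$ by a subsequence to ensure nesting, or simply noting that the tail of the sequence is eventually in each). Taking closures in the compact space $\overline{\HH}$, the intersection $K = \bigcap_k \overline{C_k}$ is a nonempty compact connected set, and the tail of $(z_m)$ accumulates only in $K$. The geometric input is that each $\overline{C_k}$ is a region bounded by finitely many geodesics (a hyperbolic polygon, possibly ideal), and because the alcoves tile $\HH$ with every alcove having finite diameter in a suitable sense—more precisely, because the walk's length $\ell(u_m)$ tends to infinity when $(u_m)$ is not eventually constant (length is nondecreasing and strictly increases at each genuine move, of which there are infinitely many)—the number of lines in $\mathcal H$ separating $z_0$ from $z_m$ tends to infinity. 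Since only finitely many lines separate $z_0$ from any given point of $\HH$, the points $z_m$ must escape every compact subset of $\HH$, so $K \subseteq \overline{\R}$. It then remains to show $K$ is a single point: $K$ is a closed connected subset of $\overline{\R}$, so it is either a point or a closed arc; I would rule out a nondegenerate arc by observing that if $K$ contained two distinct points $\xi \neq \xi'$ of $\overline{\R}$, the geodesic between them would be a line, and infinitely many lines of $\mathcal H$ cross that geodesic (since $\ell(u_m) \to \infty$ forces infinitely many crossings), but each such line is crossed by the walk at most once and separates $\overline{\R}$ into two arcs—eventually the trajectory is confined to one side of a line whose ideal points lie strictly between $\xi$ and $\xi'$, contradicting accumulation at both. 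Thus $K = \{\zeta\}$ and $z_m \to \zeta$.

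For the first assertion—that $(u_m)$ is almost surely not eventually constant—the event that $(u_m)$ is eventually constant is the event that the walk reaches some element $w \in W$ with $\Des_R(w) = \{1,2,3\}$ and stays there. In a hyperbolic triangle group $W$, which is infinite, I claim no element has all three simple reflections as right descents. Indeed, if $\Des_R(w) = I$ then $w$ would be the longest element $w_0$ of $W$; but $W$ is infinite, so no longest element exists and $\Des_R(w) \subsetneq I$ for every $w$. Hence from every state $w$ there is at least one index $i \notin \Des_R(w)$, so with probability at least $1/3$ the walk moves at the next step. By the second Borel–Cantelli lemma (applied along the natural filtration, using the Lévy extension / conditional Borel–Cantelli since these events are not independent but have conditionally bounded-below probabilities), infinitely many genuine moves occur almost surely, so $(u_m)$ is not eventually constant. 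This also shows $\ell(u_m) \to \infty$ a.s., feeding back into the convergence argument above.

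The main obstacle I anticipate is the topological heart of the convergence argument: showing that the nested closed regions $\overline{C_k}$ shrink to a single point of $\overline{\R}$ rather than to a larger connected set. The delicate point is quantitative control—that a region cut out by the lines separating $z_0$ from $z_m$ has ideal boundary shrinking to a point as $m \to \infty$. This should follow from the fact that $\mathcal H$ is a locally finite arrangement whose lines, together, separate points of $\overline{\R}$: any two distinct boundary points are separated by some line of $\mathcal H$, and once the walk has crossed past that line it can never return, so it is trapped strictly to one side. Making the "trapped to one side of arbitrarily fine partitions of $\overline{\R}$" precise—essentially that the nerve of the arrangement restricted to the boundary generates the topology of $\overline{\R}$—is where the real work lies, but it is a standard consequence of the tiling property of triangle groups and the fact that the orbit $W \cdot z_0$ has all of $\overline{\R}$ (or a dense subset thereof) as its limit set.
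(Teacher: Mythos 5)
Your first assertion (that $(u_m)_{m\geq 0}$ is almost surely not eventually constant) is argued essentially as in the paper: since $W$ is infinite, no element has full right descent set, so at every step the walk moves with probability at least $1/3$, and a conditional Borel--Cantelli/Markov argument finishes. That part is fine. The gap is in the convergence argument, at exactly the point you yourself flag as ``where the real work lies.'' To rule out two distinct subsequential limits $\xi\neq\xi'$ you assert that infinitely many (indeed, at least one) lines of $\mathcal H$ cross the geodesic $M$ joining them --- equivalently, that any two distinct points of $\overline{\R}$ are separated by some wall --- and you justify this by ``$\ell(u_m)\to\infty$ forces infinitely many crossings'' and by the limit set of $W(z_0)$ being all (or a dense subset) of $\overline{\R}$. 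Neither claim holds in the generality of \cref{ax:angles}: the walls counted by $\ell(u_m)$ are those separating $z_0$ from $z_m$, which need not meet $M$ at all, and the setup explicitly allows infinite-covolume configurations (see \cref{fig:non-triangle-triangle}) in which an alcove has a whole arc of ideal boundary. Two distinct points in the interior of such an arc are separated by \emph{no} wall of $\mathcal H$, since any wall with an ideal endpoint interior to the arc would have to enter the alcove; likewise $M$ may itself be a wall met by no other wall (possible when some $m(i,i')=\infty$). So your dichotomy ``point or arc, and arcs are split by walls'' breaks down precisely in these degenerate cases, and they are not pathologies one can wave away: they occur for the groups this paper treats.

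The paper's proof handles exactly these cases by a separate argument. After establishing (as you do) that any subsequential limit lies in $\overline{\R}$, it considers the line $M$ through $\xi$ and $\xi'$ and splits into three cases: if some wall $L\neq M$ meets $M$, then $L$ separates $\xi$ from $\xi'$ and \cref{lem:cross} gives the contradiction (this is the only case your argument covers); if $M\in\mathcal H$ but no other wall meets it, one takes the finitely many walls bounding the two alcoves adjacent to $M$; and if no wall meets $M$, so that $M$ lies inside a single alcove $v(\Delta)$, one takes the three walls of $v(\Delta)$. In the latter two cases the key observations are that each alcove contains exactly one orbit point (so the walk cannot be eventually confined to it, since every point of $\HH$ is visited finitely often), and that the closure of any far-side region can contain both $\xi$ and $\xi'$ only if $M$ is itself one of the chosen walls, which the case hypotheses exclude. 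To repair your proposal you would need to add an argument of this kind for the case where no wall separates $\xi$ from $\xi'$; the appeal to density of the limit set in $\overline{\R}$ is not available here.
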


\begin{proof}
    By \eqref{eq:um}, we have for all $m$ that $u_{m + 1} = u_{m} \star s_{i_{m + 1}}$, so $u_m = u_{m + 1}$ if and only if $i_{m + 1}$ is a right descent of $u_m$. Because $W$ is infinite, we have ${\Des_R(u_m) \subsetneq \{1, 2, 3\}}$, so $u_{m} = u_{m + 1}$ with probability at most $2/3$. Since $(u_m)_{m \geq 0}$ is a Markov chain, this implies that $(u_m)_{m \geq 0}$ is almost surely not eventually constant. 

    Now, suppose that $(u_m)_{m \geq 0}$ is not eventually constant. For all $m$ with $u_m \neq u_{m + 1}$, we have $\ell(u_{m}) < \ell(u_{m + 1})$. Therefore, \[\lim_{m \to \infty} \ell(u_m) = \infty,\] so every $w \in W$ appears in the sequence $(u_m)_{m \geq 0}$ finitely many times. Since $z_m = u_m(z_0)$ and $z_0$ has trivial stabilizer in $W$, every $z \in \HH$ appears in the sequence $(z_m)_{m \geq 0}$ finitely many times.
    
    Because $\overline{\HH}$ is compact, the sequence $(z_m)_{m\geq 0}$ has a subsequence converging to some $\zeta \in \overline{\HH}$. The action of $W$ on $\HH$ is proper, so $\zeta \in \overline{\R}$. Assume for the sake of contradiction that $(z_m)_{m \geq 0}$ does not converge to $\zeta$. Then $(z_m)_{m\geq 0}$ has a subsequence converging to some $\zeta' \in \overline{\HH}$ with $\zeta' \neq \zeta$. By the same argument as before, $\zeta'\in\overline{\R}$.

    Our strategy will be as follows. We will find a small set $\mathcal{L} \subseteq \mathcal{H}$ of lines that divide the hyperbolic plane $\HH$ into regions $U_1, \ldots, U_k$. By \cref{lem:cross}, there is some $j\in[k]$ such that $z_m\in U_j$ for all sufficiently large $m$. Hence, $\zeta$ and $\zeta'$ are both in the topological closure of $W(z_0) \cap U_j$, where $W(z_0)$ denotes the orbit of $z_0$ under the action of $W$. By choosing the set $\mathcal{L}$ of lines appropriately, we can ensure that for all $j \in [k]$, either the set $W(z_0) \cap U_j$ is finite or the topological closure of $U_j$ does not contain both $\zeta$ and $\zeta'$. This will yield the desired contradiction.
        
    Consider the unique line $M \subseteq \HH$ with ideal points $\zeta$ and $\zeta'$. We consider three cases.

{\bf Case 1.} Suppose that there is a line $L\in\mathcal H$ such that $L \neq M$ and $L \cap M \neq \emptyset$. Then $\zeta$ and $\zeta'$ are on opposite sides of $L$, so we may take $\mathcal{L}=\{L\}$.

{\bf Case 2.} Suppose that $M\in\mathcal H$ and $L \cap M = \emptyset$ for all $L \in \mathcal{H} \setminus \{M\}$. Then, the line $M$ separates two alcoves $v_1(\Delta)$ and $v_2(\Delta)$. We may take $\mathcal{L}$ to be the set \[\{v_1(L_1), v_1(L_2), v_1(L_3), v_2(L_1), v_2(L_2), v_2(L_3)\}\] of all the sides of $v_1(\Delta)$ and $v_2(\Delta)$. (Note that this set has five elements because $M$ is a side of both $v_1(\Delta)$ and $v_2(\Delta)$.)

{\bf Case 3.} Suppose that $L \cap M = \emptyset$ for all $L \in \mathcal{H}$. Then, $M$ is contained in an alcove $v(\Delta)$. We may take $\mathcal{L}$ to be the set \[\{v(L_1), v(L_2), v(L_3)\}\] of the sides of $v(\Delta)$.
\end{proof}

\begin{remark}
    \cref{thm:convergence} implies that the sequence $(z_m)_{m \geq 0}$ converges surely. Contrast this with the reduced random walk in an affine Weyl group, which converges to a point on the boundary of the Tits cone almost surely but not surely \cite[Theorem~1]{Lam}.
\end{remark}

Since the pointwise limit of measurable functions is measurable, the limit $\zeta = \lim_{m \to \infty} z_m$ is a random variable. Our primary goal in the remainder of this article is to determine how $\zeta$ is distributed in $\overline{\R}$. To do so, we will use the following key theorem.

\begin{theorem}\label{thm:key}
    Let $\mu$ denote the probability distribution of the limit \[\zeta = \lim_{m \to \infty} z_m\] in $\overline{\R}$. Then \begin{equation}\label{eq:stationary}\mu = \frac{1}{3}\left((\tau_1)_*\mu + (\tau_2)_*\mu + (\tau_3)_*\mu\right),\end{equation} where $(\tau_i)_*\mu$ denotes the pushforward of $\mu$ by the function $\tau_i$.
\end{theorem}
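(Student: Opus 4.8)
The plan is to exploit the self-similar structure of the reduced random walk: deleting its first step yields an independent copy of the walk, pre-composed with a single one-way reflection, and the asserted identity \eqref{eq:stationary} is precisely the statement that $\mu$ is stationary under this operation. Concretely, I would set $z'_k = (\tau_{i_2} \circ \cdots \circ \tau_{i_{k+1}})(z_0)$ for $k \geq 0$, so that \eqref{eq:zm-tau} gives $z_m = \tau_{i_1}(z'_{m-1})$ for every $m \geq 1$. The sequence $(z'_k)_{k \geq 0}$ is the geometric realization \eqref{eq:zm-tau} of the reduced random walk driven by the shifted index sequence $(i_{k+1})_{k \geq 1}$. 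Since the shifted sequence has the same law as $(i_k)_{k \geq 1}$ and is independent of $i_1$, the process $(z'_k)_{k \geq 0}$ has the same law as $(z_m)_{m \geq 0}$ and is independent of $i_1$. Applying \cref{thm:convergence} to the shifted walk, we conclude that $(z'_k)_{k \geq 0}$ almost surely converges to a random variable $\zeta' \in \overline{\R}$, that $\zeta'$ has distribution $\mu$, and that $\zeta'$ is independent of $i_1$.

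The next step is to observe that each $\tau_i$ is continuous as a map $\overline{\HH} \to \overline{\HH}$. Indeed, writing $\overline{\HH}$ as the union of the two closed regions into which it is cut by $L_i$, the map $\tau_i$ coincides with the homeomorphism $s_i$ on the closed region containing $z_0$, coincides with the identity on the other closed region, and the two descriptions agree on the common boundary $\overline{L_i}$ because $s_i$ fixes $L_i$ pointwise and hence fixes $\overline{L_i}$; the gluing lemma then gives continuity. Since $\tau_i(\overline{\R}) \subseteq \overline{\R}$, this is consistent with $\zeta, \zeta' \in \overline{\R}$. Passing to the limit $m \to \infty$ in the identity $z_m = \tau_{i_1}(z'_{m-1})$ now yields $\zeta = \tau_{i_1}(\zeta')$ almost surely.

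Finally, for any Borel set $A \subseteq \overline{\R}$, conditioning on $i_1$ and using that $\zeta'$ has law $\mu$ and is independent of $i_1$ gives
\[
\mu(A) = \Pr\!\left[\tau_{i_1}(\zeta') \in A\right] = \sum_{i=1}^3 \frac{1}{3}\,\Pr\!\left[\zeta' \in \tau_i^{-1}(A)\right] = \frac{1}{3}\sum_{i=1}^3 \mu\!\left(\tau_i^{-1}(A)\right) = \frac{1}{3}\sum_{i=1}^3 (\tau_i)_*\mu(A),
\]
which is exactly \eqref{eq:stationary}. I expect the only genuinely delicate point to be the continuity of $\tau_i$ at the boundary $\overline{\R}$ of $\overline{\HH}$, which is what lets us move the limit inside $\tau_{i_1}$; the remainder is a routine shift-and-independence argument, and one should be a little careful to define $\zeta$ and $\zeta'$ on the measure-zero event where the underlying Demazure products are eventually constant, which does not affect the distributional identity.
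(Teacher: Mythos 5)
Your proposal is correct and is essentially the paper's own proof: the same shift of the index sequence to produce a copy $\zeta'$ of $\zeta$ that is independent of $i_1$, the identity $z_{m+1} = \tau_{i_1}(z'_m)$ passed to the limit via continuity of $\tau_{i_1}$ on $\overline{\HH}$, and then conditioning on $i_1$. The only difference is that you spell out the gluing-lemma verification of continuity of $\tau_i$ and the final pushforward computation, which the paper leaves implicit.
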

\begin{proof}
    For $m \geq 0$, let \[z'_m = (\tau_{i_2} \circ \tau_{i_3} \cdots \circ \tau_{i_{m + 1}})(z_0).\] This is the same as the definition of $z_m$ given in \eqref{eq:zm-tau}, but using the sequence of indices $i_2, i_3, i_4, \ldots$ instead of $i_1, i_2, i_3, \ldots$. Therefore, the sequences $(z_m)_{m \geq 0}$ and $(z'_m)_{m \geq 0}$ are identically distributed, so the limit \[\zeta' = \lim_{m \to \infty} z'_m\] also has distribution $\mu$.

    For all $m \geq 0$, we have $\tau_{i_1}(z'_m) = z_{m + 1}$. Since $\tau_{i_1}$ is continuous, we may take the limit as $m \to \infty$ to conclude that \[\tau_{i_1}(\zeta') = \zeta.\] Since $i_1$ is uniformly distributed in $\{1, 2, 3\}$ and independent from $\zeta'$, the theorem follows. 
\end{proof}

\section{Uniqueness of the stationary measure}\label{sec:uniqueness}

In this section, we will prove the following theorem.

\begin{theorem}\label{thm:uniqueness}
    There exists a unique probability measure $\mu$ on $\overline{\R}$ that satisfies the equation
    \[\mu = \frac{1}{3}\left((\tau_1)_*\mu + (\tau_2)_*\mu + (\tau_3)_*\mu\right)\]
    from \cref{thm:key}.
\end{theorem}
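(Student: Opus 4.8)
The plan is to prove uniqueness via a contraction-mapping argument, working in the Poincaré disk model so that the maps $\tau_i$ become Lipschitz (or at least non-expanding) with respect to a convenient metric on the space of probability measures. Concretely, I would transport everything from $\overline{\HH}$ to the closed unit disk $\overline{\DD}$ via a conformal isomorphism sending $z_0$ to the center $0$; the boundary $\overline{\R}$ becomes the unit circle $\partial\DD$, and each one-way reflection $\tau_i$ becomes a map that is the identity on one arc and an honest hyperbolic reflection on the complementary region. The key geometric observation is that, because $z_0$ (now $0$) lies strictly inside the region $\Delta$, each line $L_i$ is at positive distance from the center, so the reflection $s_i$ restricted to the ``far'' side is a strict contraction of $\overline{\DD}$ in the Euclidean metric; the one-way reflection $\tau_i$ is therefore $1$-Lipschitz on all of $\overline{\DD}$ and strictly contracting on the part of $\partial\DD$ that it moves.

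The main step is then to define the operator $\Phi(\nu) = \frac{1}{3}\sum_{i=1}^3 (\tau_i)_*\nu$ on the space $\mathcal{P}(\partial\DD)$ of Borel probability measures on the circle, equipped with the Wasserstein ($=$ Kantorovich) metric $d_W$ induced by the Euclidean metric on $\partial\DD$. Since each $\tau_i$ is $1$-Lipschitz, $\Phi$ is non-expanding, which is not yet enough. To get a genuine contraction I would iterate: consider $\Phi^N$ for $N$ large. A composition $\tau_{i_1}\circ\cdots\circ\tau_{i_N}$, averaged over all $3^N$ index strings, will with positive probability include a string that forces a strict contraction by a definite factor — for instance, because for any two boundary points $\xi,\xi'$ one can find a wall $L\in\mathcal H$ separating them (this is exactly the geometric input used in the Case analysis of the proof of \cref{thm:convergence}, applied to an appropriate finite set of walls near the two points), and crossing that wall contracts the distance between the relevant coupled points. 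Quantifying this: there is a constant $c<1$ and an integer $N$ such that for any coupling of $\nu,\nu'$, averaging the $3^N$ compositions contracts the transport cost by a factor at most $c$, giving $d_W(\Phi^N\nu,\Phi^N\nu')\le c\, d_W(\nu,\nu')$. Then $\Phi^N$ is a contraction on the complete metric space $(\mathcal{P}(\partial\DD),d_W)$, so it has a unique fixed point; since any fixed point of $\Phi$ is a fixed point of $\Phi^N$, and $\Phi$ maps the set of $\Phi^N$-fixed points to itself, $\Phi$ has a unique fixed point as well. By \cref{thm:key} the distribution $\mu$ of $\zeta$ is one such fixed point, so it is the unique one. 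Translating back through the conformal isomorphism yields the statement on $\overline{\R}$.

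The hard part will be making the ``definite contraction over $3^N$ strings'' estimate both correct and uniform over all pairs of boundary points simultaneously. The subtlety is that as $\xi,\xi'$ get close together, the separating wall $L$ recedes toward the boundary, and a reflection in a wall far from the center contracts Euclidean distances only by a factor close to $1$; so a single wall does not give a uniform bound. The fix is to exploit that the walls are dense enough that between any two points at circular distance $\ge\delta$ one can cross \emph{many} walls in the allotted $N$ steps, or alternatively to use a different metric on $\partial\DD$ — e.g. the metric $\rho(\xi,\xi')$ given by the harmonic measure / visual angle, or a snowflake-type metric $|\xi-\xi'|^\alpha$ — with respect to which the $\tau_i$ are uniform strict contractions on the arcs they move, at the cost of checking they remain non-expanding elsewhere. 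A cleaner route, which I would try first, is to avoid quantitative contraction entirely: show directly that if $\mu_1,\mu_2$ both satisfy \eqref{eq:stationary} then they are equal, by representing each as the law of $\zeta=\lim_{m\to\infty}(\tau_{i_1}\circ\cdots\circ\tau_{i_m})(Z)$ for an arbitrary starting random variable $Z$ — using \cref{thm:convergence}-style arguments to show this limit exists a.s.\ and is independent of the law of $Z$ (because any two trajectories are eventually trapped on the same side of every wall, hence converge to the same boundary point). That ``synchronization'' argument sidesteps the metric estimates and is the approach I expect to be the most robust.
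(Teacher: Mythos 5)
Your framework---pass to the Poincar\'e disk model with $z_0=0$, view the $\tau_i$ as self-maps of the circle, and run a coupling/contraction argument for the operator $\Phi(\nu)=\tfrac13\sum_i(\tau_i)_*\nu$---is the same as the paper's, but the step you yourself flag as the hard part is exactly the step you have not supplied, and your proposed fixes do not close it. The paper's key point, which your proposal misses, is that the strict contraction does \emph{not} come from a wall of $\mathcal H$ separating the two boundary points $x,y$ (which, as you correctly observe, recedes toward the boundary and contracts by a factor tending to $1$ as $x\to y$). It comes from the three fixed lines $L_1,L_2,L_3$ bounding the fundamental alcove: writing $L_i=\{z\,|\,|z-c_i|=r_i\}$, one proves (\cref{lem:contract}) that $|\tau_i(x)-\tau_i(y)|\leq \min(1,r_i/|x-c_i|)\,\min(1,r_i/|y-c_i|)\,|x-y|$ for all $x,y\in S^1$, including the mixed case where $\tau_i$ fixes one point and inverts the other (this case is not automatic; it needs a short computation with the triangle inequality). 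Because this bound is a product of separate factors for $x$ and for $y$, it suffices that \emph{one} of the two points be uniformly far from the circle of inversion. Since every $x\in S^1$ lies strictly on the $z_0$-side of some $L_i$, compactness of $S^1$ yields a single constant $C<1$ such that for every pair $(x,y)$ some $i$ contracts their distance by a factor less than $C$, uniformly, no matter how close $x$ and $y$ are (\cref{lem:strict-contract}). Hence already one step of the coupled chain contracts the expected distance by $(C+2)/3<1$, and no passage to $\Phi^N$, no snowflake or visual metric, and no ``many walls in $N$ steps'' estimate is needed; testing against Lipschitz functions then gives $\mu=\nu$.

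Your fallback ``synchronization'' route is also not yet a proof, for two reasons. First, \cref{thm:convergence} establishes convergence of $(\tau_{i_1}\circ\cdots\circ\tau_{i_m})(z_0)$ only for the interior basepoint $z_0$; you would need almost sure convergence of $(\tau_{i_1}\circ\cdots\circ\tau_{i_m})(Z)$ for an arbitrary (boundary-valued) starting variable $Z$, together with independence of the limit from $Z$, and neither is established. Second, the heuristic that ``any two trajectories are eventually trapped on the same side of every wall'' pertains to the forward Markov trajectories $X_{m+1}=\tau_{i_{m+1}}(X_m)$, whereas in the backward composition the new map is applied innermost, so $m\mapsto(\tau_{i_1}\circ\cdots\circ\tau_{i_m})(x)$ is not a Markov trajectory and the wall-crossing argument behind \cref{lem:cross} does not transfer directly; what you would actually need is something like $\operatorname{diam}\bigl(\tau_{u_m}(S^1)\bigr)\to 0$ almost surely, which requires its own geometric or quantitative argument. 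In short, the proposal identifies the right setting but lacks the one estimate that makes the contraction uniform; the product-form bound of \cref{lem:contract} together with the compactness observation of \cref{lem:strict-contract} is how the paper closes the gap.
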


In some sense, \cref{thm:key,thm:uniqueness} completely answer the question of how the limit $\zeta = \lim_{m \to \infty} z_m$ is distributed. Later, in \cref{sec:pgl2,sec:interro,sec:pgl2-limit}, we will explicitly describe the distribution of $\zeta$ in the important case $W = PGL_2(\Z)$.
\subsection{The Poincar\'e disk model}\label{subsec:disk}
In this section and only this section, it will be convenient to use a different model for the hyperbolic plane.

The \dfn{Poincar\'e disk model of the hyperbolic plane} is the open unit disk $\DD = \{z \, | \, |z| < 1\} \subseteq \C$ in the complex plane. There is a biholomorphism $\varphi \colon \HH \to \DD$ known as the \dfn{Cayley transform} and given by
\[\varphi(z) = \frac{z - i}{z + i}.\]

Via the Cayley transform, we may transfer all the notions introduced in \cref{subsec:hyperbolic} from $\HH$ to $\DD$. Briefly:
\begin{itemize}
\item There is a conformal Riemannian metric \[ds^2 = \frac{4(dx^2 + dy^2)}{(1 - |z|)^2}\] on $\DD$, which makes $\DD$ into a surface with constant negative Gaussian curvature.
\item The boundary of $\DD$ is the unit circle $S^1 = \{z \in \C \, | \, |z| = 1\}$. Denote the closed unit disk by $\overline{\DD} = \DD \cup S^1$.
\item A \dfn{(hyperbolic) line} in $\DD$ is a subset of the form 
\[\{z \in \DD \, | \, z / c \in \R\}\]
or
\[\{z \in \DD \, | \, |z - c|^2 = |c|^2 - 1\}\]
for some $c \in \C$ with $|c| > 1$. The condition $z / c \in \R$ defines a diameter of the unit circle, whereas the equation $|z - c|^2 = |c|^2 - 1$ defines a circle centered at $c$ that is orthogonal to the unit circle.
\item For each hyperbolic line $L \subseteq \DD$, there is a unique anticonformal isometry $t_L \colon \DD \to \DD$ that fixes $L$. If $L = \{z \in \DD \, | \, z / c \in \R\}$ for some $c \in \C$, then $t_L$ is given by \[t_L(z) = \frac{\overline{z}c}{\overline{c}}.\] In the language of Euclidean geometry, this is the usual reflection in the line $\{z \in \C \, | \, z / c \in \R\}$.

On the other hand, if $\{z \in \DD \, | \, |z - c|^2 = |c|^2 - 1\}$ for some $c \in \C$ with $|c| > 1$, then $t_L$ is given by \[t_L(z) = \frac{|c|^2 - 1}{\overline{z} - \overline{c}} + c.\] In the language of Euclidean geometry, this is the inversion with respect to the circle centered at $c$ with radius $\sqrt{|c|^2 - 1}$.

We refer to $t_L$ as the \dfn{reflection} in the line $L$; it extends uniquely to a continuous function $t_L \colon \overline{\DD} \to \overline{\DD}$.
\end{itemize}
For details, see any reference text on hyperbolic geometry \cite{MR2161463}.

We may also perform the setup of \cref{subsec:triangle} and \cref{subsec:oneway} in the Poincar\'e disk model $\DD$ rather than the Poincar\'e half-plane model $\HH$. This yields a point $z_0 \in \DD$, three lines $L_1, L_2, L_3 \subseteq \DD$, and three one-way reflections $\tau_1, \tau_2, \tau_3 \colon \overline{\DD} \to \overline{\DD}$. We will prove \cref{thm:uniqueness} in this equivalent setting.

By applying a suitable isometry of $\DD$, we may make the additional assumption that $z_0 = 0$. Since $z_0$ does not lie on any of the lines $L_1, L_2, L_3$, this implies that for $i = 1, 2, 3$, the line $L_i$ is of the form \[\{z \in \DD \, | \, |z - c_i|^2 = |c_i|^2 - 1\},\] where $c_i \in \C$ with $|c_i| > 1$. Let us define $r_i = \sqrt{|c_i|^2 - 1}$. The one-way reflection $\tau_i \colon \overline{\DD} \to \overline{\DD}$ is then given by \begin{equation}\label{eq:tau-disk}\tau_i(z) = \begin{cases} \displaystyle\frac{r_i^2}{\overline{z} - \overline{c_i}} + c_i & \mbox{if $|z - c_i| > r_i$} \\ z & \mbox{otherwise}\end{cases}.\end{equation}

\subsection{The contraction properties}
As in the statement of \cref{thm:uniqueness}, let us now consider $\tau_1, \tau_2, \tau_3$ as functions $S^1 \to S^1$ on the boundary of the hyperbolic plane. We will prove that $\tau_1, \tau_2, \tau_3$ satisfy two contraction properties.
\begin{lemma}\label{lem:contract}
    For all $x, y \in S^1$ and all $i = 1, 2, 3$, we have \[|\tau_i(x) - \tau_i(y)| \leq \min\left(1, \frac{r_i}{|x - c_i|}\right)\min\left(1, \frac{r_i}{|y - c_i|}\right)|x - y| \leq |x - y|.\]
\end{lemma}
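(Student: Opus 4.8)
The plan is to pass to the unit circle $S^1$, where by \eqref{eq:tau-disk} each $\tau_i$ is piecewise of a very rigid type: it restricts to the inversion $s_i(z)=\frac{r_i^2}{\overline z-\overline{c_i}}+c_i$ on the open arc $A_i^+=\{z\in S^1:|z-c_i|>r_i\}$ and to the identity on the complementary closed arc. The only computational tool I will need is the elementary inversion-distance identity
\[|s_i(u)-s_i(v)|=\frac{r_i^2\,|u-v|}{|u-c_i|\,|v-c_i|}\qquad(u,v\neq c_i),\]
obtained by a one-line manipulation of the formula for $s_i$; I will also use the standard facts that $s_i$ maps $S^1$ to itself (since $L_i$ is orthogonal to $S^1$), fixes $S^1\cap L_i$ pointwise, is an involution, and interchanges the two arcs. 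Writing $a=|x-c_i|$ and $b=|y-c_i|$, I split into three cases depending on which of $x,y$ lie in $A_i^+$.

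Two of the cases are immediate. If neither $x$ nor $y$ lies in $A_i^+$, then $a,b\le r_i$, so $\tau_i$ fixes both points and both sides of the asserted inequality equal $|x-y|$. If both lie in $A_i^+$, then $a,b>r_i$ and $\tau_i$ agrees with $s_i$ on both, so the inversion-distance identity gives $|\tau_i(x)-\tau_i(y)|=\frac{r_i^2}{ab}|x-y|=\frac{r_i}{a}\cdot\frac{r_i}{b}\cdot|x-y|$, again with equality. So the only case with any content is the mixed one; by symmetry assume $x\notin A_i^+$ and $y\in A_i^+$, so $a\le r_i<b$ and the claim reduces to $|x-s_i(y)|\le\frac{r_i}{b}|x-y|$. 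The boundary sub-case $a=r_i$ is handled directly (then $s_i(x)=x$ and the identity gives equality), and the sub-case $y=s_i(x)$ is trivial (then $s_i(y)=x$ and the left side is $0$); so assume henceforth $a<r_i$ and $y\neq s_i(x)$, which makes $x,x',y,y'$ four distinct points of $S^1$, where $x':=s_i(x)\in A_i^+$ and $y':=s_i(y)\in A_i^-$.

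The heart of the argument is to pin down the cyclic order of $x,x',y,y'$ on $S^1$. The chords $[x,x']$ and $[y,y']$ lie along distinct Euclidean lines through $c_i$ (inversion in $L_i$ moves points along rays emanating from $c_i$), and these lines meet only at the point $c_i$, which lies outside $\overline{\DD}$; hence the two chords are disjoint, so the pairs $\{x,x'\}$ and $\{y,y'\}$ do not interleave on $S^1$. Furthermore, $x$ and $y'$ are the respective intersection points nearer to $c_i$ (because $a<r_i<r_i^2/a=|x'-c_i|$ and $r_i^2/b=|y'-c_i|<r_i<b$), so $x$ and $y'$ lie on the arc of $S^1$ that faces $c_i$ while $x'$ and $y$ lie on the opposite arc. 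Together these two facts force the cyclic order to be $x,x',y,y'$, so that $[x,y]$ and $[x',y']$ are the diagonals of a convex cyclic quadrilateral. Ptolemy's theorem then yields
\[|x-y'|\,|x'-y|=|x-y|\,|x'-y'|-|x-x'|\,|y-y'|\le|x-y|\,|x'-y'|.\]
To conclude, I substitute the inversion-distance identity twice: $|x'-y'|=\frac{r_i^2}{ab}|x-y|$, and, since $y=s_i(y')$ and $|y'-c_i|=r_i^2/b$, also $|x'-y|=|s_i(x)-s_i(y')|=\frac{b}{a}|x-y'|$. Plugging these into the displayed inequality and cancelling gives $|x-y'|^2\le\frac{r_i^2}{b^2}|x-y|^2$; since $\tau_i(x)=x$ and $\tau_i(y)=s_i(y)=y'$, this is exactly $|\tau_i(x)-\tau_i(y)|\le\frac{r_i}{b}|x-y|$, the desired bound in the mixed case. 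The final inequality in the statement is immediate because each factor $\min(1,r_i/|x-c_i|)$, $\min(1,r_i/|y-c_i|)$ is at most $1$.

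The step I expect to be the main obstacle is the determination of the cyclic order of $x,x',y,y'$ in the mixed case — the rest is routine bookkeeping with the inversion-distance identity. I also want to flag that one genuinely needs Ptolemy's \emph{equality} for a cyclic quadrilateral here, not merely the Ptolemy inequality: the asserted bound is sharp (each of the three cases attains equality somewhere), so no slack in the estimates can be afforded.
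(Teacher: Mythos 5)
Your proof is correct, but the interesting case is handled by a genuinely different argument than the paper's. The trivial cases (both points fixed, or both inverted) are identical in both proofs. For the mixed case, the paper does not use $S^1$ at all: it picks, via the intermediate value theorem, a point $z=\lambda x+(1-\lambda)y$ on the Euclidean segment between $x$ and $y$ with $|z-c_i|=r_i$, so that $|x-y|=|x-z|+|z-y|$, and then runs a short triangle-inequality computation after rewriting $\overline{z}-\overline{c_i}=r_i^2/(z-c_i)$; this is purely algebraic and would work for arbitrary points of $\C$, not just boundary points. Your route instead exploits that $x,y\in S^1$ and that $L_i$ is orthogonal to $S^1$ (so $s_i$ preserves $S^1$ and swaps the two arcs cut out by $L_i$): the concyclic points $x,\,x'=s_i(x),\,y,\,y'=s_i(y)$, the disjointness of the chords $[x,x']$ and $[y,y']$ (both lying on distinct lines through $c_i$, which is outside $\overline{\DD}$), and the arc constraint do force the cyclic order you claim, and Ptolemy's equality plus two applications of the inversion-distance identity then give exactly $|x-y'|\le (r_i/b)|x-y|$ -- I checked the cyclic-order deduction and the cancellation, and they are sound; the boundary subcases $a=r_i$ and $y=s_i(x)$ are correctly peeled off (and distinctness of the two lines follows from the distinctness of the four points, which you have). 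Your closing remark is also apt: the general-position Ptolemy inequality points the wrong way here, so concyclicity and convex position are genuinely needed. What each approach buys: yours is a pleasant synthetic proof that makes the sharpness of the bound transparent, at the cost of the cyclic-order bookkeeping and of being tied to the boundary circle; the paper's is shorter, needs no orthogonality or convexity facts, and transfers verbatim to points off $S^1$.
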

\begin{proof}
    The second inequality is trivial, so it suffices to prove the first inequality. We have three cases.
    
    {\bf Case 1.} Suppose that $|x - c_i| \leq r_i$ and $|y - c_i| \leq r_i$. By \eqref{eq:tau-disk}, we have $\tau_i(x) = x$ and $\tau_i(y) = y$, so \[|\tau_i(x) - \tau_i(y)| = |x - y| = \min\left(1, \frac{r_i}{|x - c_i|}\right)\min\left(1, \frac{r_i}{|y - c_i|}\right) |x - y|,\]
    and the inequality holds with equality.
    
    {\bf Case 2.} Suppose that $|x - c_i| > r_i$ and $|y - c_i| > r_i$. Then, by \eqref{eq:tau-disk}, we have
    \begin{align*}
        |\tau_i(x) - \tau_i(y)| &= \abs{\left(\frac{r_i^2}{\overline{x} - \overline{c_i}} + c_i\right) - \left(\frac{r_i^2}{\overline{y} - \overline{c_i}} + c_i\right)} \\
        &= \frac{r_i^2\abs{x - y}}{\abs{x - c_i} \abs{y - c_i}} \\
        &= \min\left(1, \frac{r_i}{|x - c_i|}\right)\min\left(1, \frac{r_i}{|y - c_i|}\right) |x - y|,
    \end{align*}
    so the inequality holds with equality again. 

    {\bf Case 3.} Suppose that $|x - c_i| > r_i$ and $|y - c_i| \leq r_i$. (The remaining case that $|x - c_i| \leq r_i$ and $|y - c_i| > r_i$ is analogous.)
    By the intermediate value theorem, there exists $\lambda \in \R$ with $0 < \lambda \leq 1$ such that \[\abs{\lambda x + (1 - \lambda)y - c_i} = r_i.\] Let $z = \lambda x + (1 - \lambda)y$; then, we have $|z - c_i| = r_i$ and $|x - y| = |x - z| + |z - y|$.

    By \eqref{eq:tau-disk} and the triangle inequality, we have
    \begin{align*}
        |\tau_i(x) - \tau_i(y)| &= \abs{\left(\frac{r_i^2}{\overline{x} - \overline{c_i}} + c_i\right) - y} \\
        &= \frac{1}{|x - c_i|} \abs{r_i^2 - (\overline{x} - \overline{c_i}) (y - c_i)} \\
        &= \frac{1}{|x - c_i|} \abs{\left(r_i^2 - (\overline{z} - \overline{c_i}) (y - c_i)\right) - (\overline{x} - \overline{z}) (y - c_i)} \\
        &\leq \frac{1}{|x - c_i|} \left(\abs{r_i^2 - (\overline{z} - \overline{c_i}) (y - c_i)} + |x - z| |y - c_i|\right).
    \end{align*}
    Since $|z - c_i| = r_i$, we may rewrite $\overline{z} - \overline{c_i}$ as $r_i^2/(z - c_i)$ to obtain
    \begin{align*}
        |\tau_i(x) - \tau_i(y)| &\leq \frac{1}{|x - c_i|} \left(\abs{r_i^2 - r_i^2 \frac{y - c_i}{z - c_i}} + |x - z| |y - c_i|\right) \\
        &= \frac{1}{|x - c_i|} \left(r_i^2 \frac{|y - z|}{|z - c_i|} + |x - z| |y - c_i|\right)  \\
        &= \frac{1}{|x - c_i|} \left(r_i |y - z| + |x - z| |y - c_i|\right) \\
        &\leq \frac{r_i}{|x - c_i|}(|y - z| + |x - z|) \\
        &= \frac{r_i |x - y|}{|x - c_i|} \\
        &=\min\left(1, \frac{r_i}{|x - c_i|}\right)\min\left(1, \frac{r_i}{|y - c_i|}\right) |x - y|,
    \end{align*}
    as desired.
\end{proof}
\begin{lemma}\label{lem:strict-contract}
    There exists a real constant $C < 1$, depending only on $L_1, L_2, L_3$, such that for all $x, y \in S^1$, there exists $i \in \{1, 2, 3\}$ with $|\tau_i(x) - \tau_i(y)| < C|x - y|$.
\end{lemma}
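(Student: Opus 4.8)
The plan is to deduce the lemma from \cref{lem:contract} by a compactness argument that isolates a single geometric fact. Set
\[
\eta_i(x,y) = \min\!\left(1, \tfrac{r_i}{|x-c_i|}\right)\min\!\left(1, \tfrac{r_i}{|y-c_i|}\right),
\]
so that \cref{lem:contract} reads $|\tau_i(x)-\tau_i(y)| \leq \eta_i(x,y)\,|x-y|$. Since $|x-c_i| \geq |c_i|-1 > 0$ for every $x \in S^1$, each $\eta_i$ is continuous on the compact space $S^1 \times S^1$, hence so is $\eta := \min_i \eta_i$, and $\eta$ attains a maximum value $C_0 \leq 1$. If $C_0 < 1$ we are done: fixing any $C \in (C_0, 1)$ and, for distinct $x, y \in S^1$, choosing $i$ with $\eta(x,y) = \eta_i(x,y)$, we get $|\tau_i(x) - \tau_i(y)| \leq C_0|x-y| < C|x-y|$, and $C$ depends only on the $c_i$ and $r_i$, hence only on $L_1, L_2, L_3$. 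Now $\eta_i(x,y) = 1$ exactly when $|x-c_i| \leq r_i$ and $|y-c_i| \leq r_i$, i.e.\ when both $x$ and $y$ lie in the closed arc $A_i := \{z \in S^1 : |z-c_i| \leq r_i\}$. Hence $\eta(x,y) = 1$ for some $(x,y)$ if and only if $A_1 \cap A_2 \cap A_3 \neq \emptyset$, so it suffices to prove $A_1 \cap A_2 \cap A_3 = \emptyset$.

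\textbf{The geometric fact.} By \eqref{eq:tau-disk} and $|z_0 - c_i| = |c_i| > r_i$, the arc $A_i$ is precisely the set of ideal points lying on the closed far side of $L_i$: writing $H_i^+, H_i^-$ for the open half-planes bounded by $L_i$ with $z_0 \in H_i^+$, we have $A_i = \overline{H_i^-} \cap S^1$. Suppose for contradiction that $x \in A_1 \cap A_2 \cap A_3$, and let $\rho$ be the geodesic ray from $z_0$ to $x$. For each $i$ exactly one of two things happens: either $x \in H_i^-$, in which case $\rho$ meets $L_i$ at exactly one point (distinct complete geodesics meet at most once) and stays in $H_i^-$ afterwards; or $x$ is an ideal endpoint of $L_i$, in which case the complete geodesic through $z_0$ and $x$ is asymptotic to $L_i$ and hence disjoint from it, so $\rho \subseteq H_i^+$. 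Let $J$ be the set of $i$ of the second type; we rule out each value of $|J|$.

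\textbf{The cases.} If $|J| = 3$, then $x$ is a common ideal endpoint of all three lines; moving $x$ to $\infty$ in the half-plane model turns $L_1, L_2, L_3$ into three parallel vertical lines, and then $z_0$ cannot lie between every pair of them, contradicting \cref{ax:angles}. If $|J| = 0$, then $x$ lies strictly on the far side of all three lines; but by \cref{ax:angles} no point of $\DD \cup S^1$ lies strictly on the far side of all three of $L_1, L_2, L_3$ (if some $m(i,i') = \infty$ then $H_i^-$ and $H_{i'}^-$ are disjoint components of $\DD \setminus (L_i \cup L_{i'})$; if all $m(i,i')$ are finite, inspect the partition of $\DD$ cut out by the three edge-lines of $\Delta$, none of whose regions is on the far side of all three). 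If $|J| = 2$, say $J = \{i,i'\}$ and $\{k\} = \{1,2,3\}\setminus J$, then $x$ is the ideal point where $L_i$ and $L_{i'}$ meet, hence a vertex of $\Delta$; the edge of $\Delta$ on $L_i$ has $x$ as one endpoint and lies in $\overline{\Delta} \subseteq \overline{H_k^+}$, so $x \in \overline{H_k^+}$, and since $x$ is not an ideal endpoint of $L_k$ (else all three lines would be concurrent at $x$, again contradicting \cref{ax:angles}), $x$ lies strictly on the near side of $L_k$, contradicting $x \in A_k$. Finally, if $|J| = 1$, say $J = \{k\}$ and $\{i,i'\} = \{1,2,3\}\setminus J$, then $x$ is an ideal endpoint of $L_k$ with $x \in H_i^- \cap H_{i'}^-$; when $m(i,i') = \infty$ this contradicts the disjointness of $H_i^-$ and $H_{i'}^-$, and when $m(i,i') < \infty$ one shows that $x$ is reached by prolonging the $L_k$-edge of $\Delta$ past one of the two vertices $L_i \cap L_k$, $L_{i'} \cap L_k$, say past $v := L_i \cap L_k$; this prolongation crosses $L_i$ but not $L_{i'}$, and as $v$ lies on the near side of $L_{i'}$ the entire prolongation does too, so $x$ lies (strictly) on the near side of $L_{i'}$, contradicting $x \in H_{i'}^-$.

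\textbf{Main obstacle.} The delicate point is the last case, $|J| = 1$ with $m(i,i') < \infty$: one must track carefully which of the two rays of $L_k$ issuing from each vertex of $\Delta$ carries the ideal point $x$, and on which side of $L_i$ and of $L_{i'}$ that ray lies. This is where \cref{ax:angles} enters most essentially, since it pins down the location of $z_0$ relative to each pair of lines, hence the combinatorics of the triangle's vertices and edges. The cases $|J| \in \{0,2,3\}$ and the initial reduction are comparatively routine.
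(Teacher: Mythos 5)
Your reduction is essentially the paper's: by \cref{lem:contract}, everything comes down to the single geometric fact that no point of $S^1$ lies weakly on the far side of all three lines (in your notation, $A_1 \cap A_2 \cap A_3 = \emptyset$), after which a compactness argument (your maximum of the continuous function $\eta$ on $S^1\times S^1$, the paper's open cover of $S^1$ by the sets $\{x \,:\, |x-c_i| > r_i/C\}$, cf.\ \cref{fig:circle}) produces the uniform constant $C$. The paper simply asserts this geometric fact as an observation, so the substance of your proposal is your attempted proof of it, and that proof has genuine gaps.

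The case $|J|=2$ begs the question: you assert that the common ideal point $x$ of $L_i$ and $L_{i'}$ is ``hence a vertex of $\Delta$'' and that the $L_i$-edge of $\Delta$ has $x$ as an endpoint. But under your hypothesis for contradiction, $L_k$ separates $x$ from $z_0$, in which case that edge is truncated by $L_k$ and $\overline{\Delta}$ need not reach $x$ at all; whether $x \in \overline{\Delta}$ is exactly what is at stake, and \cref{ax:angles} does not hand it to you. A correct argument must constrain $L_k$: for instance, if $L_k$ is disjoint from $L_i$, the betweenness condition forces the middle component into $H_i^+$, hence $L_i \subseteq H_k^+$, so the ideal point $x$ of $L_i$ cannot lie strictly on the far side of $L_k$; and if $L_k$ crosses both $L_i$ and $L_{i'}$, send $x$ to $\infty$ so that $L_i, L_{i'}$ become vertical lines and $L_k$ a semicircle with $z_0$ in its interior, and check that the requirement that the sector containing $z_0$ have measure at most $\pi/2$ forces the center of $L_k$ to opposite sides at the two crossing points, a contradiction. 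The case $|J|=1$ with $m(i,i')<\infty$, which you yourself flag as the main obstacle, is left at ``one shows that'': it presupposes that $L_k$ meets both $L_i$ and $L_{i'}$ (true, but this needs the disjoint-pair observation just mentioned) and presupposes which prolongation of which edge of $\Delta$ carries $x$, which is again part of what must be proved. Finally, in the case $|J|=0$ with all $m(i,i')$ finite, the claim that no region cut out by the three lines lies on the far side of all three is false for three concurrent lines, so you must also rule out concurrency using the angle conditions (after which the claim does hold, e.g.\ via barycentric coordinates in the Klein model). So your framework matches the paper's and the target statement is true, but as written the case analysis does not yet establish the key fact.
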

\begin{proof}
    Observe that for all $x \in S^1$, there exists $i \in \{1, 2, 3\}$ such that $z_0$ and $x$ lie on the same side of the line $L_i$. That is, there exists $i \in \{1, 2, 3\}$ such that $|x - c_i| > r_i$. Since $S^1$ is compact, there exists a real constant $C < 1$ such that for all $x \in S^1$, there exists $i \in \{1, 2, 3\}$ with $|x - c_i| > r_i / C$. (See \cref{fig:circle}.) 

    Let $x, y \in S^1$, and choose $i \in \{1, 2, 3\}$ such that $|x - c_i| > r_i / C$. By \cref{lem:contract}, we have
    \[|\tau_i(x) - \tau_i(y)| \leq  \min\left(1, \frac{r_i}{|x - c_i|}\right)\min\left(1, \frac{r_i}{|y - c_i|}\right)|x - y| < C|x - y|,\] as desired.
\end{proof}

\begin{figure}
    \begin{center}
        \includegraphics[height=8.7cm]{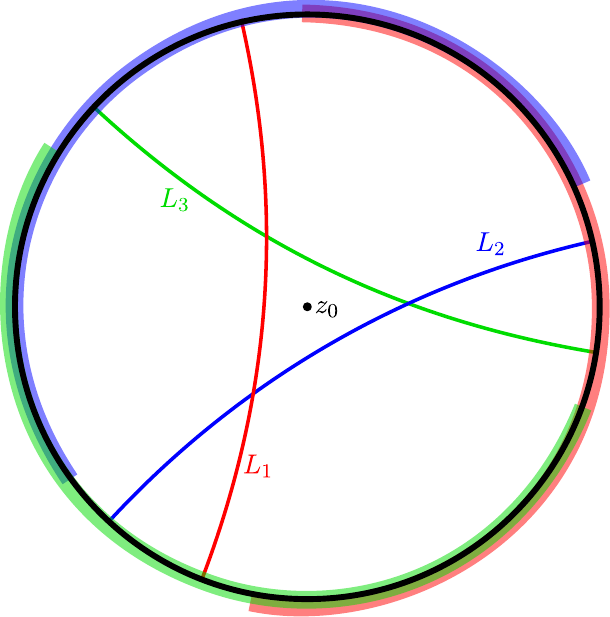}
    \end{center}
    \caption{An illustration of the first part of the proof of \cref{lem:strict-contract}. For $i = 1, 2, 3$, we have drawn the open subset $\{x \in S^1 \, | \, |x - c_i| > r_i / C\} \subseteq S^1$ in {\color{red}red}, {\color{blue}blue}, and {\color{MildGreen}green} respectively. If $C < 1$ is sufficiently close to $1$, then these sets form an open cover of $S^1$.}\label{fig:circle} 
\end{figure}

The following proof is adapted from an argument personally communicated to the authors by Amol Aggarwal. 

\begin{proof}[Proof of \cref{thm:uniqueness}]
    We use the Poincar\'e disk model; see \cref{subsec:disk}. The fact that the measure $\mu$ exists follows directly from \cref{thm:key}. To show uniqueness, let $\mu, \nu$ be two probability measures on $S^1$ such that
    \begin{equation}\label{eq:mu-stationary}\mu = \frac{1}{3}\left((\tau_1)_*\mu + (\tau_2)_*\mu + (\tau_3)_*\mu\right)\end{equation}
    and
    \begin{equation}\label{eq:nu-stationary}\nu = \frac{1}{3}\left((\tau_1)_*\nu + (\tau_2)_*\nu + (\tau_3)_*\nu\right);\end{equation} we will prove that $\mu = \nu$.

    Let $X \sim \mu$ and $Y \sim \nu$ be random variables. We wish to prove that $X$ and $Y$ are identically distributed. It suffices to prove that for every Lipschitz function $f \colon S^1 \to \R$, we have $\E[f(X)] = \E[f(Y)]$ \cite[Volume~II,~Theorem~8.3.2]{MR2267655}.

    Consider the following Markov chain $(X_m, Y_m)_{m \geq 0}$ valued in $S^1 \times S^1$. Choose $i_1, i_2, i_3, \ldots$ independently and uniformly at random from $\{1, 2, 3\}$. Define $X_0 = X$ and $Y_0 = Y$, and for all $m \geq 0$, define $X_{m + 1} = \tau_{i_{m + 1}}(X_m)$ and $Y_{m + 1} = \tau_{i_{m + 1}}(Y_m)$. (Note that $X_m = (\tau_{i_m} \circ \cdots \circ \tau_{i_1})(X_0)$ and $Y_m = (\tau_{i_m} \circ \cdots \circ \tau_{i_1})(Y_0)$; 
    contrast these equations with \eqref{eq:zm-tau}, where the $\tau_i$ are composed in the opposite order.) Observe that by \eqref{eq:mu-stationary} and \eqref{eq:nu-stationary}, we have $X_m \sim \mu$ and $Y_m \sim \nu$ for all $m \geq 0$.
    
    Let $C < 1$ be the constant from \cref{lem:strict-contract}, and let $m \geq 0$. By \cref{lem:contract}, we have $|X_{m + 1} - Y_{m + 1}| \leq |X_m - Y_m|$. Moreover, by \cref{lem:strict-contract}, conditioned on any particular value of $X_m$ and $Y_m$, there is a probability of at least $1/3$ that $|X_{m + 1} - Y_{m + 1}| < C|X_m - Y_m|$. Therefore, 
    \[\E[|X_{m + 1} - Y_{m + 1}|] \leq \frac{C + 2}{3} \E[|X_m - Y_m|].\]
    It follows that \begin{equation}\label{eq:distance-leq}\E[|X_{m} - Y_{m}|] \leq \left(\frac{C + 2}{3}\right)^m |X - Y| \leq 2 \left(\frac{C + 2}{3}\right)^m.\end{equation}
    
    Let $K$ be the Lipschitz constant for the function $f$. We compute
    \begin{align}
        \abs{\E[f(X)] - \E[f(Y)]} &= \lim_{m \to \infty}\abs{\E[f(X_m)] - \E[f(Y_m)]} \label{eq:first}\\
        &\leq \lim_{m \to \infty}\E[\abs{f(X_m) - f(Y_m)}] \label{eq:second}\\
        &\leq K \lim_{m \to \infty}\E[\abs{X_m - Y_m}] \label{eq:third}\\
        &\leq K \lim_{m \to \infty}2 \left(\frac{C + 2}{3}\right)^m \label{eq:fourth} \\
        &= 0 \label{eq:fifth}.
    \end{align}
    In \eqref{eq:first}, we used the fact that for all $m \geq 0$, the random variables $X$ and $X_m$ are identically distributed, and the random variables $Y$ and $Y_m$ are identically distributed. In \eqref{eq:second}, we used the linearity of expectation and Jensen's inequality, which implies that $\abs{\E[Z]} \leq \E[\abs{Z}]$ for any random variable $Z$. In \eqref{eq:third}, we used the fact that $f$ is Lipschitz. In \eqref{eq:fourth}, we directly applied \eqref{eq:distance-leq}. In \eqref{eq:fifth}, we used the fact that $C < 1$. It follows that $\E[f(X)] = \E[f(Y)]$, as desired.
\end{proof}

\section{A worked example: \texorpdfstring{$PGL_2(\Z)$}{PGL₂(ℤ)}}\label{sec:pgl2}
In the remainder of this article, we demonstrate how to solve the equation \eqref{eq:stationary} of \cref{thm:key,thm:uniqueness} in a particular important case. In the setup of \cref{subsec:triangle}, we choose \begin{align*}
    L_1 &= \left\{z \in \HH \, \middle| \, \operatorname{Re}(z) = -\frac{1}{2}\right\} \\
    L_2 &= \left\{z \in \HH \, \middle| \, \abs{z} = 1\right\} \\
    L_3 &= \left\{z \in \HH \, \middle| \, \operatorname{Re}(z) = 0\right\}
\end{align*}
and choose $z_0$ to be an arbitrary complex number with $-1/2 < \operatorname{Re}(z_0) < 0$ and $\abs{z_0} > 1$. Observe that these choices satisfy \cref{ax:angles} with $m(1, 2) = 3$, $m(2, 3) = 2$, and $m(1, 3) = \infty$. The corresponding triangle group $W$ is thus the $(2, 3, \infty)$ triangle group.

The simple generators $s_1$, $s_2$, and $s_3$ of $W$ act on $\HH$ by 
\begin{align*}
s_1(z) &= -1 - \overline{z} \\
s_2(z) &= 1/\overline{z} \\
s_3(z) &= -\overline{z}.
\end{align*}
The $(2, 3, \infty)$ triangle group $W$ is isomorphic to the \dfn{projective general linear group} $PGL_2(\Z)$, which is the quotient of the general linear group $GL_2(\Z)$ by the subgroup $\{I_2, -I_2\}$, where $I_2$ is the $2\times 2$ identity matrix. When there is no risk of confusion, we will use the notation \[\begin{bmatrix}a & b \\ c & d\end{bmatrix}\] to denote both a matrix in $GL_2(\Z)$ and its class in $PGL_2(\Z)$.

The proper action of $PGL_2(\Z)$ on the hyperbolic plane is given by fractional linear transformations; explicitly, for $z\in\HH$, we have \[\begin{bmatrix}a & b \\ c & d\end{bmatrix} (z) = \begin{cases}\displaystyle\frac{az + b}{cz + d} & \mbox{if $ad - bc = 1$} \\\noalign{\vskip5pt}\displaystyle\frac{a\overline{z} + b}{c \overline{z} + d}& \mbox{if $ad - bc = -1$}\end{cases}.\]  This action is conformal if $ad - bc = 1$ and anticonformal if $ad - bc = -1$.

\begin{figure}
    \begin{center}
        \includegraphics[width=
        \linewidth]{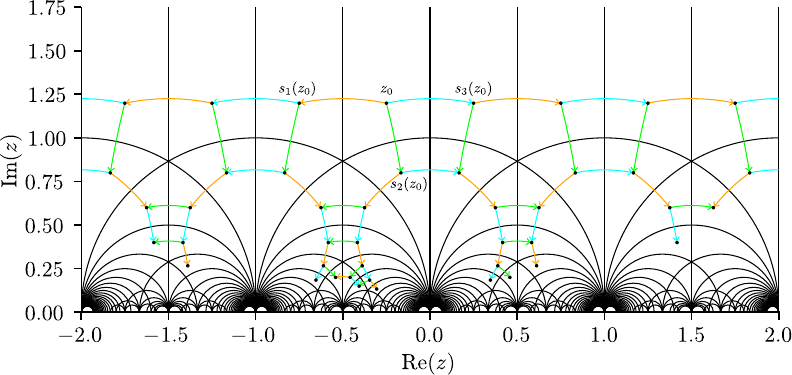}
    \end{center}
    \caption{\label{fig:pgl2}The transition diagram of the reduced random walk in $PGL_2(\Z)$.} 
\end{figure}

\section{The interrobang function}\label{sec:interro}
In this section, we define an auxiliary function $\interro \colon [0,1] \to \R$, called the interrobang function\footnote{The symbol $\interro$ is a ligature of a question mark and an exclamation point called an \emph{interrobang}. It was invented in 1962 by Martin K. Speckter.}, which is related to Minkowski's classical question-mark function $\Q \colon [0, 1] \to \R$. In \cref{thm:pgl2-cumulative}, the interrobang function will allow us to explicitly describe the cumulative distribution function of $\mu$ in the case $W \simeq PGL_2(\Z)$ described in \cref{sec:pgl2}, and thus to provide a complete description of the limiting behavior of the reduced random walk in that case. We encourage the reader to read the statement of \cref{thm:pgl2-cumulative} before proceeding.

Let us introduce some standard notation and definitions. For all $x \in \R$, let $\{x\} = x - \lfloor x \rfloor$ denote the fractional part of $x$. We say that a rational number is \dfn{dyadic} if, when written in lowest terms, its denominator is a power of $2$. A \dfn{quadratic irrational} is an irrational number that is a root of a nonzero quadratic polynomial with rational coefficients.

\begin{definition}
    The \dfn{height function} $\height \colon \Q \to \N$ is given by \[\height\left(\frac{a}{b}\right) = \abs{a} + \abs{b}\] for all integers $a, b$ with $\gcd(a, b) = 1$.
\end{definition}
\begin{remark}
    The function $\height$ is slightly different from the classical height function $h(a / b) = \max\{\abs{a}, \abs{b}\}$ commonly used in arithmetic geometry.
\end{remark}

We also recall one recursive definition of Minkowski's question-mark function on $[0, 1] \cap \Q$.
\begin{definition}[{\cite[Section~4]{MR7929}}]\label{def:question}
    \dfn{Minkowski's question-mark function} $\? : [0, 1] \cap \Q \to \R$ is given by
\[\?(x) = \begin{cases}
    0 & \mbox{if $x = 0$} \\\noalign{\vskip5pt}
    \displaystyle\frac{1}{2^{\lfloor 1 / x \rfloor}}\left(2 -  \?\left(\left\{\frac{1}{x}\right\}\right)\right) & \mbox{otherwise}
\end{cases}.\]
\end{definition}

The interrobang function has a similar, but more complicated, definition. We start by defining the interrobang function on $[0, 1] \cap \Q$; we will later extend it to the whole interval $[0, 1]$ in \cref{thm:interro-real}.

\begin{definition}\label{def:interro}
    The \dfn{interrobang function} $\interro \colon [0, 1] \cap \Q \to \R$ is defined recursively as follows:
    \begin{equation}\label{eq:interro}\interro(x) =
    \begin{cases}
        0 & \mbox{if $x = 0$} \\\noalign{\vskip5pt}
        \displaystyle\frac{1}{4^{\lfloor 1 / x \rfloor}}\left(1 - 2 \interro\left(\left\{\frac{1}{x}\right\}\right)\right) & \mbox{if $0 < x \leq \frac{1}{2}$} \\\noalign{\vskip5pt}
        \displaystyle\frac{3}{8} - \frac{3}{4} \interro\left(\frac{1}{x} - 1\right) - \frac{1}{2} \interro(1 - x) & \mbox{if $\frac{1}{2} < x \leq 1$}
    \end{cases}.
    \end{equation}
    Note that this recursion is well founded by height. That is, for all $x \in [0, 1] \cap \Q$,
    \begin{align*}
        &\text{if $0 < x \leq \frac{1}{2}$, then $\height\left(\left\{\frac{1}{x}\right\}\right) < \height(x)$; and}\\
        &\text{if $\frac{1}{2} < x \leq 1$, then $\height\left(\frac{1}{x} - 1\right) < \height(x)$ and $\height(1 - x) < \height(x)$.}
    \end{align*}
    Thus, \eqref{eq:interro} defines a unique function $\interro \colon [0, 1] \cap \Q \to \R$.
\end{definition}
Minkowski's question-mark function is known to have the following remarkable properties: it is strictly increasing, it can be extended to a continuous function on the real interval $[0, 1]$, it maps rational numbers to dyadic rational numbers, and it maps quadratic irrational numbers to rational numbers \cite[Section~4]{MR7929}. In what follows, we will prove that the interrobang function enjoys these properties as well.

\begin{figure}
    \begin{center}
        \includegraphics[width=
        \linewidth]{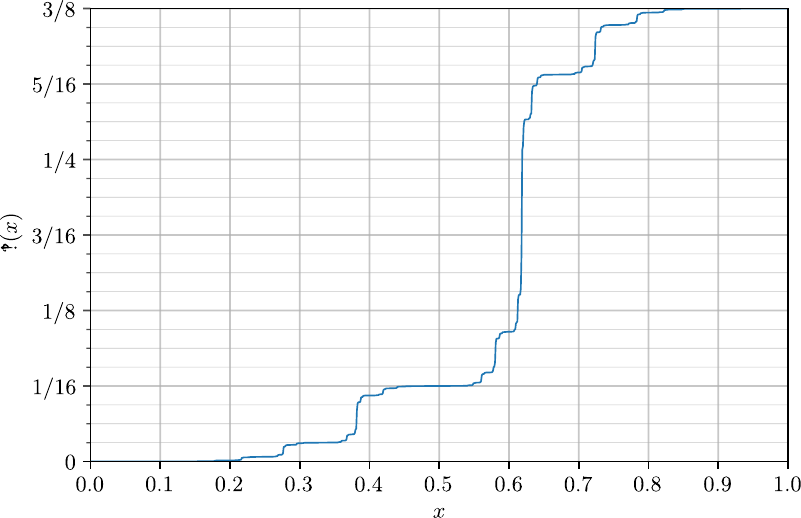}
    \end{center}
    \caption{A plot of the interrobang function.}\label{fig:interrobang_plot}
\end{figure}
\subsection{Basic properties}
\begin{lemma}\label{lem:values}
    We have $\interro(0) = 0$ and $\interro(1) = 3/8$. Moreover, $\interro(1/n) = 1/4^n$ for all $n \geq 2$.
\end{lemma}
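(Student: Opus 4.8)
The plan is to evaluate $\interro$ at the three prescribed inputs directly from the recursion in \cref{def:interro}, handling the base case, the endpoint $1$, and the family $1/n$ in turn.

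First I would dispatch $\interro(0) = 0$, which is literally the first case of \eqref{eq:interro}. Next, for $\interro(1)$ I would apply the third case of \eqref{eq:interro} with $x = 1$: here $1/x - 1 = 0$ and $1 - x = 0$, so $\interro(1) = \frac{3}{8} - \frac{3}{4}\interro(0) - \frac{1}{2}\interro(0) = \frac{3}{8}$.

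The substantive part is the claim $\interro(1/n) = 1/4^n$ for $n \geq 2$. Since $0 < 1/n \leq 1/2$, the middle case of \eqref{eq:interro} applies. With $x = 1/n$ we have $\lfloor 1/x \rfloor = \lfloor n \rfloor = n$ and $\{1/x\} = \{n\} = 0$, so
\[
\interro\!\left(\frac{1}{n}\right) = \frac{1}{4^{n}}\left(1 - 2\interro(0)\right) = \frac{1}{4^{n}},
\]
using $\interro(0) = 0$. This is a direct one-line computation, not an induction, because the argument of the nested $\interro$ collapses to $0$.

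The main (and only) obstacle is essentially bookkeeping: making sure the correct branch of \eqref{eq:interro} is selected for each input and that the floor and fractional-part values are computed correctly when $x$ is a unit fraction. There is no real difficulty here; the recursion is designed so that these particular inputs land on terminating branches immediately.
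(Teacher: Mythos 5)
Your proof is correct and matches the paper, whose proof is simply that the lemma follows directly from \cref{def:interro}; you have merely spelled out the branch selections and the evaluations $\lfloor 1/x\rfloor = n$, $\{1/x\} = 0$ explicitly, which are exactly right.
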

\begin{proof}
    This follows directly from \cref{def:interro}.
\end{proof}
\begin{lemma}\label{lem:piecewise}
    Let $n$ be a positive integer, and let $x$ be a rational number satisfying ${1/(n+1) \leq x \leq 1/n}$.
    \begin{enumerate}[(i)]
        \item \label{item:n-gt-1} If $n > 1$, then \[\interro(x) = \frac{1}{4^n}\left(1 - 2 \interro\left(\frac{1}{x} - n\right)\right).\]
        \item \label{item:n-eq-1} If $n = 1$, then \[\interro(x) = \displaystyle\frac{3}{8} - \frac{3}{4} \interro\left(\frac{1}{x} - 1\right) - \frac{1}{2} \interro(1 - x).\]
    \end{enumerate}
\end{lemma}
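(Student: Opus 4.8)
The plan is to reduce both claims directly to the defining recurrence~\eqref{eq:interro}, distinguishing according to whether $x$ lies in the open interval $(1/(n+1),\,1/n)$ or at one of its two endpoints. On the open interval, the floor $\lfloor 1/x\rfloor$ and the fractional part $\{1/x\}$ behave exactly as the recurrence expects, so there is nothing to prove there; the only real content of the lemma is that the two ``piecewise'' formulas remain valid at the boundary points $x = 1/n$ and $x = 1/(n+1)$, where $\lfloor 1/x\rfloor$ jumps. At those points I will fall back on the explicit values $\interro(0) = 0$ and $\interro(1) = 3/8$ recorded in \cref{lem:values}.

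For part~(i), I assume $n \geq 2$, so that $0 < x \leq 1/n \leq 1/2$ and the middle branch of~\eqref{eq:interro} governs $\interro(x)$. If $1/(n+1) < x < 1/n$, then $n < 1/x < n+1$, hence $\lfloor 1/x\rfloor = n$ and $\{1/x\} = 1/x - n$, and the asserted identity is literally the middle branch. If $x = 1/n$, then $\lfloor 1/x\rfloor = n$ and $\{1/x\} = 0 = 1/x - n$, so both sides collapse to $\tfrac{1}{4^n}\bigl(1 - 2\interro(0)\bigr) = 4^{-n}$. The only endpoint requiring a short computation is $x = 1/(n+1)$: here $\lfloor 1/x\rfloor = n+1$ and $\{1/x\}=0$, so~\eqref{eq:interro} gives $\interro(x) = 4^{-(n+1)}\bigl(1 - 2\interro(0)\bigr) = 4^{-(n+1)}$, while the right-hand side of~(i) equals $\tfrac{1}{4^n}\bigl(1 - 2\interro(1)\bigr) = \tfrac{1}{4^n}\cdot\tfrac14 = 4^{-(n+1)}$ once we substitute $\interro(1) = 3/8$. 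The two values agree, proving~(i).

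For part~(ii), I assume $n = 1$ and $1/2 \leq x \leq 1$. When $1/2 < x \leq 1$, the bottom branch of~\eqref{eq:interro} is exactly the claimed identity, so there is nothing to do. The remaining case $x = 1/2$ is again a one-line check: the middle branch of~\eqref{eq:interro} gives $\interro(1/2) = 4^{-2}(1 - 2\interro(0)) = 1/16$, and by \cref{lem:values} together with $\interro(1)=3/8$ we get $\tfrac38 - \tfrac34\cdot\tfrac38 - \tfrac12\cdot\tfrac1{16} = \tfrac1{16}$, so the claimed formula holds there too. I expect no substantive obstacle in this argument; the only thing one must not overlook is the discontinuity of $\lfloor 1/x\rfloor$ and $\{1/x\}$ at $x = 1/n$ and $x = 1/(n+1)$, which is precisely why the endpoint cases must be treated separately and why the explicit values from \cref{lem:values} are invoked.
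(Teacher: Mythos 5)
Your proposal is correct and follows essentially the same route as the paper: on $1/(n+1) < x \leq 1/n$ the claimed formula is just the defining recurrence \eqref{eq:interro}, and the only genuine check is at the endpoint $x = 1/(n+1)$ (and $x=1/2$ for part (ii)), which the paper also dispatches via the explicit values $\interro(0)=0$, $\interro(1)=3/8$, $\interro(1/n)=4^{-n}$ from \cref{lem:values}. Your endpoint computations agree with the paper's intended (but unwritten) verification, so there is nothing further to add.
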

\begin{proof}
    If $1/(n + 1) < x \leq 1/n$, then this is exactly \eqref{eq:interro}. Otherwise, we have $x = 1/(n + 1)$, and the lemma easily follows from \cref{lem:values}.
\end{proof}

\subsection{Analytic properties}

In this subsection, we prove that $\interro$ extends to a continuous, increasing function on the real interval $[0, 1]$.

\begin{lemma}\label{lem:bounds}
     Let $n$ be a positive integer. For all rational numbers $x$ satisfying ${1/(n + 1) < x < 1/n}$, we have $\interro(1/(n + 1)) < \interro(x) < \interro(1/n)$.
\end{lemma}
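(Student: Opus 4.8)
The plan is to prove \cref{lem:bounds} by strong induction on the height $\height(x)$, but with a strengthened inductive hypothesis that I establish simultaneously: for every rational $y\in[0,1]$ one has $0\le\interro(y)\le 3/8$, with $\interro(y)=0$ only when $y=0$ and $\interro(y)=3/8$ only when $y=1$, and moreover $\interro(y)\le 1/16$ whenever $y\le 1/2$. These extra bounds are not optional — the $n=1$ case below needs precisely the estimate $\interro\le 1/16$ on $[0,1/2]$ in order to close, and it needs the strict equality cases to recover a strict inequality. The values $\interro(0)=0$, $\interro(1)=3/8$, and $\interro(1/m)=1/4^m$ for $m\ge 2$ from \cref{lem:values} handle the base cases and, more generally, dispose of all $x$ of the form $1/m$; so in the inductive step I may assume $x$ is strictly interior to a unique interval, i.e.\ $1/(n+1)<x<1/n$ where $n=\lfloor 1/x\rfloor\ge 1$ and $\{1/x\}=1/x-n\in(0,1)$.

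For the case $n\ge 2$, \cref{lem:piecewise}(i) gives $\interro(x)=\tfrac1{4^{n}}\bigl(1-2\interro(1/x-n)\bigr)$. Since $1/x-n$ is a rational in $(0,1)$ of strictly smaller height, the inductive hypothesis gives $0<\interro(1/x-n)<3/8$, hence $\tfrac14<1-2\interro(1/x-n)<1$, hence $1/4^{n+1}<\interro(x)<1/4^{n}$. By \cref{lem:values} these endpoints are exactly $\interro(1/(n+1))$ and $\interro(1/n)$, which is the conclusion of \cref{lem:bounds}; the strengthened bounds hold too, since $0<1/4^{n+1}$ and $1/4^{n}\le 1/16$. (The leftover instances of the bound $\interro\le 1/16$ on $[0,1/2]$ — when $x$ is $1/m$ or is interior to $[1/(m+1),1/m]$ with $m\ge 2$ — follow the same way, using only $\interro(\{1/x\})\ge 0$ from the hypothesis, or \cref{lem:values}.)

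The crux is the case $n=1$, i.e.\ $1/2<x<1$. Here \cref{lem:piecewise}(ii) gives $\interro(x)=\tfrac38-\tfrac34\interro(1/x-1)-\tfrac12\interro(1-x)$, where $1/x-1\in(0,1)$ and $1-x\in(0,1/2)$ are rationals of smaller height, so the hypothesis supplies $0<\interro(1/x-1)<3/8$ and $0\le\interro(1-x)\le 1/16$. Since $\interro(1/x-1)>0$, the two subtracted terms have positive sum and $\interro(x)<3/8=\interro(1)$. For the matching lower bound the point is the tight identity
\[\tfrac34\cdot\tfrac38+\tfrac12\cdot\tfrac1{16}=\tfrac5{16}=\tfrac38-\tfrac1{16},\]
so that $\tfrac34\interro(1/x-1)+\tfrac12\interro(1-x)<\tfrac5{16}$ — and strictly so, because $\interro(1/x-1)<3/8$ is strict — giving $\interro(x)>1/16=\interro(1/2)$. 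This is \cref{lem:bounds} for $n=1$, and the strengthened bounds hold trivially there since $x>1/2$.

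I expect the main obstacle to be not any single computation but rather seeing in advance that inducting on the statement of \cref{lem:bounds} by itself does not close: one must carry the global bound $0\le\interro\le 3/8$ and, crucially, the half-interval bound $\interro\le 1/16$ on $[0,1/2]$, and one must track the strict equality cases $\interro(y)=0\iff y=0$ and $\interro(y)=3/8\iff y=1$, because in the $n=1$ case the lower estimate is exactly tight and its strictness is salvaged only from $\interro(1/x-1)<3/8$.
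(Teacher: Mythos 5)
Your proof is correct and follows essentially the same route as the paper: strong induction on $\height(x)$, the same case split $n\ge 2$ versus $n=1$ via \cref{lem:piecewise}, and the same numerical estimates culminating in the tight bound $\frac{3}{8}-\frac{3}{4}\cdot\frac{3}{8}-\frac{1}{2}\cdot\frac{1}{16}=\frac{1}{16}$. The ``strengthened hypothesis'' you insist is indispensable is really just the lemma quantified over all $n$ together with \cref{lem:values} (which pins down $\interro$ at the points $0$, $1$, and $1/m$), which is exactly what the paper's inductive hypothesis uses implicitly, so the difference is presentational rather than mathematical.
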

\begin{proof}
    We use strong induction on $\height(x)$. If $n > 1$, then by the inductive hypothesis, we have \[0 < \interro\left(\frac{1}{x} - n\right) < \interro(1) = \frac{3}{8}.\] Hence, by \cref{lem:piecewise}, the function value \[\interro(x) = \frac{1}{4^{n}}\left(1 - 2 \interro\left(\frac{1}{x} - n\right)\right)\] satisfies \[\frac{1}{4^n}\left(1 - 2 \cdot \frac{3}{8}\right) < \interro(x) < \frac{1}{4^n} \cdot (1 - 2 \cdot 0).\] By \cref{lem:values}, this simplifies to the desired inequality.

    On the other hand, if $n = 1$, then by the inductive hypothesis, we have \[0 < \interro\left(\frac{1}{x} - 1\right) < \interro(1) = \frac{3}{8}\] and \[0 < \interro(1 - x) < \interro\left(\frac{1}{2}\right) = \frac{1}{16}.\] Hence, by \eqref{eq:interro}, the function value \[\interro(x) = \frac{3}{8} - \frac{3}{4} \interro\left(\frac{1}{x} - 1\right) - \frac{1}{2} \interro(1 - x)\] satisfies
    \[\frac{3}{8} - \frac{3}{4} \cdot \frac{3}{8} - \frac{1}{2} \cdot \frac{1}{16} < \interro(x) < \frac{3}{8} - \frac{3}{4} \cdot 0 - \frac{1}{2} \cdot 0.\]
    By \cref{lem:values}, this simplifies to the desired inequality.
\end{proof}

\begin{proposition}\label{prop:interro-increasing}
    The function $\interro$ is strictly increasing on $[0, 1] \cap \Q$. That is, for all rational numbers $x, y$ with $0 \leq x < y \leq 1$, we have $\interro(x) < \interro(y)$.
\end{proposition}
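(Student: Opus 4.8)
The plan is to run a strong induction on $\height(x)+\height(y)$, reducing each comparison $\interro(x)<\interro(y)$ either to comparisons between arguments of strictly smaller total height (via the recursion of \cref{def:interro}, in the form given by \cref{lem:piecewise}) or to the already-determined behavior of $\interro$ on the reference set $E=\{0\}\cup\{1/k:k\ge 1\}$.

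Before the induction I would record two observations that need no induction. First, by \cref{lem:values} the restriction of $\interro$ to $E$ is strictly increasing: listing $E$ in increasing order as $0<\cdots<\frac{1}{k+1}<\frac{1}{k}<\cdots<\frac12<1$, the corresponding values $0<\cdots<4^{-(k+1)}<4^{-k}<\cdots<\frac{1}{16}<\frac38$ increase strictly. Second, combining this with \cref{lem:bounds}, for every $u\in[0,1]\cap\Q$ and every $e\in E$ we have $\interro(u)<\interro(e)$ when $u<e$ and $\interro(u)>\interro(e)$ when $u>e$: if $u\notin E$ it lies in a unique open interval $(1/(m+1),1/m)$, \cref{lem:bounds} pins $\interro(u)$ strictly between $\interro(1/(m+1))$ and $\interro(1/m)$, and the first observation then settles the comparison with $e$ lying on either side.

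Now let $0\le x<y\le 1$ be rational, assume the claim for all pairs of smaller total height, and split into two cases. If some $e\in E$ satisfies $x<e<y$ (which always happens when $x=0$), the second observation gives $\interro(x)<\interro(e)<\interro(y)$ with no appeal to the inductive hypothesis. Otherwise an elementary argument with $\lfloor 1/x\rfloor$ shows that $x$ and $y$ lie in a common closed interval $[1/(n+1),1/n]$, and I would invoke \cref{lem:piecewise}. When $n\ge 2$ we get $\interro(x)=4^{-n}\bigl(1-2\interro(1/x-n)\bigr)$ and the analogous identity for $y$; since $x<y$ forces $1/x-n>1/y-n$, with both lying in $[0,1]\cap\Q$ and of strictly smaller height than $x$ and $y$ respectively (the well-foundedness recorded in \cref{def:interro}, the endpoint cases $1/x-n\in\{0,1\}$ being immediate), the inductive hypothesis gives $\interro(1/y-n)<\interro(1/x-n)$, and applying the decreasing affine map $t\mapsto 4^{-n}(1-2t)$ yields $\interro(x)<\interro(y)$. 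When $n=1$ and $x>1/2$ the same scheme works with the three-term identity $\interro(x)=\frac38-\frac34\interro(1/x-1)-\frac12\interro(1-x)$: as $x$ increases both $1/x-1$ and $1-x$ decrease, and both have strictly smaller height than $x$ (similarly for $y$), so two applications of the inductive hypothesis finish the case. The one remaining possibility is $n=1$ with $x=1/2$; there the recursion does not decrease the height (the term $\interro(1-x)=\interro(1/2)$ has the same height as $x$), but $x=1/2\in E$ and $y\in(1/2,1]$, so the second observation applies directly.

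I expect the main difficulty to be organizational rather than conceptual: one must verify that the recursive reductions genuinely decrease $\height(x)+\height(y)$ in every subcase, including the endpoint subcases where $x$ or $y$ equals $1/n$ or $1/(n+1)$ and $\{1/x\}$ collapses to $0$ or $1$, and one must single out the pair $\{1/2,\,y\}$, where the recursion fails to decrease the height, and dispatch it separately. Beyond \cref{lem:values,lem:piecewise,lem:bounds} no new idea is required; everything else is a short sign computation.
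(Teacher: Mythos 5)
Your proposal is correct and follows essentially the same route as the paper: induction on $\height(x)+\height(y)$, with \cref{lem:values,lem:bounds} handling pairs separated by a point $1/k$ and \cref{lem:piecewise} plus the inductive hypothesis handling pairs in a common interval. The only difference is organizational: the paper assigns every pair with $x \leq 1/n \leq y$ to the first case, so its remaining case has strict containment $1/(n+1) < x < y < 1/n$ and the endpoint bookkeeping (including your special pair $x = 1/2 < y$, where the $n=1$ recursion does not drop the height) never arises.
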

\begin{proof}
    We use induction on $\height(x) + \height(y)$. If $x = 0$ or if there exists a positive integer $n$ such that $x \leq 1/n \leq y$, then the proposition follows from \cref{lem:bounds}.
    
    Otherwise, there exists a positive integer $n$ such that $1/(n + 1) < x \leq y < 1/n$. If $n > 1$, then we have \[\interro\left(\frac{1}{y} - n\right) < \interro\left(\frac{1}{x} - n\right)\] by the inductive hypothesis. Hence, by \cref{lem:piecewise}, we have
    \[\interro(x) = \frac{1}{4^n} \left(1 - 2\interro\left(\frac{1}{x} - n\right)\right) < \frac{1}{4^n} \left(1 - 2\interro\left(\frac{1}{y} - n\right)\right) = \interro(y),\]
    as desired.

    On the other hand, if $n = 1$, then we have \[\interro\left(\frac{1}{y} - 1\right) < \interro\left(\frac{1}{x} - 1\right)\] and \[\interro\left(1 - y\right) < \interro\left(1 - x\right)\] by the inductive hypothesis. Hence, by \cref{lem:piecewise}, we have
    \[\interro(x) = \frac{3}{8} - \frac{3}{4} \interro\left(\frac{1}{x} - 1\right) - \frac{1}{2} \interro(1 - x) < \frac{3}{8} - \frac{3}{4} \interro\left(\frac{1}{y} - 1\right) - \frac{1}{2} \interro(1 - y) = \interro(y),\]
    as desired.
\end{proof}

Our next goal is to prove that $\interro$ is uniformly continuous on $[0, 1] \cap \Q$. We start by introducing the following definition.

\begin{definition}\label{def:saturated}
    Let $a \leq b$ be real numbers, let $\TT \subseteq \R$, and let $f \colon \TT \to \R$. For $\epsilon > 0$, we say that $f$ is \dfn{$\epsilon$-saturated} on $[a, b]$ if there exist $x_0, \ldots, x_n \in \TT$ such that \[a = x_0 < \cdots < x_n = b\] and $\abs{f(x_{i + 1}) - f(x_{i})} < \epsilon$ for $0 \leq i < n$.
\end{definition}

\begin{lemma}\label{lem:uniformly-continuous-of-saturated}
    Let $a \leq b$ be real numbers, let $\TT \subseteq \R$, and let $f \colon \TT \to \R$. If $f$ is monotone (increasing or decreasing) on $[a, b] \cap \TT$ and $f$ is $\epsilon$-saturated on $[a, b]$ for all $\epsilon > 0$, then $f$ is uniformly continuous on $[a, b] \cap \TT$.
\end{lemma}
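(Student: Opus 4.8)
The plan is to prove the $\epsilon$--$\delta$ statement directly, using $\epsilon$-saturation to produce a finite "scaffold" of points and monotonicity to control $f$ between scaffold points. Fix $\epsilon > 0$; I want to produce $\delta > 0$ such that $\abs{x - y} < \delta$ implies $\abs{f(x) - f(y)} < \epsilon$ for all $x, y \in [a,b] \cap \TT$. First I would apply the $\epsilon/3$-saturation hypothesis (say) to get points $a = x_0 < x_1 < \cdots < x_n = b$ in $\TT$ with $\abs{f(x_{i+1}) - f(x_i)} < \epsilon/3$ for each $i$. Then set $\delta = \min_{0 \leq i < n}(x_{i+1} - x_i) > 0$; this is positive because it is a minimum of finitely many positive numbers.

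Now suppose $x, y \in [a,b] \cap \TT$ with $x < y$ and $y - x < \delta$. By the choice of $\delta$, the interval $[x, y]$ is too short to contain two distinct scaffold points, so either $x$ and $y$ lie in a common closed interval $[x_i, x_{i+1}]$, or there is exactly one scaffold point $x_j$ with $x \leq x_j \leq y$ and then $x \in [x_{j-1}, x_j]$ and $y \in [x_j, x_{j+1}]$. In the first case, monotonicity (WLOG increasing) gives $f(x_i) \leq f(x) \leq f(y) \leq f(x_{i+1})$, hence $\abs{f(x) - f(y)} \leq \abs{f(x_{i+1}) - f(x_i)} < \epsilon/3 < \epsilon$. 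In the second case, $f(x)$ lies between $f(x_{j-1})$ and $f(x_j)$ while $f(y)$ lies between $f(x_j)$ and $f(x_{j+1})$, so $\abs{f(x) - f(y)} \leq \abs{f(x_{j+1}) - f(x_{j-1})} \leq \abs{f(x_{j+1}) - f(x_j)} + \abs{f(x_j) - f(x_{j-1})} < 2\epsilon/3 < \epsilon$. (The decreasing case is symmetric.) This gives uniform continuity.

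I expect the only real subtlety to be bookkeeping: making sure the scaffold spacing $\delta$ genuinely forces $x, y$ into at most two consecutive scaffold cells, and handling boundary cases where $x$ or $y$ equals some $x_i$ (these are harmless since the inequalities are non-strict on the monotonicity side). Using $\epsilon/3$ rather than $\epsilon$ in the saturation call is the clean way to absorb the factor of $2$ coming from straddling a scaffold point; alternatively one could invoke $(\epsilon/2)$-saturation and argue slightly differently. There is no hard analytic content here — the statement is essentially the observation that a monotone function with arbitrarily fine "oscillation-controlled partitions" of its domain closure has no jumps — but it must be written carefully because $\TT$ need not be all of $[a,b]$ and $f$ need not be defined at the scaffold-straddling points other than the $x_i$ themselves (though in our application $x_i \in \TT$ by construction).
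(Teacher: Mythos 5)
Your proof is correct and follows essentially the same route as the paper: take a saturation scaffold for a fraction of $\epsilon$, let $\delta$ be the minimum gap between scaffold points, and use monotonicity to bound $\abs{f(x)-f(y)}$ by at most two consecutive scaffold increments (the paper uses $\epsilon/2$ and phrases the case analysis as $x,y\in[x_i,x_{i+2}]$, while you use $\epsilon/3$ with an explicit two-case split, a cosmetic difference). The boundary bookkeeping you flag is handled in the paper by simply assuming $n>1$, and your version is equally sound.
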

\begin{proof}
    Let $\epsilon > 0$. We wish to prove that there exists $\delta > 0$ such that $\abs{f(x) - f(y)} < \epsilon$ for all $x, y \in [a, b] \cap \TT$ satisfying $\abs{x - y} < \delta$. 
    
    By assumption, $f$ is $(\epsilon/2)$-saturated on $[a, b]$, which means that there exist $x_0, \ldots, x_n \in \TT$ such that $a = x_0 < \cdots < x_n = b$ and $\abs{f(x_{i + 1}) - f(x_{i})} < \epsilon / 2$ for $i = 0, \ldots, n - 1$. We may assume, by replacing $\epsilon$ with a smaller constant, that $n > 1$. Let \[\delta = \min\{x_{i + 1} - x_i \, | \, 0 \leq i < n\}.\]
    Then, for all $x, y \in [a, b] \cap \TT$ with $\abs{x - y} < \delta$, there exists $i$ such that $x, y \in [x_i, x_{i+2}]$. Since $f$ is monotone, this implies that 
    \begin{align*}
    \abs{f(x) - f(y)} &\leq \abs{f(x_{i + 2}) - f(x_i)} \\
    &\leq \abs{f(x_{i + 2}) - f(x_{i + 1})} + \abs{f(x_{i + 1}) - f(x_i)} \\
    &< \frac{\epsilon}{2} + \frac{\epsilon}{2} \\
    &= \epsilon,
    \end{align*}
    as desired.
\end{proof}

\begin{lemma}\label{lem:saturated-properties}
    Let $a \leq b \leq c$ be real numbers, let $\TT \subseteq \R$, and let $f, g \colon \TT \to \R$ be functions. Let $\epsilon > 0$, and let $C$ be a real constant.
    \begin{lemenum}
        \item \label{item:saturated-if-abs-lt} If $\abs{f(b) - f(a)} < \epsilon$, then $f$ is $\epsilon$-saturated on $[a, b]$.
        \item \label{item:saturated-upward} If $f$ is $\epsilon$-saturated on $[a, b]$, then $f$ is $\epsilon'$-saturated on $[a, b]$ for all $\epsilon' > \epsilon$.
        \item \label{item:saturated-add-const} If $f$ is $\epsilon$-saturated on $[a, b]$, then $f + C$ is also $\epsilon$-saturated on $[a, b]$.
        \item \label{item:saturated-mul-const} If $f$ is $\epsilon$-saturated on $[a, b]$, then $Cf$ is $(\abs{C}\epsilon)$-saturated on $[a, b]$.
        \item \label{item:saturated-transitive} If $f$ is $\epsilon$-saturated on both $[a, b]$ and $[b, c]$, then $f$ is $\epsilon$-saturated on $[a, c]$.
        \item \label{item:saturated-comp} Let $U \subseteq \R$, and let $h \colon U \to \TT$ be a monotone (increasing or decreasing) function. If $f \circ h$ is $\epsilon$-saturated on $[a, b]$, then $f$ is $\epsilon$-saturated on $[h(a), h(b)]$.
        \item \label{item:saturated-add} Suppose that $f$ and $g$ are each either monotone increasing or monotone decreasing on $[a, b] \cap \TT$. Let $\epsilon, \epsilon' > 0$. If $f$ is $\epsilon$-saturated on $[a, b]$ and $g$ is $\epsilon'$-saturated on $[a, b]$, then $f + g$ is $(\epsilon + \epsilon')$-saturated on $[a, b]$.
    \end{lemenum}
\end{lemma}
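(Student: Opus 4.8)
The plan is to derive all seven parts directly from \cref{def:saturated}, handling them in roughly increasing order of difficulty; only part~(vii) requires any genuine idea.

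Parts (i)--(v) are immediate manipulations of partitions. For (i) I would use the two-point partition $x_0=a$, $x_1=b$ (both points lie in $\TT$, since $f(a)$ and $f(b)$ are assumed to exist), whose single increment inequality $|f(x_1)-f(x_0)|<\epsilon$ is exactly the hypothesis. For (ii) the same witnessing points work, because $|f(x_{i+1})-f(x_i)|<\epsilon\le\epsilon'$. For (iii) I note that $(f+C)(x_{i+1})-(f+C)(x_i)=f(x_{i+1})-f(x_i)$, so any partition witnessing $\epsilon$-saturation of $f$ witnesses it for $f+C$. For (iv), assuming $C\ne 0$ so that $|C|\epsilon>0$, I multiply through: $|(Cf)(x_{i+1})-(Cf)(x_i)|=|C|\,|f(x_{i+1})-f(x_i)|<|C|\epsilon$. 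For (v) I concatenate: if $a=x_0<\cdots<x_n=b$ witnesses $\epsilon$-saturation on $[a,b]$ and $b=y_0<\cdots<y_m=c$ witnesses it on $[b,c]$, then the glued sequence $x_0<\cdots<x_n=y_0<\cdots<y_m$ partitions $[a,c]$, with every consecutive increment of $f$ still below $\epsilon$.

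For (vi), I would transport a witnessing partition $a=x_0<\cdots<x_n=b$ of $[a,b]$ (with $x_i\in U$ and $|f(h(x_{i+1}))-f(h(x_i))|<\epsilon$) through $h$. Suppose first that $h$ is increasing, so that $h(a)=h(x_0)\le\cdots\le h(x_n)=h(b)$ all lie in $\TT$. Passing to the distinct values $h(a)=w_0<\cdots<w_r=h(b)$ among them, each consecutive increment $|f(w_{t+1})-f(w_t)|$ equals $|f(h(x_{i+1}))-f(h(x_i))|<\epsilon$ for a suitable index $i$ (namely the largest $i$ with $h(x_i)=w_t$), so $f$ is $\epsilon$-saturated on $[h(a),h(b)]$. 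If $h$ is decreasing, precomposing with $x\mapsto -x$ reduces to the increasing case and yields $\epsilon$-saturation of $f$ on $[h(b),h(a)]$ (the interval with endpoints $h(a)$ and $h(b)$).

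The one step needing an argument, and hence the main (if still mild) obstacle, is (vii): combining a partition adapted to $f$ with one adapted to $g$. I would take $a=x_0<\cdots<x_n=b$ witnessing $\epsilon$-saturation of $f$ and $a=y_0<\cdots<y_m=b$ witnessing $\epsilon'$-saturation of $g$, and let $a=z_0<\cdots<z_k=b$ be their common refinement (the sorted union of the two point sets, all of which lie in $\TT$). For each $\ell$, the interval $[z_\ell,z_{\ell+1}]$ is contained in some $[x_i,x_{i+1}]$ and in some $[y_j,y_{j+1}]$. Since $f$ is monotone on $[a,b]\cap\TT$, the increment $f(z_{\ell+1})-f(z_\ell)$ has the same sign as $f(x_{i+1})-f(x_i)$ and is no larger in absolute value, so $|f(z_{\ell+1})-f(z_\ell)|\le|f(x_{i+1})-f(x_i)|<\epsilon$; monotonicity of $g$ gives $|g(z_{\ell+1})-g(z_\ell)|<\epsilon'$ in the same way. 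The triangle inequality then yields $|(f+g)(z_{\ell+1})-(f+g)(z_\ell)|<\epsilon+\epsilon'$, so the common refinement witnesses that $f+g$ is $(\epsilon+\epsilon')$-saturated on $[a,b]$, completing the proof.
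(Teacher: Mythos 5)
Your proposal is correct and takes essentially the same route as the paper: the paper treats parts (i)--(vi) as trivial and proves only part (vii), exactly as you do, by sorting the union of the two witnessing partitions into a common refinement and using monotonicity of $f$ and $g$ to bound each increment by an increment of the original partitions. Your added care in (vi) (collapsing repeated values of $h$ and reducing the decreasing case to the increasing one) and the caveat $C \neq 0$ in (iv) are fine refinements of steps the paper leaves unstated.
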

\begin{proof}
    We prove \cref{item:saturated-add}; the other parts are trivial. By \cref{def:saturated}, there exist $x_0, \ldots, x_m, y_0, \ldots, y_n$ such that 
    \begin{alignat*}{4}
        a &= x_0 &&< \cdots &&< x_m &&= b \\
        a &= y_0 &&< \cdots &&< y_n &&= b
    \end{alignat*}
    and such that $\abs{f(x_{i + 1}) - f(x_i)} < \epsilon$ for $i = 0, \ldots, m - 1$ and $\abs{g(y_{j + 1}) - g(y_{j})} < \epsilon'$ for $j = 0, \ldots, n - 1$.
    
    Let $z_0, \ldots, z_p$ be the elements of the set $\{x_0, \ldots, x_m, y_0, \ldots, y_n\}$ in increasing order. Then, \[a = z_0 < \cdots < z_p = b.\] Moreover, for all $k$ with $0 \leq k < p$, there exists $i$ such that $z_k, z_{k + 1} \in [x_i, x_{i + 1}]$, and there exists $j$ such that $z_k, z_{k+1} \in [y_j, y_{j + 1}]$. Since $f$ and $g$ are monotone, we conclude that
    \begin{align*}
        \abs{(f + g)(z_{k + 1}) - (f + g)(z_k)} &\leq \abs{f(z_{k+1}) - f(z_k)} + \abs{g(z_{k+1}) - g(z_k)} \\
        &\leq \abs{f(x_{i + 1}) - f(x_i)} + \abs{g(y_{j + 1}) - g(y_j)} \\
        &\leq \epsilon + \epsilon',
    \end{align*}
    so $f + g$ is $(\epsilon + \epsilon')$-saturated on $[a, b]$, as desired.
\end{proof}

\begin{proposition}\label{prop:interro-uniformly-continuous}
    The function $\interro$ is uniformly continuous on $[0, 1] \cap \Q$.
\end{proposition}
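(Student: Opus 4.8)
The plan is to invoke \cref{lem:uniformly-continuous-of-saturated}: since \cref{prop:interro-increasing} already shows $\interro$ is (strictly) increasing on $[0,1]\cap\Q$, it suffices to prove that $\interro$ is $\epsilon$-saturated on $[0,1]$ for every $\epsilon>0$. Because $\epsilon$-saturation is preserved when $\epsilon$ increases (\cref{item:saturated-upward}) and is transitive across a common endpoint (\cref{item:saturated-transitive}), it is enough to prove, by induction on $k\ge 0$, that $\interro$ is $c^k$-saturated on both $[0,1/2]$ and $[1/2,1]$, where $c=13/16$; gluing the two halves at $1/2$ then gives $c^k$-saturation on $[0,1]$, and letting $k\to\infty$ finishes the proof via \cref{item:saturated-upward}. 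The base case $k=0$ is immediate from \cref{lem:values} and \cref{item:saturated-if-abs-lt}, since $\interro(1/2)-\interro(0)=1/16<1$ and $\interro(1)-\interro(1/2)=5/16<1$.

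For the inductive step, assume $\interro$ is $c^k$-saturated on $[0,1/2]$ and on $[1/2,1]$, hence on $[0,1]$ by \cref{item:saturated-transitive}. The key structural input is that for $n\ge 2$, \cref{item:n-gt-1} writes $\interro$ on $[1/(n+1),1/n]$ as $\tfrac{1}{4^n}-\tfrac{2}{4^n}(\interro\circ h_n)$, where $h_n(x)=1/x-n$ is a decreasing bijection of $[1/(n+1),1/n]$ onto $[0,1]$. Applying \cref{item:saturated-comp} (to $h_n^{-1}$, which is legitimate since \cref{item:saturated-comp} allows decreasing maps), then \cref{item:saturated-mul-const} and \cref{item:saturated-add-const}, we conclude that $\interro$ is $\tfrac{2}{4^n}c^k$-saturated, hence $\tfrac18 c^k$-saturated, on $[1/(n+1),1/n]$ (as $2/4^n\le 1/8$). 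Choosing $N$ with $\interro(1/N)=1/4^N<\tfrac18 c^k$ (\cref{lem:values}, \cref{item:saturated-if-abs-lt}) and gluing finitely many of these intervals to $[0,1/N]$ via \cref{item:saturated-transitive}, we obtain that $\interro$ is $\tfrac18 c^k$-saturated on $[0,1/2]$ and on $[0,1/3]$; the same rescaling argument with $n=2$ gives $\tfrac18 c^k$-saturation on $[1/3,1/2]$. In particular $\interro$ is $c^{k+1}$-saturated on $[0,1/2]$, since $\tfrac18\le c$.

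The crux is the interval $[1/2,1]$, where \cref{item:n-eq-1} gives $\interro(x)=\tfrac38-\tfrac34\interro(1/x-1)-\tfrac12\interro(1-x)$: this recursion self-references $\interro$ on $[1/2,1]$, and the naive ``Lipschitz weight'' of the right-hand side is $\tfrac34+\tfrac12>1$, so there is no naive contraction estimate. The fix is to split $[1/2,1]$ at $2/3$. For $x\in[2/3,1]$ one has $1/x-1\in[0,1/2]$ and $1-x\in[0,1/3]$, and $x\mapsto 1/x-1$, $x\mapsto 1-x$ are decreasing bijections of $[2/3,1]$ onto $[0,1/2]$ and $[0,1/3]$ respectively; so $-\tfrac34\interro(1/x-1)$ and $-\tfrac12\interro(1-x)$ are increasing and, by the previous paragraph together with \cref{item:saturated-comp} and \cref{item:saturated-mul-const}, are $\tfrac34\cdot\tfrac18 c^k$- and $\tfrac12\cdot\tfrac18 c^k$-saturated on $[2/3,1]$; by \cref{item:saturated-add} and \cref{item:saturated-add-const}, $\interro$ is $\tfrac{5}{32}c^k$-saturated on $[2/3,1]$. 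For $x\in[1/2,2/3]$, the map $x\mapsto 1/x-1$ is a decreasing bijection of $[1/2,2/3]$ onto $[1/2,1]$, so by the induction hypothesis on $[1/2,1]$ (and \cref{item:saturated-comp}, \cref{item:saturated-mul-const}) the term $-\tfrac34\interro(1/x-1)$ is $\tfrac34 c^k$-saturated, while $x\mapsto 1-x$ is a decreasing bijection onto $[1/3,1/2]$, so $-\tfrac12\interro(1-x)$ is $\tfrac12\cdot\tfrac18 c^k=\tfrac1{16}c^k$-saturated; adding via \cref{item:saturated-add} and \cref{item:saturated-add-const}, $\interro$ is $\bigl(\tfrac34+\tfrac1{16}\bigr)c^k=\tfrac{13}{16}c^k=c^{k+1}$-saturated on $[1/2,2/3]$. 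Gluing $[1/2,2/3]$ and $[2/3,1]$ through \cref{item:saturated-transitive} shows $\interro$ is $c^{k+1}$-saturated on $[1/2,1]$, completing the induction.

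The main obstacle is exactly this treatment of $[1/2,1]$. One must both peel off the ``good'' piece $[2/3,1]$, on which the recursion refers only to $[0,1/2]$ (where a genuine $\le\tfrac18$ contraction is available), and fold the statement about $[1/2,1]$ into the induction hypothesis, so that the unavoidable self-reference on $[1/2,2/3]$ costs only the factor $\tfrac34$ — leaving just enough room for the contribution $\tfrac12\cdot\tfrac18=\tfrac1{16}$ of the second term to keep the total weight $\tfrac{13}{16}<1$. Once this is in place, the remaining bookkeeping is routine given the parts of \cref{lem:saturated-properties}.
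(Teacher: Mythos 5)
Your proposal is correct, and it shares the paper's overall architecture: establish monotonicity (\cref{prop:interro-increasing}), prove $\epsilon$-saturation on $[0,1]$ for every $\epsilon>0$ by an induction driven by \cref{lem:piecewise} and the toolkit of \cref{lem:saturated-properties}, and conclude via \cref{lem:uniformly-continuous-of-saturated}. The genuine difference is how you defuse the supercritical weight $\tfrac34+\tfrac12>1$ in the recursion on $[1/2,1]$. The paper keeps $[1/2,1]$ in one piece but runs the induction with asymmetric constants --- $(7/8)^N$ on $[0,1/2]$ and $4\cdot(7/8)^N$ on $[1/2,1]$ --- so that the combination $\tfrac34\cdot 4+\tfrac12\cdot 1=\tfrac72=4\cdot\tfrac78$ still contracts in the weighted sense. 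You instead keep a single constant $c^k=(13/16)^k$ on both halves and split $[1/2,1]$ at $2/3$: on $[2/3,1]$ both arguments $1/x-1$ and $1-x$ land in $[0,1/2]$, where the fresh factor $2/4^n\le 1/8$ from \cref{item:n-gt-1} is available, and on $[1/2,2/3]$ only the $\tfrac34$-weighted term self-references $[1/2,1]$ while the $\tfrac12$-weighted term lands in $[1/3,1/2]$, giving total weight $\tfrac34+\tfrac1{16}=\tfrac{13}{16}<1$. Both devices deliver the required geometric decay; the paper's weighting avoids the extra subdivision and the auxiliary choice of $N$ inside each inductive step (it uses $[0,1/(N+2)]$ with $N$ the induction variable), whereas your split makes the source of the contraction more transparent. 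Your bookkeeping checks out: the summands in \cref{item:saturated-add} are indeed monotone increasing on the relevant intervals, the use of \cref{item:saturated-comp} with the decreasing inverse maps matches the paper's usage, and choosing $N$ with $1/4^N<\tfrac18 c^k$ inside the step for fixed $k$ is legitimate.
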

\begin{proof}
    We claim that for every nonnegative integer $N$, the function $\interro$ is $((7/8)^N)$-saturated on $[0, 1/2]$ and $(4 \cdot (7/8)^N)$-saturated on $[1/2, 1]$. We prove this claim by induction on $N$. In the base case $N = 0$, the claim follows from \cref{item:saturated-if-abs-lt}.

    For the inductive step, suppose that $\interro$ is $((7/8)^N)$-saturated on $[0, 1/2]$ and ${(4 \cdot (7/8)^N)}$-saturated on $[1/2, 1]$. We wish to prove that $\interro$ is $((7/8)^{N + 1})$-saturated on $[0, 1/2]$ and $(4 \cdot (7/8)^{N + 1})$-saturated on $[1/2, 1]$. Note that the inductive hypothesis implies, by \cref{item:saturated-upward,item:saturated-transitive}, that $\interro$ is $(4 \cdot (7/8)^N)$-saturated on $[0, 1]$.

    First, we prove that $\interro$ is $((7/8)^{N + 1})$-saturated on $[0, 1/2]$. By \cref{item:saturated-transitive}, it suffices to show that $\interro$ is $((7/8)^{N + 1})$-saturated on $[0, 1/(N + 2)]$, as well as on $[1/(n + 1), 1/n]$ for $2 \leq n < N + 2$. For the former, \cref{lem:values} yields \[\abs{\interro\left(\frac{1}{N + 2}\right) - \interro(0)} = \frac{1}{4^{N + 2}} < \left(\frac{7}{8}\right)^{N + 1},\] so $\interro$ is $((7/8)^{N + 1})$-saturated on $[0, 1/(N + 2)]$ by \cref{item:saturated-if-abs-lt}. For the latter, recall from \cref{lem:piecewise} that for all $n \geq 2$ and all $x \in [1/(n + 1), 1/n]$, we have \begin{equation}\label{eq:interro-of-le-half} \interro(x) = \frac{1}{4^n}\left(1 - 2 \interro\left(\frac{1}{x} - n\right)\right).\end{equation}
    Since $\interro$ is $(4 \cdot (7/8)^N)$-saturated on $[0, 1]$, it follows from \cref{item:saturated-comp} that, as a function of $x$, \[ \interro\left(\frac{1}{x} - n\right)\] is $(4 \cdot (7/8)^N)$-saturated on $[1/(n + 1), 1/n]$.
    By \cref{item:saturated-add-const,item:saturated-mul-const} and \eqref{eq:interro-of-le-half}, it follows that $\interro$ is $(2/4^n \cdot 4 \cdot (7 / 8)^{N})$-saturated on $[1/(n + 1), 1 / n]$. Since $n \geq 2$, we have \[\frac{2}{4^n} \cdot 4 \cdot \left(\frac{7}{8}\right)^{N} < \left(\frac{7}{8}\right)^{N + 1},\] so by \cref{item:saturated-upward}, $\interro$ is $((7/8)^{N + 1})$-saturated on $[1/(n + 1), 1/n]$, as desired.

    To complete the induction, it remains to prove that $\interro$ is $(4 \cdot (7/8)^{N + 1})$-saturated on $[1/2, 1]$. Recall that by \cref{lem:piecewise}, we have 
    \begin{equation}\label{eq:interro-of-ge-half}\interro(x) = \displaystyle\frac{3}{8} - \frac{3}{4} \interro\left(\frac{1}{x} - 1\right) - \frac{1}{2} \interro(1 - x)
    \end{equation} for all $x \in [1/2, 1]$.
    Since $\interro$ is $(4 \cdot (7/8)^N)$-saturated on $[0, 1]$, it follows from \cref{item:saturated-comp} that, as a function of $x$, \[\interro\left(\frac{1}{x} - 1\right)\] is $(4 \cdot (7/8)^N)$-saturated on $[1/2, 1]$. Similarly, since $\interro$ is $((7 / 8)^N)$-saturated on $[0, 1/2]$, it follows from \cref{item:saturated-comp} that, as a function of $x$, \[\interro\left(1 - x\right)\] is $((7/8)^N)$-saturated on $[1/2, 1]$.
    
    By \cref{item:saturated-add-const,item:saturated-mul-const,item:saturated-add} and \eqref{eq:interro-of-ge-half}, it follows that $\interro$ is $\epsilon$-saturated on $[0, 1/2]$, where \[\epsilon = \frac{3}{4} \cdot 4 \cdot \left(\frac{7}{8}\right)^N + \frac{1}{2} \cdot \left(\frac{7}{8}\right)^N.\] This equals $4 \cdot (7/8)^{N + 1}$, so the induction is complete, proving the claim.

    Since \[\lim_{N \to \infty} \left(\frac{7}{8}\right)^N = \lim_{N \to \infty} 4 \cdot \left(\frac{7}{8}\right)^N = 0,\] we conclude that $\interro$ is $\epsilon$-saturated on $[0, 1]$ for all $\epsilon > 0$. By \cref{lem:uniformly-continuous-of-saturated}, $\interro$ is uniformly continuous on $[0, 1] \cap \Q$, as desired.
\end{proof}

\begin{theorem}\label{thm:interro-real}
    The function $\interro$ extends uniquely to a continuous, strictly increasing function $\interro \colon [0, 1] \to \R$. Moreover, the functional equation \[\interro(x) =
    \begin{cases}
        0 & \mbox{if $x = 0$} \\\noalign{\vskip5pt}
        \displaystyle\frac{1}{4^{\lfloor 1 / x \rfloor}}\left(1 - 2 \interro\left(\left\{\frac{1}{x}\right\}\right)\right) & \mbox{if $0 < x \leq \frac{1}{2}$} \\\noalign{\vskip5pt}
        \displaystyle\frac{3}{8} - \frac{3}{4} \interro\left(\frac{1}{x} - 1\right) - \frac{1}{2} \interro(1 - x) & \mbox{if $\frac{1}{2} < x \leq 1$}
    \end{cases}\] from \eqref{eq:interro} holds for all $x \in [0, 1]$ (not necessarily rational).
\end{theorem}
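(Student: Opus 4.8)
The plan is to argue in three stages: first produce the continuous extension, then upgrade it to a strictly increasing map, and finally check that the three-case functional equation survives the passage from $\Q$ to all of $[0,1]$. For the first stage, note that by \cref{prop:interro-uniformly-continuous} the function $\interro$ is uniformly continuous on $[0,1]\cap\Q$, which is dense in the compact interval $[0,1]$, and the target $\R$ is complete. Hence $\interro$ extends to a unique continuous function on $[0,1]$, which I continue to denote $\interro$; uniqueness of any continuous extension is forced by density of the rationals.

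For the strict monotonicity, given $0\le a<b\le 1$ I would fix rationals $r'<r''$ in the open interval $(a,b)$ together with rational sequences $p_k\to a$ and $q_k\to b$ in $[0,1]\cap\Q$ with $p_k<r'$ and $q_k>r''$ for all $k$. Then \cref{prop:interro-increasing} gives $\interro(p_k)<\interro(r')<\interro(r'')<\interro(q_k)$, and letting $k\to\infty$ and using continuity of the extension yields $\interro(a)\le\interro(r')<\interro(r'')\le\interro(b)$, so $\interro(a)<\interro(b)$.

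For the functional equation, the case $x=0$ is the definition. For $0<x\le\tfrac12$, set $n=\lfloor 1/x\rfloor\ge 2$, so $x\in(1/(n+1),1/n]$ and $\{1/x\}=1/x-n$; on this half-open interval $x\mapsto 1/x-n$ is continuous with values in $[0,1)$, so by the first stage the function $x\mapsto\tfrac{1}{4^n}\bigl(1-2\interro(1/x-n)\bigr)$ is continuous there. It agrees with the continuous function $\interro(x)$ at every rational point of $(1/(n+1),1/n]$ by \cref{lem:piecewise} (the case $n\ge 2$), using \cref{lem:values} at the endpoint $x=1/n$; density of the rationals then forces agreement on all of $(1/(n+1),1/n]$, and the union over $n\ge 2$ gives the identity on $(0,\tfrac12]$. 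The interval $(\tfrac12,1]$ is handled identically: $x\mapsto 1/x-1$ and $x\mapsto 1-x$ are continuous on $(\tfrac12,1]$ with values in $[0,1)$ and $[0,\tfrac12)$ respectively, so $\tfrac38-\tfrac34\interro(1/x-1)-\tfrac12\interro(1-x)$ is continuous there and matches $\interro(x)$ on rationals by \cref{lem:piecewise} (the case $n=1$), hence everywhere by density.

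Given \cref{prop:interro-increasing} and \cref{prop:interro-uniformly-continuous}, this is mostly bookkeeping, and I do not expect a serious obstacle. The one point that must be handled carefully is the order of operations: the right-hand sides of the functional equation evaluate $\interro$ at auxiliary points such as $1/x-n$ and $1-x$, so one must first know that $\interro$ is defined and continuous on all of $[0,1]$ before those expressions make sense and can be shown to be continuous functions of $x$. There is no circularity, since every auxiliary argument again lies in $[0,1]$. A minor technical nuisance is treating the endpoints $x=1/n$ uniformly with the floor $\lfloor 1/x\rfloor$; restricting to the half-open intervals $(1/(n+1),1/n]$ and invoking \cref{lem:values} at $x=1/n$ resolves this cleanly.
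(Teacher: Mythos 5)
Your proposal is correct and follows essentially the same route as the paper, which deduces the theorem from \cref{prop:interro-increasing}, \cref{prop:interro-uniformly-continuous}, and the density of $[0,1]\cap\Q$ in $[0,1]$; you have simply spelled out the standard extension, strict-monotonicity, and density arguments that the paper leaves implicit. No gaps.
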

\begin{proof}
    This follows from \cref{prop:interro-increasing,prop:interro-uniformly-continuous} and the fact that $[0, 1] \cap \Q$ is dense in $[0, 1]$.
\end{proof}

Dushistova and Moshchevitin proved in 2010 that Minkowski's question-mark function is differentiable almost everywhere and that its derivative is zero almost everywhere \cite{MR2825515}. We conjecture that the interrobang function has this same property.
\begin{conjecture}\label{conj:differentiable} 
    For $x \in [0, 1]$ outside of a set of measure zero, $\interro$ is differentiable at $x$ and $\interro'(x) = 0$.
\end{conjecture}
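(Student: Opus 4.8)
The plan is to prove the equivalent statement that the Lebesgue--Stieltjes measure $\nu = d\interro$ on $[0,1]$ is singular with respect to Lebesgue measure $\lambda$. Since $\interro$ is continuous and strictly increasing (\cref{thm:interro-real}), it is differentiable $\lambda$-a.e., and in the Lebesgue decomposition $\nu = \nu_{\mathrm{ac}} + \nu_{\mathrm{sing}}$ the singular part contributes nothing to the derivative a.e.\ while the absolutely continuous part contributes its density; thus ``$\interro' = 0$ $\lambda$-a.e.''\ is precisely the assertion $\nu_{\mathrm{ac}} = 0$, i.e.\ $\nu \perp \lambda$. So the whole task is to exhibit a Borel set of zero Lebesgue measure carrying $\nu$; in fact one expects to produce such a set of Hausdorff dimension strictly less than $1$, which would strengthen \cref{conj:differentiable}.

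To find such a set, one reads the functional equation of \cref{thm:interro-real} as a self-similarity for $\nu$. Writing $\phi_n(t) = \tfrac{1}{n+t}$ for $n \ge 2$, $\psi(t) = \tfrac{1}{1+t}$, and $\kappa(u) = 1-u$, differentiating \eqref{eq:interro} on each of the intervals $[\tfrac1{n+1}, \tfrac1n]$ and $[\tfrac12, 1]$, and using that the first branch already accounts for all of $\nu|_{[0,1/2]}$, one obtains
\[
\nu \;=\; \sum_{n \ge 2} \frac{2}{4^{n}}\,(\phi_n)_*\nu \;+\; \frac34\,\psi_*\nu \;+\; \frac12 \sum_{n \ge 2} \frac{2}{4^{n}}\,(\kappa \circ \phi_n)_*\nu .
\]
After normalizing, $\nu$ is thus the stationary measure of a conformal iterated function system on $[0,1]$ with countably many contracting maps $\{\phi_n\}_{n \ge 2} \cup \{\psi\} \cup \{\kappa \circ \phi_n\}_{n \ge 2}$ and probability weights $\{\tfrac{2}{4^n}\} \cup \{\tfrac34\} \cup \{\tfrac{1}{4^n}\}$ (equivalently, via the analysis of \cref{sec:pgl2} and \cref{sec:pgl2-limit}, the limiting distribution $\mu$ of the reduced random walk in $PGL_2(\Z)$ is a harmonic-type measure on $\overline{\R}$). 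From here one would run the usual dichotomy for such measures: code $\nu$-a.e.\ $x \in [0,1]$ by the infinite sequence of branches localizing it, build an ergodic shift-invariant measure lying over $\nu$ on the associated infinite-alphabet Markov shift, and use the Birkhoff ergodic theorem to compare, along the orbit of $x$, the exponential decay rate of the $\nu$-mass of the localizing cylinders with that of their diameters. This identifies the exact dimension of $\nu$ as a ratio $h/\lambda$, where $h = -\sum_j p_j \log p_j$ is the (finite) entropy of the weights and $\lambda = -\int_{[0,1]} \sum_j p_j \log|g_j'(x)|\, d\bar\nu(x)$ is the Lyapunov exponent of the system; the target inequality is $h < \lambda$, which forces $\dim \nu < 1$ and hence $\nu \perp \lambda$. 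A rough computation --- using $|\phi_n'| \le n^{-2}$, $|\psi'(x)| = (1+x)^{-2}$, and the fact that $\bar\nu$ puts mass $\tfrac{15}{16}$ on $[\tfrac12, 1]$, where $1+x \ge \tfrac32$ --- indicates that $h < \lambda$ does hold, but only by a small margin, so genuine estimates rather than soft inequalities will be needed.

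The main obstacle is that the interrobang function, unlike Minkowski's $\?$, is not the coding of a single interval map: the third branch of \eqref{eq:interro} evaluates $\interro$ at the two distinct points $\tfrac1x - 1$ and $1-x$ simultaneously, so the natural symbolic model is tree-indexed, and --- more seriously --- the families $\{\psi\}$ and $\{\kappa \circ \phi_n\}_{n \ge 2}$ \emph{both} have $[\tfrac12, 1]$ as their union of images, so the iterated function system has a complete overlap there rather than satisfying an open set condition (there is also a mild non-uniformity of contraction, since $|\psi'(0)| = 1$, but this occurs in a $\nu$-negligible region). Establishing the exact-dimension formula $\dim \nu = h/\lambda$ and the strict inequality $h < \lambda$ in this infinite-alphabet, overlapping regime is the crux: one would presumably induce on the uniformly contracting part of the system and use the thermodynamic formalism for countable Markov shifts (in the spirit of Mauldin--Urba\'nski and of the analysis of the Farey and Gauss maps), together with recent results on dimensions of self-similar and stationary measures with overlaps. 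A more hands-on alternative is to adapt the quantitative estimates of Dushistova and Moshchevitin for $\?$ \cite{MR2825515} --- bounding $|\interro(x) - \interro(p/q)|$ against $|x - p/q|$ along the branch approximants $p/q$ of $x$ and applying the ergodic theorem for the underlying expanding map to show this ratio tends to $0$ for $\lambda$-a.e.\ $x$ --- or, for a general hyperbolic triangle group, to work directly with the reduced random walk, writing $\dim \mu$ as the ratio of its asymptotic entropy to its Lyapunov exponent (the latter positive by \cref{lem:strict-contract}) and comparing the two via the automaton recognizing reduced words. The fact that the analogous statement for $\?$ itself resisted proof for roughly a century is a fair measure of the difficulty here.
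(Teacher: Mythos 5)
You should first be aware that the paper does not prove this statement: it is \cref{conj:differentiable}, explicitly left open by the authors, so there is no proof in the paper to compare yours against, and your write-up must be judged as a proposed resolution of an open problem. As such it is a program, not a proof. The parts that are sound: the reduction is correct (since $\interro$ is monotone it is differentiable a.e.\ automatically, and ``$\interro'=0$ a.e.'' is exactly singularity of the Stieltjes measure $\nu=d\interro$ with respect to Lebesgue measure), and the self-similarity identity you extract from \eqref{eq:interro} is right: with $\phi_n(t)=1/(n+t)$, $\psi(t)=1/(1+t)$, $\kappa(u)=1-u$, one has $\interro\circ\phi_n=4^{-n}(1-2\interro)$ on $[0,1]$ and the third branch of \eqref{eq:interro} applied at $x=\psi(t)$, which yields your stationarity equation, with the weights summing to $1$ after normalizing $\nu$ by its total mass $3/8$. (One numerical slip: the normalized measure assigns $[\tfrac12,1]$ mass $(5/16)/(3/8)=\tfrac{5}{6}$, not $\tfrac{15}{16}$.)

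The genuine gap is the one you flag yourself, and it is load-bearing. Because $\psi([0,1])=[\tfrac12,1]$ coincides with the union of the images of the maps $\kappa\circ\phi_n$, the system has total overlap, so no open set condition holds and ``$\dim\nu=h/\lambda$'' is not a citable fact; in overlapping settings where exact dimensionality is known, $h$ must be replaced by a projection entropy. What you actually need is only the upper bound $\dim\nu\le h/\lambda$ (which already forces $\nu\perp\lambda$ once $h<\lambda$), but even that bound has to be established here for a countable-alphabet conformal system with overlaps that is moreover not uniformly contracting ($|\psi'(0)|=1$); this is deferred to ``presumably'' and to literature (Mauldin--Urba\'nski, thermodynamic formalism for countable shifts, recent overlap results) that does not cover this exact situation off the shelf. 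Likewise the crucial strict inequality $h<\lambda$ rests on a back-of-envelope computation with an erroneous constant; redoing it with the correct mass $\tfrac56$ it still appears to hold with a modest margin (roughly $h\approx 0.91$ versus $\lambda\gtrsim 1.0$ in nats), but a proof requires rigorous estimates of the Lyapunov integral against $\nu$ itself, e.g.\ via the explicit values of $\interro$ at rationals, and none are supplied. So: correct reduction and a plausible route, but the two decisive steps (a dimension upper bound valid despite overlaps and non-uniform contraction, and a rigorous verification of $h<\lambda$) are missing, and the conjecture remains open.
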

\subsection{Arithmetic properties}

Like Minkowski's question-mark function, the interrobang function maps rational numbers to dyadic rational numbers.

\begin{proposition}\label{prop:rational_to_dyadic} 
    For all $x \in [0, 1] \cap \Q$, we have that $\interro(x)$ is an explicitly computable dyadic rational number.
\end{proposition}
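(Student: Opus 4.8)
The plan is a routine strong induction on the height $\height(x)$, exploiting the fact (recorded in \cref{def:interro}) that the recursion defining $\interro$ strictly decreases height. The only arithmetic input is the elementary observation that the set of dyadic rationals is closed under addition, subtraction, and multiplication by a dyadic rational, and that the constants appearing in \eqref{eq:interro} — namely $\tfrac{1}{4^{\lfloor 1/x\rfloor}} = 2^{-2\lfloor 1/x\rfloor}$, $2$, $\tfrac38$, $\tfrac34$, and $\tfrac12$ — are all dyadic.

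First I would set up the induction on $\height(x)$ for $x \in [0,1]\cap\Q$. In the base case, the smallest possible height is attained at $x = 0 = 0/1$, and $\interro(0) = 0$ is dyadic. For the inductive step with $x \neq 0$, I split into the two cases of \eqref{eq:interro}. If $0 < x \le \tfrac12$, then $\height(\{1/x\}) < \height(x)$, so by the inductive hypothesis $\interro(\{1/x\})$ is a dyadic rational; hence $1 - 2\interro(\{1/x\})$ is dyadic, and multiplying by $2^{-2\lfloor 1/x\rfloor}$ keeps it dyadic, so $\interro(x)$ is dyadic. If $\tfrac12 < x \le 1$, then $\height(1/x - 1) < \height(x)$ and $\height(1 - x) < \height(x)$, so both $\interro(1/x - 1)$ and $\interro(1 - x)$ are dyadic by induction; then $\tfrac38 - \tfrac34\interro(1/x - 1) - \tfrac12\interro(1 - x)$ is a dyadic rational, so $\interro(x)$ is dyadic. (The value $x = \tfrac12$ is handled by the first case, and $x = 1$ by the second; no separate treatment is needed, though one may alternatively invoke \cref{lem:values}.)

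For the "explicitly computable" clause, I would simply note that the same argument is algorithmic: since $\height$ strictly decreases along each branch of \eqref{eq:interro}, unrolling the recursion terminates after finitely many steps, and each step applies an explicit arithmetic formula in previously computed dyadic rationals, so $\interro(x)$ can be computed exactly as a dyadic rational.

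\textbf{Expected main obstacle.} There is essentially no obstacle here — the statement is an immediate consequence of the recursive definition together with closure properties of dyadic rationals, and the only point requiring any care is making sure the case analysis matches \eqref{eq:interro} exactly and that the well-foundedness of the recursion by height (already asserted in \cref{def:interro}) is correctly invoked so that the inductive hypothesis applies to every $\interro$-value on the right-hand side.
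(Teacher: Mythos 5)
Your proposal is correct and is essentially the paper's argument: the paper's proof simply says the statement follows immediately from \cref{def:interro}, and your strong induction on $\height(x)$ (using that the recursion strictly decreases height and that all constants in \eqref{eq:interro} are dyadic) is exactly the routine unpacking of that claim, including the observation that the recursion terminates and is therefore algorithmic.
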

\begin{proof}
    This follows immediately from \cref{def:interro}.
\end{proof}

Unlike Minkowski's question-mark function, however, it does not appear that $\interro$ maps rational numbers to dyadic rational numbers bijectively.

\begin{conjecture}\label{conj:irrational}
    There exists a dyadic rational number $y \in [0, 3/8]$ such that $\interro^{-1}(y)$ is irrational.
\end{conjecture}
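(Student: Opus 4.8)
Since \cref{conj:irrational} is a conjecture, what follows is a plan of attack rather than a proof. First observe that the conjecture admits a clean reformulation. By \cref{thm:interro-real} and \cref{lem:values}, the map $\interro\colon[0,1]\to[0,3/8]$ is a continuous, strictly increasing bijection, so $\interro^{-1}(y)$ is a well-defined element of $[0,1]$ for every $y\in[0,3/8]$, and it is irrational exactly when $y\notin\interro([0,1]\cap\Q)$. Since $\interro([0,1]\cap\Q)$ consists of dyadic rationals in $[0,3/8]$ by \cref{prop:rational_to_dyadic}, the conjecture is \emph{equivalent} to the assertion that the inclusion $\interro([0,1]\cap\Q)\subseteq\{\text{dyadic rationals in }[0,3/8]\}$ is proper; equivalently, that for some $N$ not every dyadic rational $j/2^{N}$ lying in $[0,3/8]$ is a value of $\interro$ at a rational argument.

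My main line of attack would be a counting argument. Because $\interro$ is injective on $[0,1]\cap\Q$, the number of dyadics with denominator dividing $2^{N}$ that lie in the image equals
\[
D(N)=\bigl|\{x\in[0,1]\cap\Q : 2^{N}\interro(x)\in\Z\}\bigr|,
\]
whereas the number of dyadics $j/2^{N}$ in $[0,3/8]$ is $\lfloor 3\cdot 2^{N-3}\rfloor+1=\Theta(2^{N})$. Thus it suffices to prove $D(N)=o(2^{N})$: then for large $N$ some target $j/2^{N}\in[0,3/8]$ is omitted, and its preimage under $\interro$ is irrational. To bound $D(N)$ I would unfold the recurrence \eqref{eq:interro} into a computation tree and track the $2$-adic denominator exponent $e(x)$ of $\interro(x)$. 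In the branch $0<x\le 1/2$, where $\interro(x)=4^{-\lfloor 1/x\rfloor}\bigl(1-2\interro(\{1/x\})\bigr)$, one uses the fact that no value of $\interro$ has denominator exactly $2$ (all values lie in $[0,3/8]\subseteq[0,1/2)$), so the subtraction $1-2\interro(\{1/x\})$ involves no $2$-adic cancellation and $e(x)$ exceeds $e(\{1/x\})$ by at least $3$; this makes the contribution of the $[0,1/2]$ branch to $D(N)$ geometrically summable in terms of smaller values of $D(\cdot)$, of size roughly $2^{N/2}$.

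The difficulty, and in my view the essential obstacle, is the branch $1/2<x\le1$, where
\[
\interro(x)=\tfrac38-\tfrac34\,\interro\!\bigl(\tfrac1x-1\bigr)-\tfrac12\,\interro(1-x),
\]
and where, writing $x=p/q$ in lowest terms and $r=q-p$, the two arguments $r/p$ and $r/q$ are \emph{both} determined by the single parameter $\alpha=r/p$ (indeed $r/q=\tfrac{\alpha}{1+\alpha}$). Here the denominator exponent grows by only $O(1)$ per level, so bounding the output complexity does not bound the tree; worse, if $a=e(r/p)$ and $b=e(r/q)$ satisfy $a=b-1$, then $3\interro(r/p)+2\interro(r/q)$ can acquire an arbitrarily large power of $2$ in its numerator, so $e(x)$ can be far smaller than $\max(a,b)$. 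Controlling $D(N)$ therefore requires understanding the joint $2$-adic behavior of $\interro(\alpha)$ and $\interro(\alpha/(1+\alpha))$ — essentially, showing that the cancellation $3\interro(\alpha)+2\interro(\alpha/(1+\alpha))\equiv0\pmod{2^{j}}$ is rare enough (heuristically density $\approx 2^{-j}$) that the total still comes out $o(2^{N})$. A plausible route is to carry the pair $(e(x),\height(x))$ through the recursion and prove they remain linked by an explicit inequality; an alternative, more hands-on route is to fix a single target, say $y=1/8$, and attempt to prove by strong induction on height that $\interro(x)\ne 1/8$ for every $x\in[0,1]\cap\Q$ by analyzing the Diophantine equation $3\interro(r/p)+2\interro(r/q)=1$ — a statement that would also be a first step toward \cref{conj:transcendental}. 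Either way, taming the $2$-adic cancellation in the $1/2<x\le1$ branch is the crux.
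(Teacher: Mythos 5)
The paper offers no proof of \cref{conj:irrational}: it is left open, supported only by a computer search showing that neither $\interro^{-1}(1/8)$ nor $\interro^{-1}(1/4)$ equals a fraction $A/B$ with $0 < A \leq B \leq 100000$, together with the stronger \cref{conj:transcendental}. You correctly treat the statement as a conjecture and propose a plan rather than a proof, so there is no proof to compare against and no claim of yours to refute. Your reformulation is sound: by \cref{thm:interro-real} and \cref{lem:values}, $\interro$ is a continuous increasing bijection $[0,1]\to[0,3/8]$, and by \cref{prop:rational_to_dyadic} it sends rationals to dyadic rationals, so the conjecture is exactly the assertion that $\interro([0,1]\cap\Q)$ is a proper subset of the dyadic rationals in $[0,3/8]$ (in contrast with the question-mark function, where the analogous map is onto). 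Your counting strategy is a reasonable attack, and your analysis of the $0 < x \leq \tfrac12$ branch of \eqref{eq:interro} is essentially right: there $\lfloor 1/x\rfloor \geq 2$, the factor $4^{-\lfloor 1/x\rfloor}$ forces the $2$-adic denominator exponent to grow quickly, and no cancellation occurs in $1 - 2\interro(\{1/x\})$ since no value of $\interro$ at a rational has denominator exactly $2$.

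You also correctly locate the obstruction in the branch $\tfrac12 < x \leq 1$, where the arguments $\tfrac1x - 1$ and $1-x$ are coupled and $2$-adic cancellation in $\tfrac34\interro(\tfrac1x-1)+\tfrac12\interro(1-x)$ can make the output denominator much smaller than the inputs'; nothing in the paper's machinery (heights, the saturation lemmas, or the linear-algebra argument of \cref{thm:quadratic_to_rational}) controls this cancellation, so your assessment that this is the crux seems accurate. Two remarks. First, even if the counting bound $D(N)=o(2^N)$ were established, it would yield only the existence of some omitted dyadic value, which suffices for \cref{conj:irrational} but says nothing about the specific value $1/8$ or about \cref{conj:transcendental}; your alternative ``hands-on'' route of proving $\interro(x)\neq 1/8$ for all rational $x$ is precisely the statement the authors' computer search probes. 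Second, your heuristic that the cancellation event has density about $2^{-j}$ is plausible but unsupported; as it stands the plan is an honest research program, not a proof, which is the appropriate posture for this statement.
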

Using a computer search, the authors have verified that neither $\interro^{-1}(1/8) = 0.61242994\ldots$ nor $\interro^{-1}(1/4) = 0.61834758\ldots$ can be written as a fraction $A / B$, where $A$ and $B$ are integers with $0 < A \leq B \leq 100000$. This provides some computational evidence for \cref{conj:irrational}. In fact, we conjecture the following stronger statement.
\begin{conjecture}\label{conj:transcendental}
    The number $\interro^{-1}\left(1/8\right) = 0.61242994\ldots$ is transcendental.
\end{conjecture}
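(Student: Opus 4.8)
The plan is to extract an explicit digit expansion of $\alpha := \interro^{-1}(1/8)$ directly from the functional equation of \cref{thm:interro-real} and then feed it into a known transcendence criterion. Since $\interro(1/2) = 1/16 < 1/8 < 3/8 = \interro(1)$ and $\interro$ is continuous and strictly increasing, we have $\alpha \in (1/2,1)$, so $\interro(\alpha) = 1/8$ unfolds (using \cref{lem:values} for $\interro(1)=3/8$) as
\[\frac{3}{4}\,\interro\!\left(\frac{1}{\alpha}-1\right) + \frac{1}{2}\,\interro(1-\alpha) = \frac{1}{4}.\]
Iterating this, together with the branch $\interro(x)=4^{-\lfloor 1/x\rfloor}\bigl(1-2\interro(\{1/x\})\bigr)$ on $(0,1/2]$, produces an infinite tree of subordinate values lying in the orbit of $\alpha$ under the Gauss map $x\mapsto\{1/x\}$ and the reflection $x\mapsto 1-x$, whose suitably weighted signed contributions must sum to exactly $1/8$. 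The first step is to turn this tree into a concrete algorithm: at stage $n$ one uses monotonicity of $\interro$ and the exact values $\interro(1/m)=4^{-m}$ (\cref{lem:values}) to decide the $n$-th partial quotient $a_n$ of the continued fraction $\alpha=[0;a_1,a_2,\dots]$, thereby computing $(a_n)_{n\ge 1}$ digit by digit.

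The second and main step is to identify the combinatorial nature of the sequence $(a_n)_{n\ge 1}$ (equivalently, of the binary expansion of $\alpha$). Because the $(1/2,1]$ branch of the recursion is a \emph{two-term} recursion, the relevant state of the unfolding is not a single real number but a finite vector of subordinate remainders, so one should seek to realize $(a_n)$ as the output of a finite-state transducer applied to a fixed point of a substitution --- that is, to show that $\alpha$ is a \emph{morphic} (ideally automatic) continued fraction. If this succeeds, the third step is almost immediate: by the theorem of Adamczewski and Bugeaud that an automatic continued fraction expansion forces the number to be either a quadratic irrational or transcendental (with a corresponding statement in the morphic, non-eventually-periodic setting), it remains only to rule out that $\alpha$ is rational or quadratic. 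Both follow from non-eventual-periodicity of $(a_n)$ --- a finite continued fraction would mean $\alpha\in\Q$, and an eventually periodic one would mean $\alpha$ is a quadratic irrational --- so one would prove that the algorithm above never terminates and never becomes periodic; ruling out rationality in this way already settles \cref{conj:irrational}. Alternatively, quadratic irrationality can be excluded by showing $1/8\notin\interro(\{\text{quadratic irrationals}\})$, using \cref{thm:quadratic_to_rational} to control that image.

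I expect Step 2 to be the crucial obstacle, and this is presumably why the statement remains conjectural. The clean structure that makes Minkowski-type quantities tractable --- a single $GL_2(\Z)$-cocycle iterating along the continued fraction algorithm --- is broken by the branching of $\interro$ on $(1/2,1]$, and there is no a priori reason that the resulting digit sequence is morphic. Should it fail to be morphic, the natural fallback is to derive from the unfolding a sufficiently strong Diophantine inequality --- a sequence of rational or algebraic approximations to $\alpha$ of controlled height and quality --- and then invoke the Schmidt Subspace Theorem (or, if the approximations turn out to be strong enough, a direct Roth- or Liouville-type argument). However, controlling the approximation quality well enough to apply the Subspace Theorem appears to require a genuinely new idea, and a proof of \cref{conj:transcendental} --- or even of \cref{conj:irrational} --- may lie beyond current methods.
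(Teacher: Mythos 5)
The statement you are addressing is \cref{conj:transcendental}, which the paper itself leaves open: there is no proof in the paper to compare against, only computational evidence offered for the weaker \cref{conj:irrational}. Your submission is, by your own framing, a research program rather than a proof, so the verdict has to be that it contains a genuine gap --- the one you yourself locate at Step 2. The preliminary reductions are correct: by \cref{lem:values} and \cref{thm:interro-real}, the number $\alpha = \interro^{-1}(1/8)$ lies in $(1/2,1)$ and satisfies $\tfrac{3}{4}\interro(1/\alpha-1)+\tfrac{1}{2}\interro(1-\alpha)=\tfrac{1}{4}$, and monotonicity does in principle let one compute the continued fraction digits of $\alpha$ one at a time. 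But the entire transcendence argument rests on the unproved assertion that this digit sequence is morphic or automatic (or, failing that, that the unfolding produces algebraic approximations of a quality sufficient for Roth or the Subspace Theorem). As you note, the branch of \eqref{eq:interro} on $(1/2,1]$ couples two subordinate remainders, so the unfolding is a branching tree rather than a single Gauss-map orbit, and neither the paper nor your proposal supplies any mechanism collapsing that tree to a finite-state description; without it, the Adamczewski--Bugeaud criterion has nothing to act on, and non-eventual-periodicity (hence even irrationality, i.e.\ \cref{conj:irrational}) is likewise not established.

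One smaller inaccuracy: your fallback for excluding quadratic irrationality, namely showing $1/8\notin\interro(\{\text{quadratic irrationals}\})$ ``using \cref{thm:quadratic_to_rational},'' does not follow from that theorem, which only asserts that quadratic irrationals are sent to rational numbers; since $1/8$ is rational (indeed dyadic), the theorem is perfectly consistent with $\alpha$ being a quadratic irrational, and ruling this out would require a separate argument (non-eventual-periodicity of the digit sequence, or an exhaustive treatment of quadratics, neither of which is carried out). In short, the steps you actually write down are consistent with the paper's results, but the statement remains exactly as open after your proposal as before; it should be presented as a possible strategy toward \cref{conj:transcendental}, not as a proof.
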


We now prepare to prove that if $x \in [0, 1]$ is a quadratic irrational, then $\interro(x)$ is rational. For this, we will collect some properties of Minkowski's question-mark function and recall the classical Levy--Desplanques theorem.

\begin{lemma}[{\cite[Section~4]{MR7929}}]\label{lem:question}
    Minkowski's question-mark function satisfies the following properties for all $x \in [0, 1]$.
    \begin{lemenum}
        \item\label{item:question-recurrence} The functional equation \[\?(x) = \begin{cases}
    0 & \mbox{if $x = 0$} \\\noalign{\vskip5pt}
    \displaystyle\frac{1}{2^{\lfloor 1 / x \rfloor}}\left(2 -  \?\left(\left\{\frac{1}{x}\right\}\right)\right) & \mbox{otherwise}
\end{cases}\] from \cref{def:question} holds (even if $x$ is irrational).
        \item\label{item:question-rational} If $x$ is a quadratic irrational, then $\?(x)$ is an explicitly computable rational number.
        \item\label{item:question-symm} We have $\?(1 - x) = 1 - \?(x)$.
        \item\label{item:question-bijection} The function $\?$ is strictly increasing and maps $[0, 1]$ to $[0, 1]$ bijectively.
    \end{lemenum}
\end{lemma}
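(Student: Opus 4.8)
The plan is to derive all four statements from \cref{def:question} together with two standard facts about continued fractions; everything here goes back to Minkowski \cite{MR7929}, so the work is mostly in organizing the reductions cleanly. Write each $x \in (0, 1]$ in its continued fraction expansion $x = [0; a_1, a_2, \ldots]$ (finite if $x$ is rational, infinite otherwise, with the usual normalization), and set $S_j = a_1 + \cdots + a_j$. The key object is the \emph{Denjoy--Salem series}
\[
\?(x) = \sum_{j \geq 1} \frac{(-1)^{j+1}}{2^{S_j - 1}},
\]
the sum being finite when $x$ is rational; I would first prove this on $[0,1] \cap \Q$ by induction on $\height(x)$ (if $1/(n+1) < x \le 1/n$ then $\lfloor 1/x\rfloor = n$, the number $\{1/x\} = [0; a_2, a_3, \ldots]$ has strictly smaller height, and substituting the inductive hypothesis into \cref{def:question} yields exactly the displayed series), and then define $\?$ on all of $[0,1]$ by this formula. \textbf{This identification of the recursive $\?$ of \cref{def:question} with the classical function is the main step}; once it is in place, the remaining items are short.

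For \cref{item:question-bijection}: the series (or, equivalently, the induction-on-height argument used for $\interro$ in \cref{prop:interro-increasing}, now using the values $\?(1/n) = 2^{1-n}$ that follow immediately from \cref{def:question}) shows that $\?$ is strictly increasing on $[0,1] \cap \Q$. One also checks directly from \cref{def:question} that $\?$ sends the mediant $\frac{a+c}{b+d}$ of two Farey neighbours $\frac ab < \frac cd$ to $\tfrac12\bigl(\?(\tfrac ab) + \?(\tfrac cd)\bigr)$, so by induction on the Stern--Brocot level, $\?$ is a strictly increasing bijection from $[0,1] \cap \Q$ onto the set of dyadic rationals in $[0,1]$; since both sets are dense, $\?$ extends uniquely to a strictly increasing continuous bijection $[0,1] \to [0,1]$. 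For \cref{item:question-recurrence}: on each interval $(1/(n+1), 1/n)$ the integer $\lfloor 1/x\rfloor = n$ is constant and $x \mapsto \{1/x\} = 1/x - n$ is continuous, so both sides of the functional equation are continuous there; they agree on the dense set of rationals by \cref{def:question}, hence everywhere, and at the rational endpoints $1/n$ they agree by \cref{def:question} directly.

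For \cref{item:question-symm}: by continuity it suffices to prove $\?(1-x) + \?(x) = 1$ for rational $x$, which I would obtain by induction on $\height(x)$ from \cref{def:question} --- equivalently, from the series formula together with the continued-fraction identities $1 - [0; a_1, a_2, \ldots] = [0; 1, a_1 - 1, a_2, a_3, \ldots]$ for $a_1 \geq 2$ and $1 - [0; 1, a_2, a_3, \ldots] = [0; a_2 + 1, a_3, a_4, \ldots]$, which shift the exponents $S_j$ by exactly $1$ and reverse the alternating sign, converting $\?(x)$ into $1 - \?(x)$. Finally, for \cref{item:question-rational}: if $x$ is a quadratic irrational, then by Lagrange's theorem its continued fraction is eventually periodic, say with preperiod $a_1, \ldots, a_k$ and period $(b_1, \ldots, b_\ell)$ of digit-sum $B$. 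Splitting the series at index $k$, the tail is an explicit dyadic constant times a convergent geometric series of ratio $(-1)^\ell 2^{-B}$, so $\?(x)$ is rational; since the relevant continued-fraction digits are computable from the minimal polynomial of $x$, the value is explicitly computable. The only real content beyond the series formula is thus the appeal to Lagrange's theorem in this last part.
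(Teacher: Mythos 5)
The paper offers no proof of this lemma at all: it is imported from the literature via the citation to \cite[Section~4]{MR7929}, where all four items are classical. What you have written is essentially a reconstruction of that classical argument, and it is correct: the identification of the recursively defined $\?$ of \cref{def:question} with the Denjoy--Salem series $\sum_{j\geq 1}(-1)^{j+1}2^{1-(a_1+\cdots+a_j)}$ goes through by the height induction exactly as you say (the recursion strips off $a_1$ and rescales by $2^{-a_1}$, matching the series), the geometric-tail computation for eventually periodic expansions is the standard proof of \cref{item:question-rational} via Lagrange's theorem, and the continued-fraction identities you invoke for \cref{item:question-symm} are correct. Two points are glossed over and should be made explicit if this is written up. First, you define $\?$ at irrational $x$ by the series but obtain continuity from the order isomorphism of $[0,1]\cap\Q$ onto the dyadic rationals, so you must check that the series value agrees with the continuous extension at irrational $x$; this is genuinely used in \cref{item:question-rational}, where the series is evaluated at a quadratic irrational, and it follows because the partial sums of the series are the values $\?(p_k/q_k)$ at the convergents, which converge to the extension's value by monotonicity and continuity. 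Second, the mediant identity $\?\bigl(\tfrac{a+c}{b+d}\bigr)=\tfrac12\bigl(\?(\tfrac{a}{b})+\?(\tfrac{c}{d})\bigr)$ is not a one-line consequence of \cref{def:question}; it needs its own induction (or can be read off from the series), although, as you note, strict monotonicity alone already follows from the analogue of \cref{lem:bounds,prop:interro-increasing}, and the mediant identity is only needed for surjectivity onto the dyadic rationals, hence onto $[0,1]$ after extension. Compared with the paper, your route costs a page of standard continued-fraction bookkeeping but makes the section self-contained; the paper's citation is the economical choice since none of these facts are new.
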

\begin{definition}
    Let $A$ be a square matrix with real entries. We say that $A$ is \dfn{diagonally dominant} if \[\abs{A_{i,i}} > \sum_{j \neq i}\abs{A_{i, j}}\] for all $i$.
\end{definition}
\begin{theorem}[{Levy--Desplanques Theorem, \cite[Corollary~5.6.17]{MR2978290}}]\label{thm:invertible-of-diagonally-dominant}
    If a matrix is diagonally dominant, then it is invertible.
\end{theorem}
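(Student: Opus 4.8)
The plan is to argue by contradiction, using the defining inequality of diagonal dominance at a single, carefully chosen coordinate. Suppose $A$ is an $n \times n$ diagonally dominant matrix that fails to be invertible. Then there is a nonzero vector $x = (x_1, \ldots, x_n)$ with $Ax = 0$. Choose an index $i$ for which $\abs{x_i}$ is as large as possible; since $x \neq 0$, we have $\abs{x_i} > 0$.

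Next I would read off the $i$-th coordinate of the equation $Ax = 0$, namely $\sum_j A_{i,j} x_j = 0$, and rewrite it as $A_{i,i} x_i = -\sum_{j \neq i} A_{i,j} x_j$. Taking absolute values and applying the triangle inequality together with the bound $\abs{x_j} \leq \abs{x_i}$ for all $j$ gives
\[
\abs{A_{i,i}}\,\abs{x_i} \;\leq\; \sum_{j \neq i} \abs{A_{i,j}}\,\abs{x_j} \;\leq\; \abs{x_i} \sum_{j \neq i} \abs{A_{i,j}}.
\]
Dividing through by $\abs{x_i} > 0$ yields $\abs{A_{i,i}} \leq \sum_{j \neq i} \abs{A_{i,j}}$, which contradicts the definition of diagonal dominance. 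Hence no such $x$ exists, and $A$ is invertible.

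There is essentially no obstacle here: the only real idea is selecting the coordinate of maximal modulus, which is exactly what converts the crude triangle-inequality estimate into the sharp bound that conflicts with strict diagonal dominance. An alternative presentation would invoke the Gershgorin circle theorem, observing that every eigenvalue of $A$ lies in a disc centered at some $A_{i,i}$ of radius $\sum_{j \neq i} \abs{A_{i,j}} < \abs{A_{i,i}}$, so no such disc contains $0$ and therefore $0$ is not an eigenvalue; but since Gershgorin's theorem is itself proved by the maximal-coordinate argument above, the direct proof is the most economical, and it is the one I would write out.
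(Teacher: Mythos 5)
Your proof is correct: the maximal-coordinate argument, using strict diagonal dominance in the row of the largest entry of a putative kernel vector, is the standard proof of the Levy--Desplanques theorem, and every step (including the division by $\abs{x_i}>0$ and the strictness of the final contradiction) is in order. The paper does not prove this statement at all --- it simply cites Horn and Johnson --- and your argument is essentially the one found there (it is the same computation that underlies the Gershgorin disc theorem), so there is nothing further to reconcile.
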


\begin{theorem}\label{thm:quadratic_to_rational} 
    Let $x \in [0, 1]$ be a quadratic irrational. Then, $\interro(x)$ is an explicitly computable rational number.
\end{theorem}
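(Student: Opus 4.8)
The plan is to reduce the computation of $\interro(x)$ to solving a finite system of linear equations over $\Q$ whose coefficient matrix is invertible by the Levy--Desplanques theorem (\cref{thm:invertible-of-diagonally-dominant}). Note first that a quadratic irrational in $[0,1]$ lies in $(0,1)$ (the points $0$, $1/2$, $1$ are rational), so throughout we work with quadratic irrationals in $(0,1)$.

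The first step is to rewrite the functional equation of \cref{thm:interro-real} so that it becomes ``uniformly contractive.'' Write $G(y)=\{1/y\}$ and $R(y)=1-y$ for $y\in(0,1)$. For $y\in(0,1/2)$ the functional equation already reads $\interro(y)=4^{-n}-2\cdot 4^{-n}\interro(G(y))$ with $n=\lfloor 1/y\rfloor\geq 2$, whose single recursive coefficient has absolute value at most $1/8$. For $y\in(1/2,1)$ the term $\interro(1-y)$ appearing in the functional equation has argument $1-y\in(0,1/2)$, so I would expand it once more using the first case; this converts the equation into $\interro(y)=c_y+\alpha_y\interro(G(y))+\beta_y\interro\bigl(G(R(y))\bigr)$ with $c_y\in\Q$ and $\abs{\alpha_y}+\abs{\beta_y}\leq \frac34+\frac{1}{16}=\frac{13}{16}<1$. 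Hence, for every quadratic irrational $y\in(0,1)$, the value $\interro(y)$ equals an explicit rational constant plus a rational combination, with coefficients of total absolute value less than $1$, of the values of $\interro$ at one or two points of the set $\mathcal{O}(x)$ obtained by closing $\{x\}$ under $G$ and $R$.

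The main step---and the main obstacle---is to prove that $\mathcal{O}(x)$ is finite when $x$ is a quadratic irrational. Since $G$ and $R$ are piecewise M\"obius transformations of determinant $\pm 1$, every element of $\mathcal{O}(x)$ is again a quadratic irrational in $(0,1)$, and a routine substitution computation shows that the discriminant $D$ of its primitive integer minimal polynomial equals $D=\mathrm{disc}(x)$ (primitivity is also preserved). Now split $\mathcal{O}(x)$ according to whether the Galois conjugate $\overline{y}$ lies outside $[0,1]$ (``good'') or inside $(0,1)$ (``bad''); note $\overline{y}\notin\{0,1\}$ since $y$ is irrational. A direct check shows $G$ and $R$ both preserve goodness, so the good elements of $\mathcal{O}(x)$ all satisfy $\mathrm{disc}=D$ and $\overline{y}\notin[0,1]$; writing the minimal polynomial as $At^2+Bt+C$ with $A>0$, and replacing $y$ by $1-y$ if $\overline{y}>1$ so that $\overline{y}<0$ (whence $C<0$), the identity $D=B^2+4A\abs{C}$ forces $A\leq D/4$, $\abs{B}\leq\sqrt{D}$, $\abs{C}\leq D/4$, so there are only finitely many good elements. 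For the bad elements, the key observation is that when $y$ is bad one has $C=A\,y\overline{y}$ with $0<y\overline{y}<1$, so $C$ is a positive integer with $C<A$; moreover $R$ fixes the leading coefficient $A$, while applying $G$ to any bad element replaces $At^2+Bt+C$ by a polynomial with leading coefficient $C$, which is a strictly smaller positive integer. Since the good elements form an absorbing set, any word in $G,R$ that reaches a bad element must have stayed bad throughout; along such a word the positive integer leading coefficient is nonincreasing and drops by at least $1$ at each $G$, so it contains at most $A(x)-1$ occurrences of $G$, and since consecutive $R$'s cancel there are only finitely many reduced such words. Thus $\mathcal{O}(x)$ is finite.

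Finally, enumerate $\mathcal{O}(x)=\{x_1,\ldots,x_k\}$ with $x_1=x$. The contractive recurrence of the second step expresses $v_j:=\interro(x_j)$ as $v_j=c_j+\sum_l M_{jl}v_l$ with $c_j,M_{jl}\in\Q$ and $\sum_l\abs{M_{jl}}<1$ for each $j$. Writing this as $(I-M)\mathbf{v}=\mathbf{c}$, the matrix $I-M$ is strictly diagonally dominant, since $\abs{1-M_{jj}}\geq 1-\abs{M_{jj}}>\sum_{l\neq j}\abs{M_{jl}}$; by \cref{thm:invertible-of-diagonally-dominant} it is invertible, so $\mathbf{v}=(I-M)^{-1}\mathbf{c}$ has rational entries, and in particular $\interro(x)=v_1\in\Q$. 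Because the continued fraction of a quadratic irrational is computable, the finite set $\mathcal{O}(x)$ and the data $M,\mathbf{c}$ are explicitly computable, and hence so is $\interro(x)$. The crux of the whole argument is the finiteness of $\mathcal{O}(x)$, and within it the decreasing–leading–coefficient argument that controls the ``bad'' elements.
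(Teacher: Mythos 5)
Your argument is correct, but it takes a genuinely different route from the paper's. The paper outsources the crucial finiteness to Minkowski's question-mark function: since $\?(x)$ is rational with some denominator $n$ (\cref{item:question-rational}), it works with the points $x_i=\?^{-1}(i/n)$ for $0\le i\le n$, and the symmetry $\?(1-x)=1-\?(x)$ together with the recurrence for $\?$ shows that this explicit finite set is closed under $x\mapsto 1-x$ and $x\mapsto\{1/x\}$; the resulting system $A\mathbf{v}=b$ is then inverted by checking that $AD$ is diagonally dominant for a suitable diagonal matrix $D$ and applying \cref{thm:invertible-of-diagonally-dominant}. You instead prove finiteness of the closure $\mathcal{O}(x)$ of $\{x\}$ under the Gauss map $G$ and $R(y)=1-y$ directly, by a reduction-theoretic argument on the primitive minimal polynomial: the discriminant is invariant, the elements whose conjugate lies outside $[0,1]$ form an absorbing class that is finite for each fixed discriminant, and on the remaining elements the positive leading coefficient is preserved by $R$ and strictly decreases at each application of $G$, which bounds the reduced words that can produce such elements; this is essentially a self-contained reproof of the Lagrange-type finiteness that the paper imports via \cref{lem:question}. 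Your second difference is the one-step expansion of $\interro(1-y)$ for $y\in(1/2,1)$, which caps every row of $M$ at $\ell^1$-norm $13/16$ and makes $I-M$ strictly diagonally dominant outright, so the appeal to the Levy--Desplanques theorem is immediate rather than the paper's ``routine check'' after rescaling by $D$. The trade-off: the paper's proof is shorter and its index set comes with the clean parametrization $x_i=\?^{-1}(i/n)$, but it depends on known properties of $\?$; yours uses no properties of $\?$ at all and makes the invertibility transparent, at the cost of carrying out the quadratic-forms orbit analysis. Both versions give explicit computability in the same sense, since the orbit, the matrix, and the constant vector are effectively computable from the minimal polynomial of $x$.
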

\begin{proof}
    By \cref{item:question-rational}, $\?(x)$ is rational. Let $n > 0$ be the denominator of $\?(x)$. For $0 \leq i \leq n$, let $x_i = \?^{-1}(i / n)$, which exists by \cref{item:question-bijection}. We know that $x \in \{x_0, \ldots, x_n\}$. Our strategy will be to construct a system of linear equations with rational coefficients that has $(\interro(x_0), \ldots, \interro(x_{n}))$ as its only solution, thus proving that $\interro(x_0), \ldots, \interro(x_{n})$ are rational and hence that $\interro(x)$ is rational.

    By \cref{item:question-symm}, we have for $0 \leq i \leq n$ that \begin{equation}\label{eq:one-minus-xi} 1 - x_i =x_{n - i}.\end{equation}
    
    By \cref{item:question-recurrence}, we have for $0 < i \leq n$ that \[\frac{i}{n} = \?(x_i) = \frac{1}{2^{\lfloor 1 / x_i\rfloor}}\left(2 - \?\left(\left\{\frac{1}{x_i}\right\}\right)\right).\]
    Rearranging this equation yields
    \[\?\left(\left\{\frac{1}{x_i}\right\}\right) = \frac{2n - 2^{\lfloor 1 / x_i\rfloor} i}{n},\] whence $0 \leq 2n - 2^{\lfloor 1 / x_i\rfloor} i \leq n$ and \begin{equation}\label{eq:frac-inv-xi}\left\{\frac{1}{x_i}\right\} = x_{2n - 2^{\lfloor 1 / x_i\rfloor} i}.\end{equation}

    Let us now use \eqref{eq:one-minus-xi} and \eqref{eq:frac-inv-xi} to find an equation for $\interro(x_i)$. We have three cases.
    \begin{itemize}
        \item If $i = 0$, then $x_i = 0$ and hence \begin{equation}\label{eq:interro-x0}\interro(x_i) = 0.\end{equation}
        \item If $0 < i \leq n / 2$, then by \cref{item:question-bijection}, we have $0 < x_i \leq \?^{-1}(1/2) = 1/2$. Therefore, by \eqref{eq:interro}, we have
        \[\interro(x_i) = \frac{1}{4^{\lfloor1 / x_i\rfloor}} \left(1 - 2 \interro\left(\left\{\frac{1}{x_i}\right\}\right)\right),\]
        which, by \eqref{eq:frac-inv-xi}, simplifies to
        \begin{equation}\label{eq:interro-xi-1}\interro(x_i) = \frac{1}{4^{\lfloor1 / x_i\rfloor}} \left(1 - 2 \interro\left(x_{2n - 2^{\lfloor 1 / x_i\rfloor} i}\right)\right).\end{equation}
        \item If $n / 2 < i \leq n$, then by \cref{item:question-bijection}, we have $1/2 < x_i \leq 1$. Therefore, by \eqref{eq:interro}, we have
        \[\interro(x_i) = \frac{3}{8} - \frac{3}{4} \interro\left(\frac{1}{x_i} - 1\right) - \frac{1}{2} \interro(1 - x_i),\] which, by \eqref{eq:one-minus-xi} and \eqref{eq:frac-inv-xi}, simplifies to
        \begin{equation}\label{eq:interro-xi-2}
            \interro(x_i) = \frac{3}{8} - \frac{3}{4} \interro\left(x_{2n - 2i}\right) - \frac{1}{2} \interro\left(x_{n - i}\right).
        \end{equation}
    \end{itemize}
    Define the matrix $A \in \Q^{(n + 1) \times (n + 1)}$ and the vector $b \in \Q^{n + 1}$ as follows. For $0 \leq i, j \leq n$, let \[A_{i, j} = \begin{cases}
        1 & \mbox{if $i = j$} \\\noalign{\vskip5pt}
        \displaystyle\frac{2}{4^{\lfloor 1 / x_i \rfloor}} & \mbox{if $0 < i \leq \displaystyle\frac{n}{2}$ and $j = 2n - 2^{\lfloor 1 / x_i\rfloor} i$} \\\noalign{\vskip5pt}
        \displaystyle\frac{3}{4} & \mbox{if $\displaystyle\frac{n}{2} < i \leq n$ and $j = 2n - 2i$} \\\noalign{\vskip5pt}
        \displaystyle\frac{1}{2} & \mbox{if $\displaystyle\frac{n}{2} < i \leq n$ and $j = n - i$} \\\noalign{\vskip5pt}
        0 & \mbox{otherwise}
    \end{cases},\]
    and let
    \[b_i = \begin{cases}
        0 & \mbox{if $i = 0$} \\\noalign{\vskip5pt}
        \displaystyle\frac{1}{4^{\lfloor 1 / x_i \rfloor}} & \mbox{if $0 < i \leq \displaystyle\frac{n}{2}$} \\\noalign{\vskip5pt}
        \displaystyle\frac{3}{8} & \mbox{if $\displaystyle\frac{n}{2} < i \leq n$}
    \end{cases}.\]
    Then, the three equations \eqref{eq:interro-x0}, \eqref{eq:interro-xi-1}, and \eqref{eq:interro-xi-2} can be rewritten in the form
    \[A \begin{bmatrix}\interro(x_0) \\ \vdots \\ \interro(x_n)\end{bmatrix} = b.\]
    Let $D$ be the $(n + 1) \times (n + 1)$ diagonal matrix with \[D_{i, i} = \begin{cases} 1 & \mbox{if $0 \leq i \leq \displaystyle \frac{n}{2}$} \\\noalign{\vskip5pt}
    4 & \mbox{if $\displaystyle \frac{n}{2} < i \leq n$}
    \end{cases}.\]
    It can be routinely checked that $AD$ is diagonally dominant. By \cref{thm:invertible-of-diagonally-dominant}, $AD$ is invertible, so $A$ is invertible. It follows that \[\begin{bmatrix}\interro(x_0) \\ \vdots \\ \interro(x_n)\end{bmatrix} = A^{-1}b,\] so $\interro(x_0), \ldots, \interro(x_n)$ are all rational. Since $x \in \{x_0, \ldots, x_n\}$, the result follows.
\end{proof}

\section{The limiting distribution for \texorpdfstring{$PGL_2(\Z)$}{PGL₂(ℤ)}}\label{sec:pgl2-limit}
We are now ready to describe the limiting behavior of the reduced random walk in $PGL_2(\Z)$. A plot of the cumulative distribution function $F_\zeta$ is shown in \cref{fig:cdf_plot}. 

\begin{theorem}\label{thm:pgl2-cumulative}
    Let $W \simeq PGL_2(\Z)$ be the Coxeter group generated by the set $S = \{s_1, s_2, s_3\}$ of simple reflections, where $s_1, s_2, s_3$ are the following transformations of the upper half-plane $\HH$:
    \begin{align*}
    s_1(z) &= -1 - \overline{z} \\
    s_2(z) &= 1/\overline{z} \\
    s_3(z) &= -\overline{z}.
    \end{align*}
    Consider the sequence $(z_m)_{m \geq 0}$, which was defined in \cref{subsec:oneway} as a geometric interpretation of the reduced random walk in $(W, S)$. The limit $\zeta = \lim_{m \to \infty} z_m$ almost surely exists and is a real number. The cumulative distribution function $F_\zeta$ is given by
    \begin{equation}\label{eq:cumulative}F_\zeta(x) = \Pr[\zeta \leq x] = \begin{cases}
        \interro(-1/x) & \mbox{if $x \leq -2$} \\
        1/4 - \interro(-x-1)/2 & \mbox{if $-2 \leq x \leq -1$} \\
        1/4 + \interro(-1-1/x)/2 + \interro(x + 1)/2 & \mbox{if $-1 \leq x \leq -1/2$} \\
        1/2 - \interro(-x)/2 & \mbox{if $-1/2 \leq x \leq 0$} \\
        1/2 + \interro(x) & \mbox{if $0 \leq x \leq 1$} \\
        7/8 + \interro(x-1)/4 & \mbox{if $1 \leq x \leq 2$} \\
        1 - \interro(1/x)/2 & \mbox{if $2 \leq x$}
    \end{cases}\end{equation}
    for all $x \in \R$, where $\interro$ is the interrobang function (\cref{thm:interro-real}). In particular,
    \begin{enumerate}[(i)]
        \item \label{item:continuous-and-strictly-increasing} $F_\zeta$ is continuous and strictly increasing;
        \item \label{item:dyadic-rational} for all $x \in \Q$, we have that $F_\zeta(x)$ is an explicitly computable dyadic rational number;
        \item \label{item:rational} for all quadratic irrationals $x$, we have that $F_\zeta(x)$ is an explicitly computable rational number.
    \end{enumerate}
\end{theorem}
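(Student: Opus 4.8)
The plan is to produce, directly from formula \eqref{eq:cumulative}, a probability measure $\mu_0$ on $\overline{\R}$ satisfying the fixed-point equation of \cref{thm:key}, and then to conclude $\mu_0 = \mu$ via the uniqueness in \cref{thm:uniqueness}. Write $F$ for the right-hand side of \eqref{eq:cumulative}. First I would check that $F$ is the cumulative distribution function of an atomless probability measure $\mu_0$ on $\overline{\R}$ with $\mu_0(\{\infty\}) = 0$: each of the seven branches is continuous and strictly increasing (using that $\interro$ is strictly increasing by \cref{thm:interro-real} and that the inner substitutions $-1/x$, $-x-1$, $-1-1/x$, $x+1$, $-x$, $x$, $x-1$, $1/x$ are monotone on the relevant intervals), the branches agree at the six breakpoints $-2, -1, -1/2, 0, 1, 2$ (a short computation using $\interro(0) = 0$, $\interro(1/2) = 1/16$, and $\interro(1) = 3/8$ from \cref{lem:values}), and $\lim_{x \to -\infty} F(x) = \interro(0) = 0$ while $\lim_{x \to +\infty} F(x) = 1 - \tfrac12\interro(0) = 1$. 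These facts also give \cref{thm:pgl2-cumulative}(i).

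The heart of the proof is to verify that $\mu_0 = \tfrac13\big((\tau_1)_*\mu_0 + (\tau_2)_*\mu_0 + (\tau_3)_*\mu_0\big)$; by \cref{thm:key,thm:uniqueness} this forces $\mu_0 = \mu$, hence $\Pr[\zeta \in \R] = 1$ and $F_\zeta = F$. Since two probability measures on $\overline{\R}$ that agree on all arcs $(-\infty, x]$, $x \in \R$, are equal, it suffices to prove
\[F(x) = \tfrac13\sum_{i = 1}^{3}\mu_0\!\left(\tau_i^{-1}\big((-\infty, x]\big)\right)\qquad\text{for all } x \in \R.\]
For this I would first record the action of the one-way reflections on $\overline{\R}$, read off from the generators in \cref{sec:pgl2}: $\tau_1$ fixes $(-\infty, -1/2] \cup \{\infty\}$ and sends each $y > -1/2$ to $-1 - y$; $\tau_3(y) = \abs{y}$ on $\R$ and fixes $\infty$; and $\tau_2$ fixes $[-1, 1]$, sends each $y$ with $\abs{y} > 1$ to $1/y$, and sends $\infty$ to $0$. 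Each $\tau_i$ is a ``fold,'' so each preimage $\tau_i^{-1}((-\infty, x])$ is a union of at most two intervals, and since $\mu_0$ is atomless its measure is an explicit affine expression in $F$ evaluated at $x$ and at one of $-1 - x$, $-x$, $1/x$. Substituting these expressions and running over the seven ranges of $x$ cut out by the breakpoints --- with a handful of further subdivisions (such as $x \in [0, 1/2]$ versus $[1/2, 1]$, and $x \le -3$ versus $-3 \le x \le -2$) according to which branch of \eqref{eq:cumulative} the reflected point falls into --- each resulting identity reduces either to an algebraic identity among the branch formulas (using only $\interro(1-(1-x)) = \interro(x)$ and the like) or to an instance of the functional equation \eqref{eq:interro} for $\interro$, which holds on all of $[0,1]$ by \cref{thm:interro-real}. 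Concretely, the ``$0 < x \le 1/2$'' case of \eqref{eq:interro}, together with the observation that $1/u = 1 + 1/v$ when $v = u/(1-u)$ (so $\lfloor 1/u \rfloor = 1 + \lfloor 1/v \rfloor$ and $\{1/u\} = \{1/v\}$), handles the outer tails $\abs{x} \ge 1$, while the ``$1/2 < x \le 1$'' case handles the range $1/2 \le x \le 1$.

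Finally, parts (ii) and (iii) are read off once $F_\zeta = F$ is known. For (ii), if $x \in \Q$ then in every branch the argument fed to $\interro$ is a rational number in $[0, 1]$, so \cref{prop:rational_to_dyadic} makes its $\interro$-value an explicitly computable dyadic rational, and the coefficients $1, 1/2, 1/4$ and the constants $1/4, 1/2, 7/8, 1$ appearing in \eqref{eq:cumulative} are themselves dyadic, whence $F_\zeta(x)$ is dyadic; for (iii), the same argument works with \cref{thm:quadratic_to_rational} in place of \cref{prop:rational_to_dyadic}, once one notes that each of the arguments $-1/x$, $-x-1$, $-1-1/x$, $x+1$, $-x$, $x$, $x-1$, $1/x$ lies in $[0,1]$ and is again a quadratic irrational whenever $x$ is. The main obstacle is the bookkeeping in the middle step: one must treat the point $\infty$ and the non-injectivity of the $\tau_i$ carefully (the equality $\mu_0(\{\infty\}) = 0$ is what keeps $\infty$ from contributing to the pushforwards), and one must correctly identify, in each of the many sub-cases, which branch of \eqref{eq:cumulative} governs the reflected point, so that the algebra collapses to precisely the three-case recursion defining $\interro$.
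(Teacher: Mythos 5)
Your proposal is correct and follows essentially the same route as the paper: take the right-hand side of \eqref{eq:cumulative} as the candidate CDF, check (via \cref{thm:interro-real}, \cref{lem:values}, \cref{prop:rational_to_dyadic}, \cref{thm:quadratic_to_rational}) that it defines a continuous, strictly increasing distribution function with the stated arithmetic properties, verify by casework with the explicit one-way reflections that the associated measure satisfies the stationarity equation \eqref{eq:stationary}, and invoke \cref{thm:key,thm:uniqueness} to conclude it equals the law of $\zeta$. The paper likewise treats the fixed-point verification as routine casework (illustrating only $x \in [-3,-2]$), so your outline matches its proof in both structure and substance.
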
 

\begin{proof}
Let $f(x)$ denote the right-hand side of \eqref{eq:cumulative}. It can be routinely checked using \cref{lem:values} that $f$ is well defined at the boundary points $-2, -1, -1/2, 0, 1, 2$. By \cref{thm:interro-real}, $f$ is continuous and strictly increasing on each of the intervals $(-\infty, -2]$, $[-2, -1]$, $[-1, -1/2]$, $[-1/2, 0]$, $[0, 1]$, $[1, 2]$, $[2, \infty)$, so it is continuous and strictly increasing on $\R$. By \cref{prop:rational_to_dyadic}, for all $x \in \Q$, we have that $f(x)$ is an explicitly computable dyadic rational number. By \cref{thm:quadratic_to_rational}, for all quadratic irrationals $x$, we have that $f(x)$ is an explicitly computable rational number.

It remains to prove that $f = F_\zeta$. Since $f$ is continuous and strictly increasing, and since we may easily compute that $\lim_{x \to -\infty} f(x) = 0$ and $\lim_{x \to \infty} f(x) = 1$, there exists a probability measure $\mu'$ on $\R$ with cumulative distribution function $f$. By \cref{thm:key,thm:uniqueness}, it suffices to show that $\mu'$ satisfies the equation
\begin{equation}\label{eq:mu'-stationary}\mu' = \frac{1}{3}\left((\tau_1)_*\mu' + (\tau_2)_*\mu' + (\tau_3)_*\mu'\right),\end{equation} where $\tau_1, \tau_2, \tau_3 \colon \overline{\HH} \to \overline{\HH}$ are the one-way reflections of $W$. Explicitly, for $x \in \R$, we have
\begin{alignat*}{2}
    \tau_1(x) &= \begin{cases} -1 - x & \mbox{if $x > -1/2$} \\ x & \mbox{otherwise}\end{cases} &&= -\frac{1}{2} - \abs{x + \frac{1}{2}}\\
    \tau_2(x) &= \begin{cases} 1/x & \mbox{if $|x| > 1$} \\ x & \mbox{otherwise}\end{cases}\\    \tau_3(x) &= \begin{cases} - x & \mbox{if $x < 0$} \\ x & \mbox{otherwise}\end{cases} &&= \abs{x}.    
\end{alignat*}
Verifying \eqref{eq:mu'-stationary} is routine; the method is to use \cref{lem:piecewise} to show that for all $x \in \Q$, both sides of \eqref{eq:mu'-stationary} assign the same measure to the set $(-\infty, x]$. The computation involves casework depending on where $x$ falls among finitely many intervals. We illustrate the case $x \in [-3, -2]$ and omit the details for the other cases.

Suppose that $x \in [-3, -2]$. We have \begin{align*}
    &\phantom{{}={}}\frac{1}{3}\left((\tau_1)_*\mu' + (\tau_2)_*\mu' + (\tau_3)_*\mu'\right)((-\infty,x]) \\ 
    &=    \frac{1}{3}\left(\mu'(\tau_1^{-1}((-\infty,x])) + \mu'(\tau_2^{-1}((-\infty,x]))+ \mu'(\tau_3^{-1}((-\infty,x]))\right) \\ 
    &=\frac{1}{3}\left(\mu'([-1-x,\infty)\cup(-\infty,x]) + \mu'(\emptyset)+ \mu'(\emptyset)\right) \\ 
    &=\frac{1}{3}\left(1-f(-1-x)+f(x)\right) \\ 
    &=\frac{1}{3}\left(1-\left(\frac{7}{8}+\frac{1}{4}\interro(-2-x)\right)+\interro\left(-\frac{1}{x}\right)\right) \\ 
    &=\frac{1}{24}-\frac{1}{12}\interro(-2-x)+\frac{1}{3}\interro\left(-\frac{1}{x}\right).     
\end{align*}
Since $1/3\leq -1/x\leq 1/2$, we can apply \cref{lem:piecewise} with $n=2$ to find that 
\begin{align*}
    &\phantom{{}={}}\frac{1}{24}-\frac{1}{12}\interro(-2-x)+\frac{1}{3}\interro\left(-\frac{1}{x}\right)\\
    &=\frac{1}{24}-\frac{1}{12}\interro(-2-x)+\frac{1}{3}\cdot\frac{1}{16}\left(1-2\interro\left(-2-x\right)\right) \\ 
    &= \frac{1}{16}\left(1-2\interro\left(-2-x\right)\right) \\
    &= \interro\left(-\frac{1}{x}\right) \\
    &= f(x) \\
    &= \mu'((-\infty, x]),
\end{align*}
as desired.
\end{proof}

\begin{figure}
    \begin{center}
        \includegraphics[width=
        \linewidth]{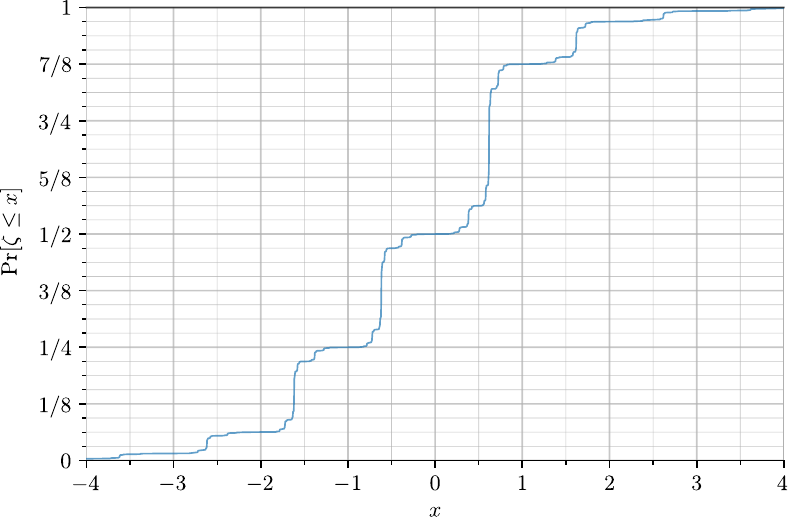}
    \end{center}
    \caption{A plot of the cumulative distribution function $F_\zeta$.}\label{fig:cdf_plot} 
\end{figure}

\section*{Acknowledgments}
The authors thank Amol Aggarwal for communicating part of the argument that we used to prove \cref{thm:uniqueness}. 
Colin Defant was supported by the National Science Foundation under Award No.\ 2201907 and by a Benjamin Peirce Fellowship at Harvard University. 

\bibliographystyle{plain}
\bibliography{main}

\begin{thebibliography}{10}

\bibitem{AALP}
Erik Aas, Arvind Ayyer, Svante Linusson, and Samu Potka.
\newblock Limiting directions for random walks in classical affine {W}eyl groups.
\newblock {\em Int. Math. Res. Not. IMRN}, (4):3092--3137, 2023.

\bibitem{MR2161463}
James~W. Anderson.
\newblock {\em Hyperbolic geometry}.
\newblock Springer Undergraduate Mathematics Series. Springer-Verlag London, Ltd., London, second edition, 2005.

\bibitem{AHR}
Omer Angel, Alexander Holroyd, and Dan Romik.
\newblock The oriented swap process.
\newblock {\em Ann. Probab.}, 37(5):1970--1998, 2009.

\bibitem{AL}
Arvind Ayyer and Svante Linusson.
\newblock Correlations in the multispecies {TASEP} and a conjecture by {L}am.
\newblock {\em Trans. Amer. Math. Soc.}, 369(2):1097--1125, 2017.

\bibitem{BL}
Jinho Baik and Zhipeng Liu.
\newblock T{ASEP} on a ring in sub-relaxation time scale.
\newblock {\em J. Stat. Phys.}, 165(6):1051--1085, 2016.

\bibitem{BL3}
Jinho Baik and Zhipeng Liu.
\newblock Fluctuations of {TASEP} on a ring in relaxation time scale.
\newblock {\em Comm. Pure Appl. Math.}, 71(4):747--813, 2018.

\bibitem{BL2}
Jinho Baik and Zhipeng Liu.
\newblock Multipoint distribution of periodic {TASEP}.
\newblock {\em J. Amer. Math. Soc.}, 32(3):609--674, 2019.

\bibitem{BCFGR}
Elia Bisi, Fabio~Deelan Cunden, Shane Gibbons, and Dan Romik.
\newblock The oriented swap process and last passage percolation.
\newblock {\em Random Structures Algorithms}, 60(4):690--715, 2022.

\bibitem{MR2133266}
Anders Bj\"{o}rner and Francesco Brenti.
\newblock {\em Combinatorics of {C}oxeter groups}, volume 231 of {\em Graduate Texts in Mathematics}.
\newblock Springer, New York, 2005.

\bibitem{MR2267655}
V.~I. Bogachev.
\newblock {\em Measure theory. {V}ol. {I}, {II}}.
\newblock Springer-Verlag, Berlin, 2007.

\bibitem{BGR}
Alexey Bufetov, Vadim Gorin, and Dan Romik.
\newblock Absorbing time asymptotics in the oriented swap process.
\newblock {\em Ann. Appl. Probab.}, 32(2):753--763, 2022.

\bibitem{CMW}
Sylvie Corteel, Olya Mandelshtam, and Lauren Williams.
\newblock From multiline queues to {M}acdonald polynomials via the exclusion process.
\newblock {\em Amer. J. Math.}, 144(2):395--436, 2022.

\bibitem{DefantStoned}
Colin Defant.
\newblock Random combinatorial billiards and stoned exclusion processes, ar{X}iv:2406.07858, 2024.

\bibitem{MR2825515}
A.~A. Dushistova and N.~G. Moshchevitin.
\newblock On the derivative of the {M}inkowski {$?(x)$} function.
\newblock {\em Fundam. Prikl. Mat.}, 16(6):33--44, 2010.

\bibitem{EFM}
Martin~R. Evans, Pablo~A. Ferrari, and Kirone Mallick.
\newblock Matrix representation of the stationary measure for the multispecies {TASEP}.
\newblock {\em J. Stat. Phys.}, 135(2):217--239, 2009.

\bibitem{FM}
Pablo~A. Ferrari and James~B. Martin.
\newblock Stationary distributions of multi-type totally asymmetric exclusion processes.
\newblock {\em Ann. Probab.}, 35(3):807--832, 2007.

\bibitem{FT}
Behrang Forghani and Giulio Tiozzo.
\newblock Random walks of infinite moment on free semigroups.
\newblock {\em Probab. Theory Related Fields}, 175(3-4):1099--1122, 2019.

\bibitem{GK}
Robert~D. Gray and Mark Kambites.
\newblock On cogrowth, amenability, and the spectral radius of a random walk on a semigroup.
\newblock {\em Int. Math. Res. Not. IMRN}, (12):3753--3793, 2020.

\bibitem{HognasMukherjea}
G\"oran H\"ogn\"as and Arunava Mukherjea.
\newblock {\em Probability measures on semigroups}.
\newblock Probability and its Applications (New York). Springer, New York, second edition, 2011.
\newblock Convolution products, random walks, and random matrices.

\bibitem{MR2978290}
Roger~A. Horn and Charles~R. Johnson.
\newblock {\em Matrix analysis}.
\newblock Cambridge University Press, Cambridge, second edition, 2013.

\bibitem{Lalley}
Steven~P. Lalley.
\newblock {\em Random walks on infinite groups}, volume 297 of {\em Graduate Texts in Mathematics}.
\newblock Springer, Cham, [2023] \copyright 2023.

\bibitem{Lam}
Thomas Lam.
\newblock The shape of a random affine {W}eyl group element and random core partitions.
\newblock {\em Ann. Probab.}, 43(4):1643--1662, 2015.

\bibitem{LW}
Thomas Lam and Lauren Williams.
\newblock A {M}arkov chain on the symmetric group that is {S}chubert positive?
\newblock {\em Exp. Math.}, 21(2):189--192, 2012.

\bibitem{MR3866617}
G\'erard Letac and Mauro Piccioni.
\newblock Random walks in the hyperbolic plane and the {M}inkowski question mark function.
\newblock {\em J. Theoret. Probab.}, 31(4):2376--2389, 2018.

\bibitem{MR352287}
Wilhelm Magnus.
\newblock {\em Noneuclidean tesselations and their groups}, volume Vol. 61 of {\em Pure and Applied Mathematics}.
\newblock Academic Press [Harcourt Brace Jovanovich, Publishers], New York-London, 1974.

\bibitem{MR3849626}
Joseph Maher and Giulio Tiozzo.
\newblock Random walks on weakly hyperbolic groups.
\newblock {\em J. Reine Angew. Math.}, 742:187--239, 2018.

\bibitem{MR2302729}
Jean Mairesse and Fr\'ed\'eric Math\'eus.
\newblock Random walks on free products of cyclic groups.
\newblock {\em J. Lond. Math. Soc. (2)}, 75(1):47--66, 2007.

\bibitem{MR183818}
Hideya Matsumoto.
\newblock G\'en\'erateurs et relations des groupes de {W}eyl g\'en\'eralis\'es.
\newblock {\em C. R. Acad. Sci. Paris}, 258:3419--3422, 1964.

\bibitem{MR7929}
R.~Salem.
\newblock On some singular monotonic functions which are strictly increasing.
\newblock {\em Trans. Amer. Math. Soc.}, 53:427--439, 1943.

\bibitem{Spitzer}
Frank Spitzer.
\newblock Interaction of {M}arkov processes.
\newblock {\em Advances in Math.}, 5:246--290, 1970.

\bibitem{LingfuZhang}
Lingfu Zhang.
\newblock Shift-invariance of the colored {TASEP} and finishing times of the oriented swap process.
\newblock {\em Adv. Math.}, 415:Paper No. 108884, 60, 2023.

\end{thebibliography}
\end{document}